\newcounter{tmp}
\numberwithin{equation}{section}
\theoremstyle{plain}
\newtheorem{theorem}{Theorem}[section]
\newtheorem{lemma}[theorem]{Lemma}
\newtheorem{corollary}[theorem]{Corollary}
\newtheorem{proposition}[theorem]{Proposition}
\theoremstyle{definition}
\newtheorem{definition}[theorem]{Definition}
\newtheorem{example}[theorem]{Example}
\theoremstyle{remark}
\newtheorem{remark}[theorem]{Remark}
\newcommand{\cA}{\mathcal{A}}
\newcommand{\cE}{\mathcal{E}}
\newcommand{\cF}{\mathcal{F}}
\newcommand{\cG}{\mathcal{G}}
\newcommand{\cI}{I}
\newcommand{\cK}{\mathcal{K}}
\newcommand{\cU}{\mathcal{U}}
\newcommand{\cV}{\mathcal{V}}
\newcommand{\supp}{{\rm supp\,}}
\newcommand{\spa}{{\rm span\,}}
\newcommand{\leftrightharpoonup}{%
  \mathrel{\mathpalette\lrhup\relax}%
}
\newcommand{\lrhup}[2]{%
  \ooalign{$#1\leftharpoonup$\cr$#1\rightharpoonup$\cr}%
}
\newcommand\RSloop{\@ifnextchar\bgroup\RSloopa\RSloopb}
\newcommand\RSloopa[1]{\bgroup\RSloop#1\relax\egroup\RSloop}
\newcommand\RSloopb[1]%
\newcommand\X{0}
\newcommand\RS[1]%
\newcommand\RSdef[1]{\expandafter\def\csname RS:#1\endcsname}
\newlength\RSu
\newcommand{\parallelsum}{\mathbin{\!/\mkern-5mu/\!}}
\begin{document}
\title
  {First order endotactic reaction networks}
\author{Chuang Xu}\thanks{Department of Mathematics, University of Hawai'i at M\={a}noa, Honolulu, Hawai'i, 96822, USA. Email address: chuangxu@hawaii.edu. This work is supported by a start-up grant from the University of Hawai'i at M\={a}noa and a Travel Support for Mathematicians from the Simons Foundation (MP-TSM-00002379).}

\subjclass[2020]{34A30, 34D23, 92C40}

\noindent

\begin{abstract}
Reaction networks are a general framework widely used in modeling diverse phenomena in different science disciplines. The dynamical process of a reaction network endowed with \emph{mass-action} kinetics is a mass-action system as an ODE defined by a directed graph, the so-called ``reaction graph''. Endotacticity is a graph property used to study persistence and permanence of mass-action systems. In this paper, we provide a detailed characterization of first order endotactic reaction graphs. Besides, we provide a sufficient condition for endotacticity of reaction networks which are \emph{not} necessarily of first order. 

Such a characterization of a first order endotactic reaction graph yields the spectral property of the adjacency matrix of the reaction graph. As a consequence, we prove that every first order \emph{endotactic} mass-action system as a linear ODE has a weakly reversible deficiency zero realization, and has a unique equilibrium which is exponentially globally asymptotically stable (and is positive) in each (positive) stoichiometric compatibility class. Using a stability result for \emph{asymptotically autonomous differential equations}, examples are constructed to illustrate that the global stability results can be extended to mass-action systems of higher order reaction networks modeled by \emph{nonlinear} ODEs, which are \emph{not} necessarily endotactic. 
{Different from the classical approaches for proving global asymptotic stability, the proof does not rely on the construction of a Lyapunov function.} 
This paper may serve as a starting point of characterizing endotactic reaction graphs of higher orders and studying global stability of mass-action systems in general.
\end{abstract}

\keywords{Global asymptotic stability; mass-action system; endotactic reaction network; strongly endotactic reaction network; weakly reversible reaction network; asymptotically autonomous differential equations}

\maketitle

\section{Introduction}

\begingroup
\setcounter{tmp}{\value{theorem}}% store current value of theorem counter
\setcounter{theorem}{0} %assign desired value to theorem counter
\renewcommand\thetheorem{\Alph{theorem}}% locally redefine the representation of the theorem counter

{\emph{\textbf{Given an autonomous ordinary differential equation (ODE) defined by a directed graph, when does the ODE admit a globally asymptotically stable equilibrium?}} {This general question is motivated by a well-known Global Attractor Conjecture in Chemical Reaction Network Theory (CRNT). This paper identifies a wide class of \emph{linear} autonomous ODEs which admits a globally asymptotically stable equilibrium, and the associated directed graph is the so-called \emph{first order 
endotactic reaction graph}. Instead of relying on the standard approach of constructing Lyapunov functions but rather by characterizing the spectral property of first order endotactic reaction graphs purely on the graph property\---\emph{endotacticity} and \emph{first order}, without knowing apriori the weights of the graph, we are able to prove that the linear ODEs admit a globally asymptotically stable equilibrium.}

 Reaction networks are widely used as a modeling regime in diverse science fields, including molecular biology \cite{G03}, computer science \cite{SCWB08},  genetics \cite{B45}, game theory \cite{VRDF14}, and social sciences \cite{DW05}. CRNT has become a live research area on the study of reaction networks from different perspectives, since the pioneering works on the mathematics of reaction networks by Feinberg, Horn, Jackson et al \cite{F72,H72,HJ72,CF05}. 
 
 A reaction network consists of a triple set, of \emph{species}, \emph{complexes} and \emph{reactions}, and can be represented by a directed graph, called the ``reaction graph''  (as an embedded graph in a Euclidean space).} 
{When species of a reaction network are \emph{in abundance}, instead of counting the number of molecules, one considers the \emph{concentration} of species as a \emph{mean field approximation} of the fraction of species counts over a diverging volume  \cite{K70,K71}, and the evolution of concentration of species is governed by an ODE \cite{F19}. A reaction network modeled by such an ODE is called a \emph{(deterministic) (reaction) system}.} Below we provide a brief review of the Global Attractor Conjecture.
\smallskip

\noindent\textbf{Review of GAC}. {A biochemically interesting class of reaction systems are \emph{complex-balanced} systems  \cite{F19} (see \eqref{eq:complex-balancing} in Section~\ref{sect:GAS} for its definition). It is known that a complex-balanced \emph{mass-action} system (see Section~\ref{subsect:kinetics} for the definition of mass-action) has a unique positive equilibrium in each \emph{positive stoichiometric compatibility class}  (roughly speaking, an open forward invariant subset of the ODE; see Section~\ref{sect:RN} for its definition) \cite{F87,G03}. It was conjectured that
\emph{the unique positive equilibrium (the so-called ``Birch point'' \cite{CDSS09}) of a complex-balanced mass-action system is globally attractive  in  each positive stoichiometric compatibility class}. This conjecture is referred to as the {Global Attractor Conjecture} (GAC) \cite{HJ72}. In the light of that the $\omega$-limit set of a complex-balanced mass-action system consists of only equilibria \cite{F87,SM00,S01} which implies all trajectories of the mass-action system are bounded, GAC can be rephrased as \emph{every complex-balanced mass-action system is persistent} \cite{CNP13,GMS14}, which means all trajectories of the system keep a \emph{non-vanishing distance} from the boundary of the positive cone $\mathbb{R}^d_+$ of the Euclidean space.}

{To study this conjecture, 
Craciun, Nazarov, and Pantea introduced a concept for reaction graphs, called ``endotacticity'' \cite{CNP13} (see Section~\ref{sect:endotacticity} for its definition); they proposed a \emph{permanence conjecture}\footnote{The permanence conjecture states that \emph{every endotactic mass-action system is permanent} For the definition of permanence, see Definition~\ref{def:persistence+boundedness+permanence}.}
 for endotactic reaction systems which yields GAC. In this way, the original conjecture was put in a broader context, which makes it possible to   prove the conjecture from an alternative perspective than constructing a Lyapunov function for the global stability of the Birch point \footnote{Note that existence of a  Lyapunov function for \emph{local} stability of the Birch point is known \cite{F79}}.}

{With the invention of this concept, by constructing piecewise linear Lyapunov functions to show the existence of a compact global attractor in the interior of the positive cone $\mathbb{R}^d_+$, it is proved that \emph{two-species endotactic mass-action  systems}\footnote{It is noteworthy that the result is proved for a larger class of systems\---the so-called $\kappa$-variable systems where the reaction rate constants are replaced by positive time-varying functions (the so-called ``tempering'' in \cite{GMS14}) with a compact interval range as a subset of $\mathbb{R}_{++}$.}  \emph{are permanent} \cite{CNP13} in 2013. 
The construction of such a  piecewise linear Lyapunov function seems to rely on a deep understanding of endotacticity of the reaction graph; or the other way round, ``endotacticity'' might be born as a generic requirement to be fulfilled in order for a piecewise linear function to be a Lyapunov function for general (two-species) reaction systems.  

Subsequently, by novelly introducing into reaction networks an adapted concept called ``framed jets'' \cite{MP08} from combinatoric geometry, Gopalkrishnan, Miller and Shiu \cite{GMS14} in 2014 provided an equivalent definition of ``endotacticity'' \cite[Lemma~6.22]{GMS14} and proved the permanence conjecture for a subset of endotactic mass-action systems which are \emph{not} limited to two dimensions\---the so-called ``strongly endotactic'' mass-action reaction systems. A crucial step is to prove that the well-known pseudo-Helmholtz engergy type local Lyapunov function for complex-balanced mas-action systems is one for the existence of a compact global attractor away from the boundary \cite[Theorem~7.5]{GMS14}. Both milestone works \cite{CNP13,GMS14} justify that the endotacticity (or strong endotacticity) of the reaction graph implies the  \emph{dissipativity} of the reaction system, in respective contexts. To the best knowledge of the author, there seems to have been no analogous (and rigorous) systematic results since then. For more results on the topic of the permanence conjecture and the GAC, the interested reader may refer to \cite{H74,DAS07,A08,G09,CDSS09,AS10,SJ11,A11a,A11a,P12,CNP13,GMS14,F19}.} 
\smallskip

\noindent\textbf{Main Results.} This paper establishes the following main results: (I) \textbf{Theorem~\ref{thm:A-endotactic=endotactic}}, (II) \textbf{Theorem~\ref{thm:endotactic=weakly-reversible}},  (III) \textbf{Theorem~\ref{thm:deterministic-stability-endotactic}}, and (IV) \textbf{Theorem~\ref{thm:implication-among-three-RNs}}. In particular, we \emph{confirm the permanence conjecture for first order reaction systems} by establishing a \emph{characterization} of the associated reaction graphs.  Out of independent interests, this  characterization of first order endotactic reaction graphs a starting point tries to pave a way to understand the following general question: 
\smallskip

\noindent \emph{\textbf{How much different is endotacticity from strong endotacticity for reaction graphs? In other words, what is their difference set?}}
\smallskip

{For first order reaction graphs, we show that (I-1) \emph{the difference set consists of reaction graphs as a joint of a strongly endotactic reaction graph and a non-empty weakly reversible reaction graph sharing no common species} (Theorem~\ref{thm:A-endotactic=endotactic}).}

{The understanding of the above question in general contexts (not confined to first order reaction graphs) would help to address the permanence conjecture, based on permanence of  strongly endotactic reaction systems already  established in \cite{GMS14} as well as Theorem~\ref{thm:deterministic-stability-endotactic} in this paper:} To close the permanence conjecture, it suffices to study  \emph{nonlinear endotactic but not strongly endotactic mass-action systems of more than two-species}.

To reveal the possible rich dynamics of endotactic reaction systems (and hence the non-triviality of proving the permanence conjecture), it has been discovered that strongly endotactic mass-action systems can admit infinitely many equilibria \cite{KD23}. Hence it might also be appealing to address the question: 
\smallskip

\noindent\emph{\textbf{What (strongly) endotactic mass-action reaction systems have a unique globally stable equilibrium?}}
\smallskip

Besides, we  prove that (I-2) \emph{endotacticity of a given reaction graph can be verified by  \emph{finite} parallel sweep tests} (Theorem~\ref{thm:A-endotactic=endotactic}). It is noteworthy that such desirable finiteness is only proved for \emph{planar} reaction graphs \cite[Proposition~4.1]{CNP13}, and there seems to lack a rigorous proof in higher (than two) dimensions. 

Apart from the  parallel sweep test, we show that (II) \emph{for first order reaction graphs, endotacticity can also be verified by checking \emph{weak reversibility}} (see Section~\ref{sect:RN} for its definition)
\emph{of an auxiliary first order reaction graph} (Theorem~\ref{thm:endotactic=weakly-reversible}). In other words, \emph{every  first order endotactic reaction graph has a weakly reversible deficiency zero (WRDZ) realization}.  \emph{It remains unknown \textbf{if a weakly reversible realization exists for general  endotactic reaction graphs.}} We leave it for future work.

{As a consequence of the two main results Theorem~\ref{thm:A-endotactic=endotactic} and Theorem~\ref{thm:endotactic=weakly-reversible}, we also prove that \emph{for every  first order endotactic  mass-action system,  \emph{there exists a unique equilibrium   (with an explicit representation) within each stoichiometric compatibility class}}  (Theorem~\ref{thm:exisitence-of-positive-equilibrium}), and (III) \emph{the equilibrium is (positive and) globally asymptotically stable in the (positive) stoichiometric compatibility class} (Theorem~\ref{thm:deterministic-stability-endotactic}),} which yields  the permanence conjecture for first order reaction systems.

Out of independent interest, we also establish (IV) \emph{a necessary and sufficient condition for endotacticity of a reaction graph} (Theorem~\ref{thm:implication-among-three-RNs}), which is \emph{not} limited to first order reaction graphs.

\emph{Therefore the above highlighted questions may jointly make a road map to a better  understanding of endotactic reaction graphs in general and hence contributing to the permanence conjecture.} 
\smallskip

\noindent\textbf{Nontriviality of The Main Results.} Below we provide several simple examples to illustrate the non-triviality of the main results. Despite first order reaction graphs appear simple, it seems not completely trivial to determine if such a given reaction graph is endotactic.

\begin{example}
 Consider the following two reaction graphs
\begin{center}\begin{tikzcd}[row sep=1em, column sep = 1em]
\cG_1\colon 0\rar{}&S_1\rar{}&S_2\arrow[bend right=15]{ll}{}&S_3\lar \arrow[red,bend left=15]{ll}{} & \cG_2\colon  0\rar{}&S_1\rar{} \arrow[red,bend right=15]{rr}{}&S_2\arrow[bend right=15]{ll}{}&S_3\lar 
\end{tikzcd}
\end{center}
Since $\cG_2$ is weakly reversible, it is endotactic \cite{CNP13} (see also Corollary~\ref{cor:simplified-criterion-for-endotacticity}). In contrast, the finite sweep test \cite[Proposition~4.1]{CNP13} does \emph{not} apply to $\cG_1$ since it is \emph{three}-dimensional. Despite these two reaction graphs only differ by one edge (colored in red), it turns out that $\cG_1$ is \emph{not} endotactic by Theorem~\ref{thm:endotactic=weakly-reversible} (with $(\cG_1)^{\spadesuit} = \cG_1$; see Section~\ref{sect:endotacticity-first-order} for the meaning of the notation $(\cG_1)^{\spadesuit}$). 
\end{example}
Although for linear ODEs, the  spectral property of the adjacency matrix is sufficient to determine the unique existence of the global asymptotically stable equilibrium,  it turns out that in the context of first order endotactic reaction systems, the adjacency matrix may \emph{not} be Hurwitz stable. 
\begin{example}\label{ex:Introductory-example}
Consider the following reaction network $\cG$ of species $S_i$ for $i=1,\ldots,5$
\begin{center}\begin{tikzcd}[row sep=1em, column sep = 1em]
\cG\colon S_2\rar{}&S_1\rar{}&0\rar{}&S_1+S_2&S_3\rar{}&S_4\rar{}&S_5\arrow[bend left=15]{ll}{}
\end{tikzcd}
\end{center}
{Note that  $\cG$ is neither weakly reversible nor strongly endotactic;  indeed, $\cG$ is \emph{not} $(0,1,2,2,2)$-strongly endotactic (see Section~\ref{sect:endotacticity} for its definition). 
Straightforward yet tedious calculation shows that regardless of the positive reaction rate constants, the coefficient matrix is singular and that there exists a unique positive equilibrium in each positive stoichiometric compatibility class.   
Indeed, $\cG$ is endotactic (by Theorem~\ref{thm:A-endotactic=endotactic}; see Example~\ref{ex:typical-endotactic} for more details), and
by Theorem~\ref{thm:deterministic-stability-endotactic},
 the positive equilibrium in each stoichiometric compatibility class is globally asymptotically stable in that class. Furthermore, the convergence to the equilibrium is exponentially fast (the interested reader may jump to Example~\ref{ex:rate-convergence} for more details).}
\end{example}

\noindent\textbf{Extensions and Applications.} {It is noteworthy that despite the stability  results were established only for first order mass-action systems in this paper, they can be extended to higher order mass-action systems which are not necessarily endotactic, based on the \emph{theory of asymptotically autonomous differential equations} \cite{T92}. 

\begin{example}
Consider the following bimolecular mass-action system:
\begin{center}\begin{tikzcd}[row sep=1em, column sep = 1em]
\cG\colon S_2\rar{2}&S_1\rar{2}&0\rar{2}&S_1+S_2& S_2+S_3\rar{3}&S_1+S_3&S_3\rar{1}&S_4\rar{1}&S_5\arrow[bend left=15]{ll}{2}
\end{tikzcd}
\end{center}
    It is straightforward to verify that $\mathcal{G}$ is \emph{not} ($(1,0,2,2,2)$-)endotactic. Hence related known results \cite{A11a,A11b,CNP13,GMS14} in the literature do not yield permanence and hence the existence of a globally asymptotically stable non-negative equilibrium of $\cG$ in every stoichiometric compatibility class. Nevertheless, by a result of the asymptotically autonomous differential equations \cite{T92}, one can show that there exists a unique globally asymptotically stable positive equilibrium in each positive stoichiometric compatibility class. For more details, see Example~\ref{ex:nonlinear}.
\end{example}

For the coherence of this paper, we do not discuss the full extension along this line in this paper, but leave it to a future work. 

It is worth mentioning that not only applicable to stability problems in the deterministic regime of reaction networks, the main results (Theorem~\ref{thm:A-endotactic=endotactic} and Theorem~\ref{thm:endotactic=weakly-reversible}) also play a fundamental role in studying  \emph{exponential ergodicity} of first order endotactic \emph{stochastic} mass-action reaction systems in a companion work (see Remark~\ref{re:A-endotactic=endotactic}(iv) for more details).}

\subsection*{Outline.}
We introduce notation and review necessary terminology in  graph theory in the next section. Then we introduce reaction networks in Section~\ref{sect:RN}. In Section~\ref{sect:endotacticity}, we prove some properties of endotactic reaction graphs in general. 
Theorem~\ref{thm:A-endotactic=endotactic} and Theorem~\ref{thm:endotactic=weakly-reversible} for first order endotactic reaction graphs are established  in Section~\ref{sect:endotacticity-first-order}. As an application of the main results established in Section~\ref{sect:endotacticity-first-order}, Theorem~\ref{thm:deterministic-stability-endotactic} is given in Section~\ref{sect:stability}. 
\endgroup

\section{Preliminaries}
\subsection{Notation}

Let $\mathbb{R}$ be the set of real numbers, $\mathbb{R}_+$ the set of non-negative real numbers, and $\mathbb{R}_{++}$ the set of positive real numbers. Let $\mathbb{N}_0$ and $\mathbb{N}$ be the set of non-negative integers and that of positive integers, respectively. For $d\in\mathbb{N}$, let $[d]=\{i\}_{i=1}^d$ and $[d]_0=[d]\cup\{0\}$. For every $x\in\mathbb{R}^d$, let $\|x\|_1\coloneqq\sum_{i=1}^d|x_i|$ be its $\ell_1$-norm. For any set $A$, let $\# A$ denote its cardinality. For two disjoint sets $A_1,\ A_2\subseteq\mathbb{R}^d$, we write $A_1\sqcup A_2$ to denote the union of $A_1$ and $A_2$ with the square shape to emphasize that they are disjoint, and similarly $\sqcup_{i=1}^mA_i$ for the union of pairwise disjoint sets $A_i$ for $i\in[m]$, for some $m\in\mathbb{N}\setminus\{1\}$. Let $\Delta_d\coloneqq\{x\in\mathbb{R}^d_+\colon \|x\|_1=1\}$ be the standard simplex of $\mathbb{R}^d$.  Unless stated otherwise, any vector $v\in\mathbb{R}^d$ is a \emph{row} vector throughout this paper.

For any vector $v\in\mathbb{R}^d$, let $\supp v\coloneqq\{i\in[d]\colon v_i\neq0\}$ be the \emph{support} of $v$, $\supp_+ v\coloneqq\{i\in[d]\colon v_i>0\}$ the  \emph{positive support} of $v$, and $\supp_-v=\supp v\setminus\supp_+$ the \emph{negative support} of $v$.
 For a set $V\subseteq\mathbb{R}^d$, let $\supp V\coloneqq\cup_{v\in V}\supp v$ be the support of $V$.  A vector $v=(v_1,\ldots,v_d)\in\mathbb{R}^d_+$ is called a \emph{non-negative} vector  and denoted $v\ge0$; a   vector $v\in\mathbb{R}^d_{++}$ is called a \emph{positive} vector and denoted $v>0$; and a vector $v\in\mathbb{R}^d$ is \emph{negative} if $-v$ is positive.  Let $v^{\perp}\coloneqq\{u\in \mathbb{R}^d\colon u\cdot v^T=0\}$ be the \emph{orthogonal complement} of $v$.  Let $\{e_i\}_{i=1}^d$ be the standard orthonormal basis of $\mathbb{R}^d$.
 Let $\mathbf{1}_d\coloneqq\sum_{i=1}^de_i$, and we simply write $\mathbf{1}$ whenever the dimension $d$ is clear from the context. In contrast, we are a bit sloppy about the use of $0$ without the bold font, which can stand for either a scalar or a vector depending on the context.  Let $\mathcal{M}_d(\mathbb{R})$ be the set of all $d$ by $d$ matrices with real entries. For $A\in\mathcal{M}_d(\mathbb{R})$, let $A^T$ denote its transpose.

\subsection{Graph Theory}

Let $\cG=(\cV,\cE)$ be a simple directed graph. \emph{Throughout, a simple directed graph is called a graph for short.} A directed graph $\cG$ is empty and denoted
$\emptyset$ if it consists of no vertex (and hence no edge either). The number of edges \emph{to} a vertex in a directed graph is the \emph{in-degree} of that vertex, and the number of edges \emph{from} a vertex in a directed graph is the \emph{out-degree} of that vertex. A vertex having zero in-degree and zero out-degree is an \emph{isolated vertex}.  Recall that a graph $\mathcal{G}'=(\cV',\cE')$ is a \emph{subgraph} of $\cG=(\cV,\cE)$ and denoted $\cG'\subseteq\cG$ (or $\cG\supseteq\cG'$) if $\cV' \subseteq \cV$ and $\cE' \subseteq \cE$. For two vertices $y,z\in\cV$, $y\neq z$,  we say $y$ \emph{connects to} $z$ and denoted by $y\rightharpoonup z$ if there exists a directed path $y=y_1\to y_2\ce{->[]} y_3\cdots \ce{->[]} y_k=z$ with edges $y_i\to y_{i+1}\in\cE$ for $i=1,\ldots,k-1$ for some $k\in\mathbb{N}\setminus\{1\}$. If $y\rightharpoonup z$ and $z \rightharpoonup y$, then we write $y \leftrightharpoonup z$. By convention, $y\leftrightharpoonup y$ for every $y\in\cV$. \emph{A spanning tree} of a directed graph is an \emph{acyclic} subgraph  of $\cG$ sharing the same full set of vertices, and with one  vertex, called the \emph{root}, that connects to all other vertices.

Hence $\rightharpoonup$ induces a partial order on $\cV$, and $\leftrightharpoonup$ induces an equivalence relation on $\cV$; moreover,  any equivalent class defined by $\leftrightharpoonup$ is a strongly connected component of $\cG$. Let  $\cG_1=(\cV_1,\cE_1)$ and $\cG_2=(\cV_2,\cE_2)$ be  two strongly connected components of $\cG$. If there exists $y\in\cV_1$ and $y'\in\cV_2$ such that $y\rightharpoonup y'$
(and hence $y'\not\rightharpoonup y$), then we denote by $\cG_1\prec \cG_2$ and say $\cG_1$ is $\prec$-\emph{smaller} than $\cG_2$. Note that $\prec$ also induces a partial order among all strongly connected components of $\cG$. By Zorn's lemma, for every directed graph, there always exists a $\prec$-maximal strongly connected component as well as a $\prec$-minimal strongly connected component.

\section{Reaction networks}\label{sect:RN}
We will then move on to introduce terminologies of reaction networks. We mainly follow the convention of  CRNT \cite{F19}; slight \emph{discrepancy} in term or notation  without causing unnecessary confusion may be expected in this paper for the ease of exposition.

A \emph{reaction graph} (of $d$ species) is an  \emph{unweighted} (possibly empty) simple directed graph $\cG=(\cV,\cE)$ embedded in $\mathbb{R}^d$ \emph{without any isolated vertex}. A non-empty reaction graph is also known as a \emph{Euclidean embedded graph} \cite{C19}.

Every unit vector $e_i$ for $i\in[d]$ in $\mathbb{R}^d$ is called a \emph{species}, and alternatively denoted by the symbol $S_i$. Given a \emph{non-empty} reaction graph $\cG=(\cV,\cE)$,  every vertex in $\cV$ is called a \emph{complex}.  Every directed edge $y\ce{->[]} y'\in\cE$ from a complex $y$ to a complex $y'$ is a bona fide vector in $\mathbb{R}^d\setminus\{0\}$, called a \emph{reaction};
$y'-y$ is called the \emph{reaction vector},
 $y$ is called the \emph{source} of the reaction and $y'$ the \emph{target}.
As we will see, the set of sources of all reactions, in contrast to that of targets of all reactions, will appear frequently in this paper and hence deserves a separate notation, $\cV_+$.  \emph{Throughout this paper, $\cV_+$  will automatically associate with the reaction graph $\cG=(\cV,\cE)$.}
 Hence every vertex of a positive out-degree is a \emph{source} and every vertex of a positive in-degree is a \emph{target}. To sum up, every complex is a linear combination of $S_i$, and the set of reactions defines a \emph{relation} on the set of complexes. The triple set of species, complexes and reactions is called a \emph{reaction network}. For instance in Example~\ref{ex:Introductory-example} in the Introduction,  $S_i$ for $i=1,\ldots,5$ are species, $\cV_+=\{0,e_1,e_2,e_3,e_4,e_5\}$
 consists of sources,
 and there are in total six reactions.

A reaction graph is called \emph{weakly reversible} if there exist no two strongly connected components that are weakly connected. Hence \emph{an empty graph is weakly reversible.} The linear span of reaction vectors of a reaction graph $\cG$ in the real field $\mathbb{R}$ is called the \emph{stoichiometric subspace} of $\cG$:
$$\mathsf{S}_{\cG}\coloneqq \text{span}\{y'-y\colon y\ce{->[]} y'\in\cE\}$$ By convention, $\mathsf{S}_{\emptyset}=\{0\}$ and $\mathsf{S}_{\emptyset}^{\perp}=\mathbb{R}^d$. If $\mathsf{S}_{\cG}\cap\mathbb{R}^d_{++}\neq\emptyset$, then any vector in $\mathsf{S}_{\cG}\cap\mathbb{R}^d_{++}$ is a \emph{conservation law} of $\cG$. The dimension of $\mathsf{S}_{\cG}$ is referred to as the \emph{dimension} of $\cG$. Each  translation of the stoichiometric subspace by a point in $\mathbb{R}^d$ confined to $\mathbb{R}^d_+$ is a \emph{stoichiometric compatibility class} \cite{F87}; in particular, the interior of a stoichiometric compatibility class whenever it is non-empty, is a \emph{positive stoichiometric compatibility class} \cite{F87}. Hence the (Hausdorff) dimension of any \emph{positive}  stoichiometric compatibility class equals that of the stoichiometric subspace.

Let $\cG=(\cV,\cE)$ be a reaction graph. Let $\ell_{\cG}$ be the number of  strongly connected components \emph{containing at least two vertices} of $\cG$.  The \emph{deficiency} of $\cG$ is defined to be the integer $\#\cV-\dim\mathsf{S}_{\cG}-\ell_{\cG}$, which is always non-negative  \cite{F19}. Hence \emph{an empty reaction graph is of deficiency zero}. Speaking of a weakly reversible  reaction network, its deficiency is the number of \emph{independent} equations for the edge weights of the graph in order for the reaction network to be \emph{complex-balanced} \cite{J12,F19} (for the definition of complex-balanced reaction network, see \eqref{eq:complex-balancing} in Section~\ref{sect:stability}.).  For each reaction $y\ce{->[]} y'\in\mathcal{E}$, the $\ell_1$-norm $\|y\|_1$ of the source is called the \emph{order of the reaction}\footnote{Indeed, the $\ell_1$-norm of the source is termed as \emph{molecularity} of the reaction in the literature of chemistry \cite{M67}. With mass-action kinetics, it agrees with the \emph{order} of the reaction, which is the sum of exponents of the monomial propensity function of the reaction. Since the paper focuses on dynamics of mass-action systems, we abuse ``order'' for ``molecularity'' to save the latter term.}, and $\max\{\|y\|_1,\|y'\|_1\}$ is called the \emph{gross order of the reaction}.
Obviously for a given reaction, its gross order is no smaller  than its order.
  Let $r=\max_{y\in\cV_+}\|y\|_1$ be the \emph{order of $\mathcal{G}$} and
$r'=\max_{y\in\cV}\|y\|_1$ the \emph{gross order of $\cG$}.  Analogously, $r'\ge r$. It is noteworthy that since the reaction graph is embedded in $\mathbb{R}^d$, the order of a reaction graph, despite is always non-negative, may \emph{not} be an integer.
In particular, a reaction graph embedded in $\mathbb{N}^d_0$ of gross order one is called \emph{monomolecular}. Let $\cG^*=(\cV^*,\cE^*)$ with $\cE^*=\{y\ce{->} y'\in\mathcal{E}\colon \|y\|_1=\max_{z\in\cV_+}\|z\|_1\}$ and $\cV^*=\{y,y'\colon y\ce{->}y'\in \cE^*\}$ be the sub reaction graph of $\cG$  consisting of purely  highest order
  reactions of $\cG$. A reaction graph $\cG$ is \emph{homogeneous} if $\cG=\cG^*$.  Throughout, unless stated otherwise, \emph{all reaction graphs are assumed to have the same set of species $\mathcal{S}=\{S_i\}_{i=1}^d$ for some $d\in\mathbb{N}$, particularly when they appear in a context for comparison}.

A species $S_i$ is \emph{redundant} if $e_i\in\mathsf{S}^{\perp}_{\cG}$, i.e., $(y'-y)_i=0$ for all reactions $y\ce{->[]} y'$ of $\mathcal{G}$; in other words, there is no molecule change for species $S_i$ in any reaction. For the ease of exposition and without loss of generality (w.l.o.g.), we assume throughout that \emph{reaction graphs have no redundant species}, namely $\{e_i\}_{i\in[d]}\cap\mathsf{S}^{\perp}_{\cG}=\emptyset$. To rephrase this running assumption,  we exclude the case where all reactions lie on a finite set of hyperplanes of a positive dimension whose reaction vectors are perpendicular to $e_i$ for any $i\in[d]$.  Otherwise, in the study of reaction networks, one can always embed the \emph{kinetic} effect induced by redundant species in  the reaction rate constants, and decompose the reaction network into finitely many sub reaction networks and study their (dynamical) properties separately.

\section{Endotactic reaction graphs}\label{sect:endotacticity}
In this section, 
we first establish some properties of  \emph{endotactic reaction graphs}, which themselves are \emph{not} limited to first order reaction graphs and hence are of independent interest.

A wide class of reaction networks are \emph{endotactic reaction networks} (i.e., endotactic reaction graphs in this paper), which were introduced in \cite{CNP13}. A subset of endotactic reaction networks with some additional properties (see below for the precise definition) are called \emph{strongly endotactic reaction networks} \cite{GMS14}. Both insightful concepts were introduced to investigate \emph{permanence} and \emph{persistence} of reaction systems \cite{CNP13,P12,GMS14}.

Let $\cG=(\cV,\cE)$ be a reaction graph embedded in $\mathbb{R}^d$. For every
$u\in\mathbb{R}^d$,  define a sub reaction graph $\cG_u=(\cV_u,\cE_u)$, where $$\mathcal{E}_u\coloneqq\{y\ce{->[]} y'\in\mathcal{E}\colon y'-y\notin u^{\perp}\},\quad \cV_u\coloneqq\{y, y'\colon y\ce{->[]} y'\in\mathcal{E}_u\}$$
 In other words, the possibly empty reaction graph $\cG_u$ \emph{consists of all reactions in $\cE$ whose reaction vectors are not perpendicular to $u$}. Let $\cV_{u,+}\subseteq\cV_u$ be the set of sources of $\cG_u$. Note that the vector $u$ induces a \emph{total} order on $\cV_{u}$: $$y>_u z \iff (y-z)\cdot u^T > 0;\ y=_uz \iff (y-z)\cdot u^T = 0;\ y<_u z \iff (y-z)\cdot u^T < 0$$ Two complexes $y,z\in\cV$ are called \emph{$u$-equal} if $y=_uz$. Essentially, all $u$-equal points in $\mathbb{R}^d$ lie in a subspace as a translation of $u^{\perp}$. Hence  $\cG_u$ is obtained by \emph{removing all edges between two $u$-equal vertices, as well as all resulted isolated vertices to obtain a subgraph of $\cG$}. Moreover, a complex is \emph{$u$-maximal} in a subset (e.g., $\cV_{u,+}$) of $\cV$ if under this total order it is maximal in that set.  Let $\supp_u\cG$ be the set of $u$-maximal elements in $\cV_{u,+}$. Note that \emph{all elements in $\supp_u\cG$ are $u$-equal}.

\begin{definition}\label{def:endotacticity}
Let $\cG$ be a  reaction graph. Any reaction $y\ce{->[]} y'\in\cG$ satisfying $y<_uy'$ and
 $y\in\supp_u\cG$  for some $u\in\mathbb{R}^d$ is called a \emph{$u$-endotacticity violating reaction} of $\cG$, or simply called \emph{endotacticity violating reaction} of $\cG$ when $u$ is deemphasized.
We say the reaction graph $\mathcal{G}$ is \emph{$u$-endotactic} if $\cG$ has no $u$-endotacticity violating reaction.  Furthermore, a $u$-endotactic reaction graph $\cG$ is \emph{$u$-strongly endotactic} if additionally
$\supp_u\cG$ contains a $u$-maximal element in $\cV_+$. Given any subset $\cU\subseteq\mathbb{R}^d$,  we say $\cG$ is \emph{$\mathcal{U}$-endotactic} (\emph{$\mathcal{U}$-strongly endotactic}, respectively) if $\cG$ is $u$-endotactic ($u$-strongly endotactic, respectively) for every  $u\in\cU$. \emph{By convention, no reaction graph but the empty reaction graph is $\emptyset$-endotactic.}
 In particular, $\cG$ is \emph{lower-endotactic} (\emph{lower-strongly endotactic}, respectively) if it is $\mathbb{R}^d_+$-endotactic ($\mathbb{R}^d_+$-strongly endotactic, respectively), and $\cG$ is   \emph{endotactic} (\emph{strongly endotactic}, respectively) if $\cG$ is $\mathbb{R}^d$-endotactic ($\mathbb{R}^d$-strongly endotactic, respectively). In other words, \emph{$\cG$ is endotactic if and only if $\cG$ has no endotacticity-violating reaction}. Hence an empty graph is endotactic but \emph{not} strongly endotactic.
\end{definition}

By definition, \emph{$\cG$ is endotactic if and only if it is $\mathbb{R}^d\setminus\mathsf{S}^{\perp}_{\cG}$-endotactic.}

To determine whether a given reaction graph is $u$-endotactic for a vector $u\in\mathbb{R}^d\setminus\mathsf{S}^{\perp}_{\cG}$, one can move a hyperplane parallel to $u^{\perp}$ towards the direction of $u$ as a bona fide vector to \emph{sweep} all reactions  also as bona fide vectors in $\mathbb{R}^d\setminus u^{\perp}$; if the hyperplane will \emph{first sweep the target but not the source} of the first reaction, then the reaction \emph{passes the test}  and is verified to be $u$-endotactic. This is the so-called  \emph{parallel sweep test} \cite{CNP13}.

For two dimensional reaction graphs, endotacticity is equivalent to $\mathcal{U}$-endotacticity for a finite set $\mathcal{U}$  \cite[Proposition~4.1]{CNP13}, which depends on the reaction graph. To the best knowledge of the author, it seems open if analogous results hold for higher dimensions.

In the following, we will provide a necessary and sufficient condition (Theorem~\ref{thm:implication-among-three-RNs}) for a reaction graph to be endotactic.

Given reaction graph $\cG=(\cV,\cE)$, define the following sub reaction graph of $\cG$:
\begin{equation}\label{eq:reaction-tree}
\cG^{\RS{lir}}=(\cV^{\RS{lir}},\cE^{\RS{lir}}),
\end{equation} where $\cE^{\RS{lir}}\subseteq\cE$ consists of reactions \emph{whose source and target are in different strongly connected components} of $\cG$, and $\cV^{\RS{lir}}$ is the set of complexes of reactions in $\cE^{\RS{lir}}$. Note that $\cG^{\RS{lir}}$ is a (possibly empty)  sub reaction graph of $\cG$, and is a \emph{tree} whenever it is non-empty.

To prove Theorem~\ref{thm:implication-among-three-RNs}, we rely on the lemma below, which presents a necessary condition for a reaction to be endotacticity-violating.
\begin{lemma}\label{le:endotacticity-violating}
Let $\cG$ be a reaction graph embedded in $\mathbb{R}^d$. Let $w\in\mathbb{R}^d$. Assume $y\ce{->[]} y'\in\cE$ is a $w$-endotacticity-violating reaction. Then for all $z\in\cV$ such that $y'\rightharpoonup z$, we have $y'=_wz$  and $z\notin\cV_{w,+}$. In particular, for any endotacticity-violating reaction, its source and its target must lie in different strongly connected components of $\cG$.
\end{lemma}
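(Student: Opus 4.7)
The plan is to unfold what ``$w$-endotacticity-violating'' means and then walk along a directed path from $y'$ to $z$, showing inductively that the $w$-value cannot change and that no intermediate vertex can belong to $\cV_{w,+}$. By hypothesis, $y<_w y'$ and $y\in\supp_w\cG$, so $y$ attains the maximum value $M\coloneqq\max\{v\cdot w^T\colon v\in\cV_{w,+}\}$ while $y'\cdot w^T>M$. The guiding observation is a simple one: if any complex $v$ with $v\cdot w^T>M$ were a source of a reaction in $\cE_w$, then $v\in\cV_{w,+}$ would violate the $w$-maximality of $y$ in $\cV_{w,+}$; so any vertex strictly above the level $M$ cannot emit a reaction outside of $w^\perp$.

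With this in hand, I would apply the observation first to $y'$ itself: it lies strictly above $M$, so $y'\notin\cV_{w,+}$, meaning every reaction out of $y'$ has reaction vector in $w^\perp$ and thus carries $y'$ to a $w$-equal complex. I would then iterate this along a path $y'=y_1\to y_2\to\cdots\to y_k=z$ by induction on $j$: assuming $y_j=_w y'$ and $y_j\notin\cV_{w,+}$, the edge $y_j\to y_{j+1}$ lies in $\cE\setminus\cE_w$ (because $y_j$ is not a source of any reaction in $\cG_w$), hence $y_{j+1}=_w y_j=_w y'$, and then $y_{j+1}$ is again strictly above $M$, forcing $y_{j+1}\notin\cV_{w,+}$ by the same observation. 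Taking $j=k$ yields $z=_w y'$ and $z\notin\cV_{w,+}$.

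For the ``in particular'' statement, suppose for contradiction that the source $y$ and the target $y'$ of an endotacticity-violating reaction lay in the same strongly connected component of $\cG$. Since $y<_w y'$ in particular $y\neq y'$, so $y'\rightharpoonup y$; the first part of the lemma just proved then gives $y=_w y'$, which contradicts $y<_w y'$.

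I do not expect any serious obstacle; the argument is a one-page induction. The only subtlety is to keep track of both invariants simultaneously along the path\,---\,the $w$-equality $y_j=_w y'$ and the absence $y_j\notin\cV_{w,+}$\,---\,since each is needed to propagate the other: absence from $\cV_{w,+}$ forces the outgoing edge to stay at the same $w$-level, while $w$-equality with $y'$ keeps the vertex strictly above $M$ and so forces absence from $\cV_{w,+}$ at the next step.
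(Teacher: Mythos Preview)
Your proposal is correct and follows essentially the same approach as the paper's own proof: both argue that $y'\notin\cV_{w,+}$ because $y'>_w y$ would contradict $w$-maximality of $y$ in $\cV_{w,+}$, then propagate this along a directed path by induction, and derive the ``in particular'' statement from the first part. Your write-up is more explicit than the paper's (which compresses the induction to a single sentence) in tracking the two invariants $y_j=_w y'$ and $y_j\notin\cV_{w,+}$ simultaneously, but the underlying argument is identical.
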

\begin{proof}
Assume the former conclusion is true. Let $y\ce{->[]} y'$ be an endotacticity-violating reaciton. By contraposition, $y'\not\rightharpoonup y$, which yields that $y$ and $y'$ are in different strongly connected components. Hence it suffices to prove the former conclusion. Since $y\ce{->[]} y'$ is $w$-endotacticity violating, we have 
$y\in\supp_w\cG$ and $y'\notin\cV_{w,+}$ (otherwise it would contradict the $w$-maximality of $y$ in $\cV_{w,+}$). This implies that for any $z\in\cV$ such that $y'\ce{->[]} z\in\cE$, we have $y'\ce{->[]} z\notin\cE_w$, i.e.,  $z=_wy'$. 
 By induction, the desired (former) conclusion follows.
\end{proof}
\begin{corollary}\label{cor:simplified-endotacticity-def}
Let $\cG$ be a reaction graph embedded in $\mathbb{R}^d$. Then $\cG$ is endotactic if and only if $\cG$ is $\mathbb{R}^d\setminus\mathsf{S}_{\cG^{\RS{lir}}}^{\perp}$-endotactic.
\end{corollary}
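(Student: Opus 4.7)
The plan is a short two-direction argument that leverages Lemma~\ref{le:endotacticity-violating} almost immediately; the only real content is the reverse implication.

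For the forward direction ($\Longrightarrow$), I would simply observe that $\mathbb{R}^d \setminus \mathsf{S}_{\cG^{\RS{lir}}}^{\perp} \subseteq \mathbb{R}^d$, so endotacticity (i.e. $\mathbb{R}^d$-endotacticity by definition) trivially implies $\mathbb{R}^d \setminus \mathsf{S}_{\cG^{\RS{lir}}}^{\perp}$-endotacticity. No work is required here.

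For the reverse direction ($\Longleftarrow$), fix an arbitrary $u \in \mathbb{R}^d$ and show that $\cG$ has no $u$-endotacticity-violating reaction. If $u \in \mathbb{R}^d \setminus \mathsf{S}_{\cG^{\RS{lir}}}^{\perp}$, there is nothing to prove by hypothesis. Otherwise $u \in \mathsf{S}_{\cG^{\RS{lir}}}^{\perp}$, which by definition of $\mathsf{S}_{\cG^{\RS{lir}}}$ means that $(y'-y) \cdot u^T = 0$ for every reaction $y \ce{->} y' \in \cE^{\RS{lir}}$. Suppose for contradiction there exists a $u$-endotacticity-violating reaction $y \ce{->} y' \in \cE$. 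By Lemma~\ref{le:endotacticity-violating}, $y$ and $y'$ must lie in different strongly connected components of $\cG$, so $y \ce{->} y' \in \cE^{\RS{lir}}$ by the definition of $\cG^{\RS{lir}}$ in \eqref{eq:reaction-tree}. But then $y =_u y'$, contradicting the requirement $y <_u y'$ in the definition of a $u$-endotacticity-violating reaction. Hence $\cG$ is $u$-endotactic, and since $u$ was arbitrary, $\cG$ is endotactic.

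The argument is essentially a one-line application of the previous lemma, so no real obstacle is anticipated; the only subtle point is to be sure that the excluded set on the $u$-side is correctly aligned with $\cG^{\RS{lir}}$ (and not, say, with $\cG$ itself), which is exactly what Lemma~\ref{le:endotacticity-violating} guarantees by confining violating reactions to edges between distinct strongly connected components.
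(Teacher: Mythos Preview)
Your proposal is correct and follows essentially the same approach as the paper: both directions use Lemma~\ref{le:endotacticity-violating} to place any endotacticity-violating reaction into $\cE^{\RS{lir}}$, and then note that this forces the offending vector to lie outside $\mathsf{S}_{\cG^{\RS{lir}}}^{\perp}$. The only cosmetic difference is that the paper argues the reverse implication by a single contraposition, whereas you split on whether $u$ already lies in $\mathbb{R}^d\setminus\mathsf{S}_{\cG^{\RS{lir}}}^{\perp}$ before reaching the same contradiction.
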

\begin{proof}
\noindent $\implies$ It is obvious since $\mathbb{R}^d\setminus\mathsf{S}_{\cG^{\RS{lir}}}^{\perp}\subseteq\mathbb{R}^d$.
\medskip

\noindent $\impliedby$ We prove it by contraposition. Suppose $\cG$ is \emph{not} endotactic, then there exists a $w$-endotacticity violating reaction $y\ce{->[]} y'\in\cE$ of $\cG$. By Lemma~\ref{le:endotacticity-violating}, $y\ce{->[]} y'\in \cE^{\RS{lir}}$. Note that $w\notin(y'-y)^{\perp}$, which implies that $w\in\mathbb{R}^d\setminus\mathsf{S}_{\cG^{\RS{lir}}}^{\perp}$. This  contradicts that $\cG$ is $\mathbb{R}^d\setminus\mathsf{S}_{\cG^{\RS{lir}}}^{\perp}$-endotactic.
\end{proof}
\begin{theorem}\label{thm:implication-among-three-RNs}
Given a reaction graph $\cG$ embedded in $\mathbb{R}^d$, let $\cG^{\RS{lir}}$ be defined in \eqref{eq:reaction-tree}. Then
$\cG$ is endotactic if and only if there exists a (possibly empty) endotactic reaction graph $\widehat{\cG}$  
such that $\cG^{\RS{lir}} \subseteq \widehat{\cG}\subseteq \cG$.
\end{theorem}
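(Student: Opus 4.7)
The forward direction is immediate: if $\cG$ is endotactic, then $\widehat{\cG}=\cG$ itself satisfies $\cG^{\RS{lir}}\subseteq\widehat{\cG}\subseteq\cG$, and $\widehat{\cG}$ is endotactic by assumption. So the whole content lies in the converse.

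For the converse, the plan is to argue by contraposition: assume $\cG$ is \emph{not} endotactic, and show that no endotactic sandwiched $\widehat{\cG}$ can exist. Pick a $w$-endotacticity-violating reaction $y\ce{->[]}y'\in\cE$. The first step uses Lemma~\ref{le:endotacticity-violating}: the source and target of any endotacticity-violating reaction lie in different strongly connected components of $\cG$, so $y\ce{->[]}y'\in\cE^{\RS{lir}}$. Hence, for \emph{every} $\widehat{\cG}$ with $\cG^{\RS{lir}}\subseteq\widehat{\cG}\subseteq\cG$, the reaction $y\ce{->[]}y'$ is already present in $\widehat{\cE}$.

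The second step is a monotonicity observation about $\supp_w$. Since $\widehat{\cE}\subseteq\cE$, we have $\widehat{\cE}_w\subseteq\cE_w$, and consequently the source set $\widehat{\cV}_{w,+}$ of $\widehat{\cG}_w$ is contained in the source set $\cV_{w,+}$ of $\cG_w$. The reaction $y\ce{->[]}y'$ belongs to $\widehat{\cE}_w$ (its reaction vector satisfies $y'-y\notin w^\perp$ because $y<_wy'$), so $y\in\widehat{\cV}_{w,+}$. Moreover, since $y$ is $w$-maximal in the larger set $\cV_{w,+}$, it is \emph{a fortiori} $w$-maximal in the subset $\widehat{\cV}_{w,+}$, i.e., $y\in\supp_w\widehat{\cG}$. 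Combined with $y<_wy'$ (which depends only on the two complexes, not on the ambient graph), this exhibits $y\ce{->[]}y'$ as a $w$-endotacticity-violating reaction of $\widehat{\cG}$, contradicting the endotacticity of $\widehat{\cG}$.

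I do not anticipate a serious technical obstacle here; the theorem is essentially a packaging of Lemma~\ref{le:endotacticity-violating} together with the elementary monotonicity of $\supp_w$ under passage to subgraphs. The only point that requires a moment's care is verifying that $y$, which \emph{a priori} is $w$-maximal among sources of $\cG$, genuinely survives as a source of $\widehat{\cG}_w$; this is what the inclusion $\cG^{\RS{lir}}\subseteq\widehat{\cG}$ buys us, by ensuring that the specific violating reaction $y\ce{->[]}y'$ is retained.
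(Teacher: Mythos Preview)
Your proposal is correct and follows essentially the same approach as the paper: both argue the forward direction trivially via $\widehat{\cG}=\cG$, and for the converse proceed by contraposition, using Lemma~\ref{le:endotacticity-violating} to place the violating reaction in $\cE^{\RS{lir}}\subseteq\widehat{\cE}$ and the inclusion $\widehat{\cV}_{w,+}\subseteq\cV_{w,+}$ to transfer $w$-maximality of $y$ to the subgraph. Your write-up is slightly more explicit about why $y$ survives as a source of $\widehat{\cG}_w$, but the underlying argument is identical.
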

\begin{proof}
The forward implication (``only if'' part) is trivial since one can simply take $\widehat{\cG}=\cG$. It remains to show the reverse implication. We prove it by contraposition. Suppose $\cG$ is \emph{not} endotactic. Then there exists a $w$-endotacticity violating reaction  $y\ce{->[]} y'\in\cE$  of $\cG$ with a 
$y\in\supp_w\cG$  for some $w\in\mathbb{R}^d$.  By Lemma~\ref{le:endotacticity-violating}, $y\ce{->[]} y'\in \cE^{\RS{lir}} \subseteq\widehat{\cE}$. Moreover,  
$y\in\supp_w\widehat{\cG}$  since $\widehat{\cV}_{w,+}\subseteq\cV_{w,+}$.  Thus $y\ce{->[]} y'$ is also a $w$-endotacticity violating reaction of $\widehat{\cG}$, contradicting that $\widehat{\cG}$ is endotactic.
\end{proof}

From Theorem~\ref{thm:implication-among-three-RNs} we can recover the following known result \cite{CNP13}.
\begin{corollary}\label{cor:simplified-criterion-for-endotacticity}
Every weakly reversible reaction graph is endotactic.
\end{corollary}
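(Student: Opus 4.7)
The plan is to deduce this corollary as an immediate consequence of Theorem~\ref{thm:implication-among-three-RNs} by taking $\widehat{\cG}$ to be the empty reaction graph. The essential point is that weak reversibility forces $\cG^{\RS{lir}}$ to be empty, and the empty reaction graph is trivially endotactic.

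First I would verify the following observation: if $\cG=(\cV,\cE)$ is weakly reversible, then for every reaction $y\ce{->[]} y'\in\cE$, the source $y$ and the target $y'$ lie in the \emph{same} strongly connected component of $\cG$. Indeed, the edge $y\ce{->[]} y'$ places $y$ and $y'$ in the same weakly connected component, and weak reversibility (no two strongly connected components are weakly connected) then forces this weakly connected component to coincide with a single strongly connected component.

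By the definition of $\cG^{\RS{lir}}$ in \eqref{eq:reaction-tree}, its edge set $\cE^{\RS{lir}}$ consists precisely of those reactions whose source and target lie in \emph{different} strongly connected components. The previous observation thus yields $\cE^{\RS{lir}}=\emptyset$, and hence $\cG^{\RS{lir}}=\emptyset$. Now the empty reaction graph is vacuously endotactic (it has no reactions, so no endotacticity-violating reactions), so setting $\widehat{\cG}=\emptyset$ gives an endotactic graph with
\[
\cG^{\RS{lir}}=\emptyset\subseteq\widehat{\cG}\subseteq\cG.
\]
Applying Theorem~\ref{thm:implication-among-three-RNs} then immediately yields that $\cG$ is endotactic.

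There is no genuine obstacle to overcome: all the heavy lifting is carried by Lemma~\ref{le:endotacticity-violating} and Theorem~\ref{thm:implication-among-three-RNs}, which already encode the insight that endotacticity violations can only involve reactions crossing between distinct strongly connected components. The corollary merely records that weak reversibility eliminates such crossing reactions altogether.
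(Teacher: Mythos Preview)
Your proposal is correct and follows essentially the same approach as the paper's own proof: both observe that weak reversibility forces $\cG^{\RS{lir}}=\emptyset$, note that the empty graph is endotactic, and then invoke Theorem~\ref{thm:implication-among-three-RNs}. Your version simply spells out in more detail why weak reversibility yields $\cE^{\RS{lir}}=\emptyset$, whereas the paper states this as an ``if and only if'' without further elaboration.
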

\begin{proof}
Note that $\cG$ is weakly reversible if and only if $\cG^{\RS{lir}}$ is empty, which is endotactic. By Theorem~\ref{thm:implication-among-three-RNs}, $\cG$ is endotactic.
\end{proof}

We also obtain a simple sufficient condition for endotacticity of a reaction graph $\cG$, based on a  property of $\cG^{\RS{lir}}$.
\begin{corollary}\label{cor:simplified-sufficient-condition-for-endotacticity}
Given a reaction graph $\cG$ embedded in $\mathbb{R}^d$, let $\cG^{\RS{lir}}$ be defined in \eqref{eq:reaction-tree}. Assume for every reaction $y\ce{->}y'\in \cE^{\RS{lir}}$, there exists $y'\ce{->}y''\in\cE$ such that $(y'-y) \parallelsum (y''-y')$. Then $\cG$ is endotactic.
\end{corollary}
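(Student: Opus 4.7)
The plan is to argue by contraposition, reducing to Lemma~\ref{le:endotacticity-violating}: the parallelism assumption forces any hypothetical endotacticity-violating reaction to propagate along a parallel chain, which is incompatible with the structural constraint that all successors of such a reaction must be $w$-equal to its target.

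First I would assume that $\cG$ is not endotactic and extract a vector $w\in\mathbb{R}^d$ together with a $w$-endotacticity-violating reaction $y\ce{->}y'\in\cE$, with $y\in\supp_w\cG$ and $y<_wy'$. By Lemma~\ref{le:endotacticity-violating}, such a reaction must connect distinct strongly connected components, so $y\ce{->}y'\in\cE^{\RS{lir}}$, and for every $z\in\cV$ with $y'\rightharpoonup z$ one has $y'=_w z$, i.e., $(z-y')\cdot w^T=0$.

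Next I would invoke the hypothesis of the corollary: since $y\ce{->}y'\in\cE^{\RS{lir}}$, there exists a reaction $y'\ce{->}y''\in\cE$ with $(y''-y')\parallelsum (y'-y)$, so $y''-y'=\lambda(y'-y)$ for some nonzero scalar $\lambda$. Because $y\ce{->}y'\in\cE_w$, we have $(y'-y)\cdot w^T\neq 0$, and parallelism then gives $(y''-y')\cdot w^T=\lambda(y'-y)\cdot w^T\neq 0$. In particular $y'\neq_w y''$. On the other hand, $y'\ce{->}y''\in\cE$ yields $y'\rightharpoonup y''$, so the second conclusion of Lemma~\ref{le:endotacticity-violating} forces $y'=_w y''$, contradicting the previous line. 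Hence $\cG$ must be endotactic.

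The only subtle point in this argument is verifying that the parallelism hypothesis actually propagates nontriviality of $w$ through the chain $y\ce{->}y'\ce{->}y''$; this is automatic once we note that reaction vectors are nonzero and that $\parallelsum$ (in either orientation) preserves the inner product's nonvanishing with $w$. So no induction or iteration along successors is required; a single application of Lemma~\ref{le:endotacticity-violating} to the immediate successor $y''$ closes the argument.
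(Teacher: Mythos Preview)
Your proposal is correct and follows essentially the same approach as the paper: both argue by contradiction, use Lemma~\ref{le:endotacticity-violating} to place the violating reaction in $\cE^{\RS{lir}}$ and to force $y'=_w y''$, and then combine this with the parallelism hypothesis to reach a contradiction. The only cosmetic difference is the direction in which the contradiction is run---the paper deduces $y=_wy'$ from $y'=_wy''$ and parallelism (contradicting $y<_wy'$), while you deduce $y'\neq_w y''$ from $y<_wy'$ and parallelism (contradicting $y'=_wy''$); these are logically equivalent.
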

\begin{proof}
It suffices to show there exists no $w$-endotacticity violating reaction for any $w\in\mathbb{R}^d\setminus\mathsf{S}_{\cG^{\RS{lir}}}^{\perp}$.
We prove it by contradiction. Suppose $y\ce{->}y'$ is a $w$-endotacticity violating reaction for some $w\in\mathbb{R}^d$. By Lemma~\ref{le:endotacticity-violating}, $y'=_wy''$; moreover, $y=_wy'$, which contradicts that $y\ce{->}y'$ is a $w$-endotacticity violating reaction.
\end{proof}

\begin{example}
Consider the following reaction graph
\begin{center}\begin{tikzcd}[row sep=1em, column sep = 1em]
\cG\colon 0\rar[red]{}&S_1\rar{}&2S_1\rar{}&3S_1\arrow[bend left=15]{ll}{}
\end{tikzcd}
\end{center}
Here edges connecting different components are colored in red (in this case, there is a unique one). Note that $$\cG^{\RS{lir}}\colon 0\ce{->}S_1,$$ which is  \emph{not} endotactic. However, since this is a one-dimensional reaction graph, any other reaction as a bona fide vector is parallel to $0\ce{->}S_1$. By  Corollary~\ref{cor:simplified-sufficient-condition-for-endotacticity}, $\cG$ is endotactic.
\end{example}
Not surprisingly, the assumption in Corollary~\ref{cor:simplified-sufficient-condition-for-endotacticity} is \emph{not} necessary for endotacticity.
\begin{example}
Revisit Example~\ref{ex:Introductory-example}:
\begin{center}
\begin{tikzcd}[row sep=1em, column sep = 1em]
\cG\colon S_2\rar[red]{}&S_1\rar[red]{}&0\rar[red]{}&S_1+S_2&S_3\rar{}&S_4\rar{}&S_5\arrow[bend left=15]{ll}{}
\end{tikzcd}
\end{center}
Note that $\cG$ is endotactic with
\[\cG^{\RS{lir}}\colon S_2\ce{->}S_1\ce{->}0\ce{->}S_1+S_2\] Nevertheless, $S_1\ce{->}0\in\widetilde{\cG}$ while there exists no other reaction in $\cG$ as a bona fide vector parallel to this reaction.
\end{example}
It seems natural to speculate if an analogue of Theorem~\ref{thm:implication-among-three-RNs} exists for strong endotacticity; in other words, if \emph{the existence of a reaction graph $\widehat{\cG}\neq\cG$ such that $\cG^{\RS{lir}} \subseteq\widehat{\cG} \subseteq \cG$ 
implies strong endotacticity of $\cG$}.  Indeed, such an analogue fails to be true.

\begin{example}\label{ex:strong-endotacticity-fails-to-be-preserved}
Consider the following reaction graph
\[\cG\colon 0\textcolor{red}{\ce{->}}S_1\textcolor{red}{\ce{<-}}2S_1\quad 3S_1\ce{<=>}3S_1+S_2\]
It is readily confirmed that
\[\cG^{\RS{lir}}\colon 0\ce{->}S_1\ce{<-}2S_1\]
is strongly endotactic, since as a one-dimensional reaction graph embedded in the real line, 
its ``leftmost'' source points right and its ``rightmost'' source points left \cite[Remark~3.11]{GMS14}. Hence by Theorem~\ref{thm:implication-among-three-RNs}, $\cG$ is endotactic. Nevertheless, $\cG$ is \emph{not} strongly endotactic since, e.g., it is not $(1,0)$-strongly endotactic.
\end{example}

In general that endotacticity of $\cG$ cannot imply endotacticity of its \emph{proper} subgraphs $\widehat{\cG}$ as super
graphs of $\cG^{\RS{lir}}$.
\begin{example}\label{ex:G-star-counterexample}
Consider $$\cG\colon 0\textcolor{red}{\ce{->[]}} S_1\ce{<=>[][]}2S_1$$ It is justifiable by the same argument as in Example~\ref{ex:strong-endotacticity-fails-to-be-preserved} that
 $\cG$ is endotactic with $$\cG^{\RS{lir}}\colon  0\ce{->[]} S_1$$
Now consider the other two proper subgraphs of $\cG$ as super 
graphs of $\cG^{\RS{lir}}$: $$\widehat{\cG}_1\colon 0\ce{->[]} S_1\ce{->[]}2S_1\quad \widehat{\cG}_2\colon 0\ce{->[]} S_1\ce{<-[]}2S_1$$ It is easy to verify that among the three proper sub reaction graphs of $\cG$, neither $\cG^{\RS{lir}}$ nor $\widehat{\cG}_1$ is endotactic while $\widehat{\cG}_2$ is endotactic.
\end{example}

Next, we demonstrate the applicability of Theorem~\ref{thm:implication-among-three-RNs}.

\begin{example}
Consider the following reaction graph
\begin{center}\begin{tikzcd}[row sep=1em, column sep = 1em]
\cG\colon S_3+S_4\arrow[bend right=10]{rr}& 2S_3\lar& 2S_2\rar[red]\lar&S_1+S_2\arrow[bend left=0]{ld}&2S_1\arrow[red]{l}&\hspace{-.7cm}\ce{<=>}S_2+S_3\\
&&S_3\arrow[]{rr}&&S_4\arrow[bend left=0]{lu}
\end{tikzcd}
\end{center}

Note that $\cG$ is of dimension $4$ and hence \cite[Proposition~4.1]{CNP13} does not apply. Nevertheless, it is easy to observe that $$\cG^{\RS{lir}}\colon 2S_2\ce{->[]} S_1+S_2\ce{<-[]} 2S_1$$
as a one-dimensional reaction graph, is endotactic  \cite[Remark~3.11]{GMS14}. By Theorem~\ref{thm:implication-among-three-RNs}, $\cG$ is also endotactic.
\end{example}
\begin{example}
Consider 
\begin{center}\begin{tikzcd}[row sep=1em, column sep = 1em]
\cG\colon 0\arrow[red]{r}&S_1\rar&2S_1\rar&S_1+S_2\rar&2S_3\arrow[bend left=15]{lll}
\end{tikzcd}
\end{center}
Note that $\cG$ is of dimension $3$. Moreover, it is readily verified that  $$\cG^{\RS{lir}}\colon 0\ce{->}S_1$$ is \emph{not}  endotactic. However, $0\ce{->}S_1$, as the unique reaction of $\cG^{\RS{lir}}$, is parallel to $S_1\ce{->}2S_1\in\cE$ as bona fide vectors.
 By Corollary~\ref{cor:simplified-sufficient-condition-for-endotacticity}, $\cG$ is endotactic.
\end{example}

Essentially, the reason why $\cG^{\RS{lir}}$ is not endotactic while $\cG$ may be endotactic is that $\cE\setminus \cE^{\RS{lir}}$ may contain reactions with $w$-maximal sources in $\cV_+$ so that \emph{a $w$-endotacticity violating reaction $y\to y'\in \cE^{\RS{lir}}$ of $\cG^{\RS{lir}}$ may not be a $w$-endotacticity violating reaction of $\cG$}.

Next, we show that endotacticity is preserved under the \emph{joint}  operation. 

\begin{definition}\label{def:joint-of-RN}
Let $\cG_i=(\cV_i,\cE_i)$ for $i=1,2$ be two  reaction graphs, both embedded in $\mathbb{R}^{d}$. We define the \emph{joint}  of two reaction graphs by the following reaction graph:
\[\mathcal{G}_1\cup\mathcal{G}_2=(\cV_1\cup\cV_2,\cE_1\cup\cE_2)\]
\end{definition}
If two reaction graphs are embedded in different Euclidean spaces $\mathbb{R}^{d_i}$ for $i=1,2$, then one can first \emph{lift} both reaction graphs to reaction graphs embedded in $\mathbb{R}^{d}$, where $d=\max\{d_1,d_2\}$, and define their joint as the joint of their lifted reaction graphs.

 \begin{definition}
 Two reaction graphs $\cG_1$ and $\cG_2$ are called \emph{disjoint} if both of their vertex sets and their edge sets are disjoint: $$\cV_1\cap\cV_2=\emptyset,\quad\cE_1\cap\cE_2=\emptyset$$ In this case, we write the joint of the two reaction graphs as $\cG_1\sqcup\cG_2$.
 \end{definition}

\begin{lemma}\label{le:endotacticity-preserved-under-joint}
Let $\cG_i$ be two reaction graphs  embedded in $\mathbb{R}^d$, for $i=1,2$. If both $\cG_1$ and $\cG_2$ are endotactic, then so is their joint $\cG_1\cup\cG_2$.
\end{lemma}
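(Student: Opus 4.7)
The plan is to proceed by contraposition: assume the joint $\cG_1 \cup \cG_2$ is \emph{not} endotactic, and show that then one of $\cG_1$ or $\cG_2$ must fail to be endotactic. First, the edge cases are trivial: if $\cG_1 = \emptyset$ then $\cG_1 \cup \cG_2 = \cG_2$ (and conversely), so I may assume both $\cG_i$ are non-empty.

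If $\cG_1 \cup \cG_2$ is not endotactic, then by Definition~\ref{def:endotacticity} there exist $w \in \mathbb{R}^d$ and a reaction $y \ce{->} y' \in \cE_1 \cup \cE_2$ with $y <_w y'$ and $y \in \supp_w(\cG_1 \cup \cG_2)$. Without loss of generality take $y \ce{->} y' \in \cE_1$. I then want to argue that this very reaction is a $w$-endotacticity violating reaction of $\cG_1$ alone, contradicting endotacticity of $\cG_1$.

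The key step is a monotonicity observation about sources. Since the reaction $y \ce{->} y' \in \cE_1$ satisfies $y' - y \notin w^\perp$ (it has $y <_w y'$, hence is strict), it lies in $\cE_{1,w}$, which makes $y$ a source of $\cG_{1,w}$, i.e.\ $y \in \cV_{1,w,+}$. Moreover, since $\cE_{1,w} \subseteq (\cE_1 \cup \cE_2)_w$, we have the inclusion of source sets
\[
\cV_{1,w,+} \;\subseteq\; (\cV_1 \cup \cV_2)_{w,+}.
\]
Now $y$ is $w$-maximal in the larger set $(\cV_1 \cup \cV_2)_{w,+}$ by hypothesis, and $y$ belongs to the smaller set $\cV_{1,w,+}$; under the total order $<_w$, being maximal in a superset while sitting in the subset forces maximality in the subset. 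Hence $y \in \supp_w \cG_1$, and together with $y <_w y'$ this exhibits $y \ce{->} y'$ as a $w$-endotacticity violating reaction of $\cG_1$, the desired contradiction.

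I do not anticipate any genuine obstacle here; the statement really is a formal consequence of the definition once one records the inclusion $\cV_{i,w,+} \subseteq (\cV_1 \cup \cV_2)_{w,+}$. The only point requiring a sentence of care is distinguishing the two kinds of conditions in ``$w$-endotacticity violating'' (the strict inequality $y <_w y'$, which is intrinsic to the edge, and the maximality $y \in \supp_w(\cdot)$, which depends on the ambient graph); the observation above shows that maximality can only \emph{improve} when one passes to a subgraph containing the edge, so the violation transfers down.
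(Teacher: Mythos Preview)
Your proof is correct and takes essentially the same approach as the paper: both rest on the inclusion $\cV_{i,w,+}\subseteq(\cV_1\cup\cV_2)_{w,+}$ and the observation that $w$-maximality in the larger source set forces $w$-maximality in the smaller one. The only difference is that the paper argues directly (checking that every $w$-maximal source of the joint satisfies the required inequality) whereas you argue by contraposition, which is a purely stylistic distinction.
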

\begin{proof}
Assume w.l.o.g. that neither $\cG_1$ nor $\cG_2$ is empty. It suffices to verify $w$-endotacticity of $\cG_1\cup\cG_2$ for every $w\in\mathbb{R}^d\setminus\mathsf{S}_{\cG_1\cup\cG_2}^{\perp}$. Let $\cG_{i,w}$ be short for $(\cG_i)_{w}$ for $i=1,2$, and \emph{the same abbreviation rule applies to other sets henceforth}.  Note that $(\cG_1\cup\cG_2)_w=\cG_{1,w}\cup\cG_{2,w}\neq\emptyset$. Let $\cV_{i,w,+}$ denote the set of sources of $\cG_{i,w}$ for $i=1,2$. Let $y$ be any $w$-maximal source in $\cV_{1,w,+}\cup\cV_{2,w,+}$. If $y\in\cV_{1,w,+}$, then $y$ is  $w$-maximal in $\cV_{1,w,+}$. By $w$-endotacticity of $\cG_1$, we have $y>_{w}y'$ for all $y\ce{->[]} y'\in\cE_{1,w}$. Analogously, if $y\in\cV_{2,w,+}$, then $y>_{w}y'$ for all $y\ce{->[]} y'\in\cE_{2,w}$. In sum, $y>_{w}y'$ for all $y\ce{->[]} y'\in\cE_{w}=\cE_{1,w}\cup\cE_{2,w}$. This shows $w$-endotacticity of $\cG_1\cup\cG_2$.
\end{proof}
\begin{remark}
Despite $\cG$ has no redundant species, the two sub reaction graphs $\cG_1$ and $\cG_2$ in Lemma~\ref{le:endotacticity-preserved-under-joint} are indeed \emph{allowed to have redundant species}. For instance, consider $\cG=\cG_1\cup\cG_2$ with $\cE=\cE_1\sqcup\cE_2$ and $$\cG_1\colon S_2\ce{->[]} S_1+S_2\ce{<-[]} 2S_1+S_2;\quad \cG_2\colon S_1\to S_1+S_2\ce{<-[]} S_1+2S_2$$ Note that both sub reaction graphs have redundant species and are endotactic as one-dimensional reaction graphs \cite[Remark~3.11]{GMS14}. Hence by Lemma~\ref{le:endotacticity-preserved-under-joint}, $\cG$ is endotactic.
\end{remark}

Endotacticity may also be preserved under \emph{subtraction}.
\begin{lemma}\label{le:endotacticity-preserved-under-subtraction}
Let $\cG$ be a reaction graph  embedded in $\mathbb{R}^d$. Assume $\cG=\cG_1\sqcup\cG_2$ can be  decomposed into two sub reaction graphs $\cG_1$ and $\cG_2$ of disjoint sets of species. Then $\cG$ is endotactic if and only if $\cG_1$ and $\cG_2$ are both endotactic.
\end{lemma}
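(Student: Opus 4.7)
The proof splits naturally along the biconditional. The ``if'' direction is immediate from Lemma~\ref{le:endotacticity-preserved-under-joint}, since $\cG = \cG_1 \sqcup \cG_2$ is in particular the joint $\cG_1 \cup \cG_2$. So the real content is the ``only if'' direction, which I will prove by contraposition: assuming (w.l.o.g.) that $\cG_1$ fails to be endotactic, I will produce a vector $u' \in \mathbb{R}^d$ witnessing that $\cG$ is not endotactic either.

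Let $I_i \subseteq [d]$ denote the index set of species appearing in complexes of $\cG_i$, so that by hypothesis $I_1 \cap I_2 = \emptyset$. Suppose $y \ce{->[]} y' \in \cE_1$ is a $u$-endotacticity-violating reaction of $\cG_1$ for some $u \in \mathbb{R}^d$, i.e., $y <_u y'$ and $y \in \supp_u \cG_1$. I would then define $u' \in \mathbb{R}^d$ by $u'_i = u_i$ for $i \in I_1$ and $u'_i = 0$ for $i \notin I_1$. The key is that the species sets are disjoint: every reaction vector $z' - z$ of $\cG_2$ is supported in $I_2$ and hence satisfies $(z'-z) \cdot u'^T = 0$, so no reaction of $\cE_2$ lies in $\cE_{u'}$. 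Conversely, every reaction $z \ce{->[]} z' \in \cE_1$ has $z'-z$ supported in $I_1$, so $(z'-z) \cdot u'^T = (z'-z) \cdot u^T$. Thus $\cE_{u'} = \cE_{1,u}$ as edge sets, and in particular $\cV_{u',+} = \cV_{1,u,+}$.

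Because $u'$ and $u$ agree on $I_1$ and every complex in $\cV_{1,u,+}$ is supported in $I_1$, the total orders $<_{u'}$ and $<_u$ coincide on $\cV_{u',+} = \cV_{1,u,+}$. Consequently $\supp_{u'} \cG = \supp_u \cG_1$, so $y \in \supp_{u'} \cG$, and the inequality $(y'-y) \cdot u^T > 0$ transfers to $(y'-y) \cdot u'^T > 0$, i.e., $y <_{u'} y'$. Hence $y \ce{->[]} y'$ is a $u'$-endotacticity-violating reaction of $\cG$, contradicting endotacticity of $\cG$. This completes the contrapositive and thus the lemma.

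I expect no serious obstacle here; the main thing to get right is the bookkeeping that identifies $\cV_{u',+}$ with $\cV_{1,u,+}$ and shows that $u'$-maximality within this common source set coincides with $u$-maximality, which is precisely where the disjoint-species hypothesis is used. The construction of $u'$ by zeroing out the coordinates associated with $\cG_2$ cleanly isolates $\cG_1$ inside $\cG$, making the witness produced for $\cG_1$ simultaneously a witness for $\cG$.
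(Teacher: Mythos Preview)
Your proof is correct and follows essentially the same approach as the paper: both invoke Lemma~\ref{le:endotacticity-preserved-under-joint} for the ``if'' direction, and for the ``only if'' direction both zero out the coordinates outside $I_1$ to transfer the witness vector between $\cG_1$ and $\cG$. The only cosmetic differences are that the paper argues directly (endotacticity of $\cG$ implies $u$-endotacticity of $\cG_1$) rather than by contraposition, and views $\cG_1$ as embedded in $\mathbb{R}^{d_1}$ rather than $\mathbb{R}^d$; your framing is arguably cleaner since it avoids the dimension-juggling.
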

\begin{proof}
Assume w.l.o.g. that $\cG_1$ and $\cG_2$ are both non-empty. By Lemma~\ref{le:endotacticity-preserved-under-joint}, it suffices to prove the ``only if'' part. Assume w.l.o.g. that $\cG=\cG_1\sqcup\cG_2$ is endotactic, where $\cG_i=(\cV_i,\cE_i)\neq\emptyset$ for $i=1,2$. W.l.o.g., it suffices to show that $\cG_1$ is endotactic.  Let $\supp \cV_1=\cI_1\subsetneq[d]$ and $\#\cI_1=d_1<d$. Note that $\cG_1$ is a reaction graph of $d_1$ species. 
Next, we will pair each reaction in $\cE_1$ as a bona fide vector in $\mathbb{R}^{d_1}$ with a reaction in $\cE$ as a bona fide vector in $\mathbb{R}^d$. For any $u\in\mathbb{R}^{d_1}\setminus\mathsf{S}^{\perp}_{\cG_1}$, let $w\in\mathbb{R}^d$ be defined as:
 \[w_j=u_j\mathbbm{1}_{\mathcal{I}_1}(j),\quad j\in[d]\] It is easy to observe that $w\in\mathbb{R}^d\setminus\mathsf{S}^{\perp}_{\cG}$. 
 For any $\breve{y}\in\cV_{1,u}$, let $$y_j=\breve{y}_j\mathbbm{1}_{\mathcal{I}_1}(j),\quad j\in[d]$$
Note that $\breve{y}\ce{->}\breve{y}'\in\cE_1$ implies that $y\ce{->}y'\in\cE$; moreover, $$(y-y')\cdot w^T=(\breve{y}-\breve{y}')\cdot u^T\neq0,\quad \forall \breve{y}\ce{->}\breve{y}'\in\cE_{1,u},$$ yielding that $y\ce{->[]} y'\in\cG_w$.  On the other hand, since $\cG_1$ and $\cG_2$ have disjoint sets of species, by the definition of $w$, we have $\supp\cV_2\cap\supp w=\emptyset$ and  $w\in\mathsf{S}_{\cG_2}^{\perp}$. Hence $$\cE_w=\{y\to y'\colon \breve{y}\ce{->[]} \breve{y}'\in\cE_{1,u}\},$$ and 
$y\in\supp_w\cG$ if and only if $\breve{y}\in\supp_u\cG_1$. Since $\cG$ is $w$-endotactic,  for every 
$\breve{y}\in\supp_u\cG_1$ and $\breve{y}\ce{->}\breve{y}'\in\cE_{1,u}$, we have$$(\breve{y}-\breve{y}')\cdot u^T=(y-y')\cdot w^T>0$$ This shows that $\cG_1$ is $u$-endotactic.
\end{proof}

While Lemma~\ref{le:endotacticity-preserved-under-joint} allows the two sub reaction graphs to share complexes and hence species, \emph{disjointness of sets of species}
of the two sub reaction graphs is a crucial assumption for Lemma~\ref{le:endotacticity-preserved-under-subtraction}.
\begin{example}
Consider the following one-species reaction graph $$\cG\colon 0\ce{->[]} S_1\quad 2S_1\ce{<=>[]}3S_1$$
and a decomposition of 
 $\cG$ $$\cG_1\colon 0\ce{->[]} S_1;\quad \cG_2\colon2S_1\ce{<=>[]}3S_1$$ Despite $\cG$ and $\cG_2$ are endotactic, $\cG_1$ is \emph{not}.
\end{example}

\section{Endotacticity of first order reaction graphs}\label{sect:endotacticity-first-order}

Despite the appealing property of endotacticity \cite{P12,CNP13,GMS14} which in certain cases is proved to be sufficient for permanence of \emph{$\kappa$-variable} mass-action systems (where ``$\kappa$-variable'' means edge weights of the reaction graph are allowed to vary in time while remain uniformly bounded away from zero), it remains open in general if endotacticity can be determined by applying finitely many parallel sweep tests 
(i.e., there exists a finite set $\mathcal{U}\subseteq\mathbb{R}^d$ such that $\cG$ is endotactic if it is $\cU$-endotactic). To the best knowledge of the author, such finiteness of parallel sweep tests seems to have been verified only for  reaction graphs embedded in $\mathbb{R}^d$ for $d=1,2$ \cite[Proposition~4.1]{CNP13}. 
In this section, we will show that \emph{endotacticity can be determined by finitely many parallel sweep tests for first order reaction graphs}  (Theorem~\ref{thm:A-endotactic=endotactic}). 
Before presenting Theorem~\ref{thm:A-endotactic=endotactic}, we provide an archetypal example of a first order endotactic reaction graph.
\begin{example}\label{ex:typical-endotactic}
Revisit Example~\ref{ex:Introductory-example}:
\begin{center}\begin{tikzcd}[row sep=1em, column sep = 1em]
\cG\colon S_2\rar{}&S_1\rar{}&0\rar{}&S_1+S_2&S_3\rar{}&S_4\rar{}&S_5\arrow[bend left=15]{ll}
\end{tikzcd}
\end{center}
Let $\cA$ be defined as in \eqref{eq:def-A} below with $d=5$. It is straightforward to verify that $\cG$ is $\cA$-endotactic. Indeed, $\cG$ consists of a weakly connected sub reaction graph containing the zero complex \begin{center}\begin{tikzcd}[row sep=1em, column sep = 1em]
\cG^0\colon S_2\rar{}&S_1\rar{}&0\rar{}&S_1+S_2
\end{tikzcd}
\end{center}
and a sub reaction graph $\cG^{\bullet}=\cG\setminus\cG^0$ which is WRDZ
\begin{center}\begin{tikzcd}[row sep=1em, column sep = 1em]
\cG^{\bullet}\colon S_3\rar{}&S_4\rar{}&S_5\arrow[bend left=15]{ll}
\end{tikzcd}
\end{center}
By an analogue of the criterion for endotacticity given in \cite[Proposition~4.1]{CNP13}, $\cG^0$ is \emph{strongly endotactic}. Since strongly endotactic reaction graphs and weakly reversible reaction graphs are both endotactic \cite{CNP13}, by Lemma~\ref{le:endotacticity-preserved-under-joint}, we know $\cG$ is endotactic. 
\end{example}
Although the argument is not applicable to general first order reaction graphs, it turns out a first order endotactic reaction graph is always the joint of a (possibly empty) strongly endotactic reaction graph and a (possibly empty) WRDZ reaction graph.

Let
\begin{equation}\label{eq:def-A}
\mathcal{A}=\{\pm\sum_{i\in\cI}e_i\colon  \emptyset\neq\cI\subseteq[d]\}
\end{equation}
be a finite set of vectors in $\mathbb{R}^d$.  For any first order reaction graph $\cG$,  let $\cG^0=(\cV^0,\cE^0)$ be the (possibly empty) weakly connected component of $\cG$ containing the zero complex.  Note that $\cG^0\neq\emptyset$ if and only if $0\in\cV$. Let $\cG^{\bullet} \coloneqq \cG\setminus \cG^0$ be the (possibly empty) reaction graph consisting of reactions in $\cE^{\bullet}\coloneqq \cE\setminus\cE^0$ and complexes in $\cV^{\bullet}\coloneqq\cV\setminus\cV^0$.

\begin{theorem}\label{thm:A-endotactic=endotactic}
Let $\cG$ be a first order reaction graph embedded in $\mathbb{N}^d_0$. Assume $\cG$ is $\cA$-endotactic. Then $\cG$ is endotactic. More precisely, $\cG^0$ and $\cG^{\bullet}$ are (possibly empty) endotactic subgraphs of $\cG$ of disjoint sets of species, and $\cG^{\bullet}$ is WRDZ 
while $\cG^0$ is strongly endotactic provided it is non-empty.
\end{theorem}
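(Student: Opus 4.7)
The theorem bundles four claims: (i) $\supp\cV^0\cap\supp\cV^{\bullet}=\emptyset$; (ii) $\cG^{\bullet}$ is weakly reversible and of deficiency zero; (iii) $\cG^0$ is strongly endotactic whenever non-empty; and (iv) $\cG$ itself is endotactic. My plan is to extract (i)--(iii) from the hypothesis by testing $\cA$-endotacticity against carefully chosen $u\in\cA$, and then assemble (iv) from them using the structural preservation lemmas already established.

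The starting observation is that sources of $\cG$ lie in $\{0,e_1,\ldots,e_d\}$, so for any $u=\pm\sum_{j\in\cI}e_j$ the inner product $u\cdot y^T$ on $\cV_+$ takes only the three values $\{-1,0,1\}$. This lets me identify $\supp_u\cG$ and $\cV_{u,+}$ explicitly and collapse $u$-endotacticity into an elementary either/or on reactions. Concretely, for $u=\sum_{j\in\cI}e_j$: either some $e_i$ with $i\in\cI$ is $u$-active, in which case every reaction $e_i\to y'$ with $i\in\cI$ satisfies $\sum_{j\in\cI}y'_j\le 1$; or no such $e_i$ is $u$-active, in which case no reaction out of $0$ or out of $e_i$ with $i\notin\cI$ may satisfy $\sum_{j\in\cI}y'_j>0$. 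A dual pair of rules comes from $u=-\sum_{j\in\cI}e_j$. I expect Lemmas~\ref{le:homogeneity} and~\ref{le:first-order-reactions} to package these dictionaries and to derive, as a corollary, that reactions with source in $\cV^{\bullet}$ must be monomolecular (target some $e_j$) and keep $\supp\cV^{\bullet}$ invariant.

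With the dictionaries in hand, I would prove (i) by contradiction: if some species $i$ lay in both $\supp\cV^0$ and $\supp\cV^{\bullet}$, then iterating the either/or with a judicious choice of $\cI\subseteq\supp\cV^{\bullet}$ forces either a cross-component reaction (disallowed because $\cG^0$ and $\cG^{\bullet}$ are distinct weakly connected components) or an endotacticity violation. For (ii), monomolecularity is the corollary above; weak reversibility is the crux of the entire argument and I would treat it by contradiction too, assuming some $e_i\in\cV^{\bullet}$ sits in a strongly connected component with an escaping reaction, then taking $\cI$ to be the species reachable in $\cG^{\bullet}$ from the escape target and deducing an endotacticity violation from $u=-\sum_{j\in\cI}e_j$ together with the absence of $0$ from $\cV^{\bullet}$. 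Lemma~\ref{le:path-back-to-0} should codify the ``no return'' ingredient here. Deficiency zero then follows from a standard dimension count on a monomolecular weakly reversible graph. For (iii), the presence of $0\in\cV^0$ combined with the dictionaries forces $\supp_u\cG^0$ to contain a $u$-maximal element of $\cV^0_+$ for every $u$, which is exactly strong endotacticity.

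Finally for (iv): $\cG^0$ is endotactic because strong endotacticity implies endotacticity, and $\cG^{\bullet}$ is endotactic by Corollary~\ref{cor:simplified-criterion-for-endotacticity} applied to weak reversibility. Since their species supports are disjoint, Lemma~\ref{le:endotacticity-preserved-under-subtraction} (equivalently, Lemma~\ref{le:endotacticity-preserved-under-joint} in the joint direction) yields endotacticity of $\cG=\cG^0\sqcup\cG^{\bullet}$; Lemma~\ref{le:A-endotactic} likely furnishes the technical bridge packaging the passage from $\cA$-endotacticity to full endotacticity on the two pieces. The main obstacle will be the weak reversibility step of (ii): $\cA$ is a finite test set while potential endotacticity violations are parametrised by a whole hyperplane's worth of directions, so one must show that the signed-sum witnesses in $\cA$ are already enough to block every would-be escape from a strong component of $\cG^{\bullet}$. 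This is precisely where the first-order assumption is genuinely used: sources being $0/1$-vectors means every relevant ``escape direction'' in $\cG^{\bullet}$ is resolved by some $u\in\cA$.
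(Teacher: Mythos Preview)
Your overall architecture matches the paper's: parts (i) and (ii) are indeed packaged by Lemmas~\ref{le:homogeneity}, \ref{le:path-back-to-0}, and \ref{le:first-order-reactions}, and part (iv) follows from (i)--(iii) via Lemma~\ref{le:endotacticity-preserved-under-joint} and Corollary~\ref{cor:simplified-criterion-for-endotacticity}, exactly as you outline. (Lemma~\ref{le:A-endotactic} is not used here---it belongs to the proof of Theorem~\ref{thm:endotactic=weakly-reversible}.)

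The gap is in (iii), and you have mislocated the main obstacle. Weak reversibility of $\cG^{\bullet}$ is \emph{not} the hard part: since complexes of $\cG^{\bullet}$ are single-copy species, taking $\cI$ to be the species outside a $\prec$-minimal strong component and testing $w=\sum_{\ell\in\cI}e_\ell\in\cA$ gives an immediate contradiction (this is Lemma~\ref{le:first-order-reactions}). The genuinely difficult step is showing $\cG^0$ is endotactic for \emph{every} $w\in\mathbb{R}^d$, not just $w\in\cA$. Your sketch for (iii) says only that ``$\supp_u\cG^0$ contains a $u$-maximal element of $\cV^0_+$,'' but that is the \emph{additional} requirement for strong endotacticity on top of endotacticity; you have not addressed why no $w$-endotacticity-violating reaction exists for arbitrary $w$. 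The paper handles this by a three-step contradiction: given a hypothetical violating reaction $y\to y'$ with $y\in\supp_w\cG^0$, one shows (I) $y=e_{i_0}$ with $w_{i_0}<0$ and $0\to y\notin\cE$; (II) $w\le 0$; (III) $e_{i_0}$ cannot be $w$-maximal in $\cV_{w,+}$. Each step exploits the path structure $K\cup L=J=\supp\cV^0$ from Lemma~\ref{le:path-back-to-0}, and a separate two-case argument then upgrades endotacticity to strong endotacticity. Your ``dictionaries'' give $\cA$-endotacticity cleanly, but the passage from the finite test set $\cA$ to all $w\in\mathbb{R}^d$ on $\cG^0$ (where the zero complex and complexes of $\ell_1$-norm $>1$ live) requires this additional structural argument that your proposal does not supply.
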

To prove this result, we need to first establish the following three lemmas.

First, under $\{\mathbf{1},-\mathbf{1}\}$-endotacticity,  we are able to characterize $\cG$ when $\cG^0=\emptyset$.

\begin{lemma}\label{le:homogeneity}
Let $\cG=(\cV,\cE)$ be a non-empty first order reaction graph embedded in $\mathbb{N}^d_0$. Assume $\cG$ is
$\{\mathbf{1},-\mathbf{1}\}$-endotactic. Then $$(1)\ 0\notin\cV\quad \Leftrightarrow\quad (2)\ \cG\ \text{has conservation law}\ \mathbf{1}\quad \Leftrightarrow\quad (3)\ \cG\ \text{is homogeneous},$$ in which case $d>1$, and the set of complexes consists of single-copy species.
\end{lemma}
\begin{proof}
We prove the two bi-implications in a cyclic manner.
\medskip

\noindent $(1)\Rightarrow(2)$. Since $\cG$ is a first order reaction graph without 
the zero complex, every reaction of $\cG$ is of order 1,  and $\cG=\cG^{\bullet}$ is homogeneous. Hence all reactants are $\mathbf{1}$-maximal in $\cV_+$. By $\mathbf{1}$-endotacticity of $\cG$, $$(y'-y)\cdot\mathbf{1}^T\le0,\quad \forall y\ce{->[]} y'\in\cE$$ Next we prove that $\cG$ has conservation law $\mathbf{1}$ by contradiction. Suppose $\mathbf{1}\notin\mathsf{S}_{\cG}^{\perp}$. We will prove that $\cG$ has a $-\mathbf{1}$-endotacticity violating reaction in $\cE$ to achieve the contradiction. Note that $\cG_{-\mathbf{1}}=\cG_{\mathbf{1}}\neq\emptyset$. Moreover, $$\|y\|_1=-y\cdot (-\mathbf{1})^T,\quad \forall y\in\cV_+$$  
Since $\cG$ is homogeneous, every source of $\cG_{-\mathbf{1}}$ is $-\mathbf{1}$-maximal in $\cV_{-\mathbf{1},+}$.
Then every reaction $y\ce{->[]} y'\in\cE_{-\mathbf{1}}$ is a $-\mathbf{1}$-endotacticity violating reaction of $\cG$ since \[(y'-y)\cdot (-\mathbf{1})^T=-(y'-y)\cdot\mathbf{1}^T>0\]

\noindent $(2)\Rightarrow(3)$. Note that $\cG$ has conservation law $\mathbf{1}$ immediately yields that $\|y\|_1=\|y'\|_1$ for every reaction $y\ce{->} y'\in\cE$. Hence $\cG$ is homogeneous consisting of first order reactions, and every complex is a single-copy species. Moreover, $d>1$ as otherwise $\cV$ is a singleton and it would have contradicted with $y\neq y'$ for any reaction $y\ce{->}y'\in\cE$.
\medskip

\noindent $(3)\Rightarrow(1)$. We prove $0\notin\cV$  by contradiction. Suppose $0\in\cV$. If $0\in\cV_+$, then   by homogeneity, $\cV_+=\{0\}$, and hence $\cG=\cG_{\mathbf{1}}=\cG_{-\mathbf{1}}$. By similar argument as in proving the implication $(1)\Rightarrow(2)$, one can show $\cV=\{0\}$, and this contradicts that $\cG$ is a reaction graph. Hence $0\in\cV\setminus\cV_+$,
i.e., $0$ is only a target. In this case, all reactions of $\cG$ are of first order, and thus all reactants are $-\mathbf{1}$-maximal in $\cV_+$. Since $0$ is a target, there must exist a reaction $y\ce{->}0\in\cE$ which is $-\mathbf{1}$-endotacticity violating, contradicting $-\mathbf{1}$-endotacticity of $\cG$. This contradiction yields the conclusion that $0\notin\cV$.
\end{proof}
\begin{remark}
 Without $\mathbf{1}$-endotacticity or $-\mathbf{1}$-endotacticity in Lemma~\ref{le:homogeneity}, homogeneity is \emph{insufficient} for $\cG$ to have conservation law $\mathbf{1}$. For instance, consider $$\cG\colon S_1\ce{->}0,$$ which is $\mathbf{1}$-endotactic while is not $-\mathbf{1}$-endotactic. Note that $\cG$ has no conservation law $\mathbf{1}$. Reversing the reaction of $\cG$ serves a counterexample when $\mathbf{1}$-endotacticity is lost.
\end{remark}

Recall that for $y,y'\in\cV$,  the notation  $y\rightharpoonup y'$ means that there exists a directed path from $y$ to $y'$ in a reaction graph. The lemma below characterizes $\cG$ when $\cG^0\neq\emptyset$. It will be used repeatedly (e.g., also in the proof of Theorem~\ref{thm:endotactic=weakly-reversible}).

\begin{lemma}\label{le:path-back-to-0}
Let $\cG=(\cV,\cE)$ be a first order reaction graph embedded in $\mathbb{N}^d_0$. Assume $\cG$ is $\cA$-endotactic and  $\cG^0\neq\emptyset$. 
Then \begin{equation}\label{eq:sources}
\cV_+^0=\{y\in\cV^0\colon \|y\|_1\le1\}
\end{equation} consists of the zero complex and single-copy species in $\cV^0$. Let  $$J=\{j\in[d]\colon\ e_j\rightharpoonup 0\},\ K=\supp\{y'\in\cV^0 \colon 0\rightharpoonup y'\},\ L=\{\ell\in J\setminus K\colon e_{k}\rightharpoonup e_{\ell}, \forall k\in K\}$$ Then
\begin{equation}\label{eq:support_of_non-zero_sources}
K\neq\emptyset;\quad K\cup L= J = \supp\cV^0
\end{equation}

 In other words, for every $j\in[d]$, there exists a path from $e_j$ to $0$ in $\cG^0$ if and only if either there exists a path from $0$ to a complex $y'\in\cV^0$ with $y'_j>0$ or there exists a path from $0$ to a complex $y'\in\cV^0$ with $y'_{k}>0$ and there exists a path from $e_{k}$ to $e_j$.
 In particular, there exists a path from every non-zero source in $\cV^0_+$ to $0$. 
Moreover, $\cG^0$ and $\cG^{\bullet}$ are subgraphs of $\cG$ of disjoint sets of species.
\end{lemma}
\begin{proof}
In the light of that $\cG^0$ is weakly connected,  \eqref{eq:sources} follows from
\eqref{eq:support_of_non-zero_sources}.

Note that $\cG^{\bullet}$ is a first order reaction graph without the zero complex. Moreover, since $\cG$ is $\mathbf{1}$-endotactic, and all nonzero sources $y$ are $\mathbf{1}$-maximal in $\cV_+$, which further implies that 
 \begin{equation}\label{eq:1-endotactic-inequality}
\|y'\|_1\le\|y\|_1=1,\quad \forall  y'\in\cV\ \text{such that}\ y\ce{->}y'\in\cE
\end{equation}
 In addition, each complex in $\cG^{\bullet}$ is a single copy of one species since its $\ell_1$-norm is 1. Hence $\cG^{\bullet}$  has conservation law $\mathbf{1}$ provided it were not empty. 

It follows from \begin{equation}\label{eq:supp}\supp\cV^0\cap\supp \cV^{\bullet}=\emptyset
\end{equation} that the set of species of $\cG^0$ and the set of species of $\cG^{\bullet}$ are disjoint. To see \eqref{eq:supp}, note that if \eqref{eq:supp} were false, then there exists $e_j\in\cV^1$ for some  $j\in \supp\cV^0=J$ by \eqref{eq:support_of_non-zero_sources}, and hence $e_j\rightharpoonup 0$ by the definition of $J$. This shows that $e_j\in\cV^0$ which leads to a contradiction.

It thus suffices to prove  \eqref{eq:support_of_non-zero_sources}.
 Let  \begin{equation}\label{eq:V-star-0}
\cV^0_*\coloneqq\{y\in\cV^0\colon \|y\|_1>1\}
\end{equation} Since $\cG$ is of first order, $\cV^0_*\subseteq\cV^0\setminus\cV^0_+$. By \eqref{eq:1-endotactic-inequality}, we have
\[0\ce{->}y,\quad \forall y\in\cV^0_*,\]
which yields that $\supp\cV^0_*\subseteq K$.

Then \eqref{eq:support_of_non-zero_sources} would follow from the four steps below. Step I. Prove  $K\neq\emptyset$. Step II: Prove $K\subseteq J$.  Step III: 
Prove $\supp\cV^0= J$. Step IV. Assume $J\setminus K\neq\emptyset$. Prove $L= J\setminus K$.
\medskip

\noindent \underline{Step I}. We prove $K\neq\emptyset$. Suppose $K=\emptyset$. Then  $0\in\cV^0\setminus\cV^0_+$, $\cV_*^0=\emptyset$,  and $\cG$ consists of first order reactions with all non-zero complexes being single-copy species.  Let $w=-\sum_{i=1}^de_i$. Then $e_i$ is $w$-maximal for all $i\in[d]$. By the definition of $J$, there exists a $j_0\in J$ such that $e_{j_0}\ce{->}0$. Note that $e_{j_0}\ce{->}0\in\cE_w$ is a $w$-endotacticity violating reaction since all sources in $\cV_{w,+}$ are $w$-maximal and
\[(0-e_{j_0})\cdot w^T=1>0\]
\medskip

\noindent \underline{Step II}. We prove $K\subseteq J$ by contradiction. By Step I, $K\neq\emptyset$, and hence  $0\in\cV_+^0$. We will show by contradiaction that otherwise there would exist a $w$-endotacticity violating reaction of $\cG$ for some $w\in\mathcal{A}$.  Suppose there exists $0\ce{->} y'\in\cE$ with $y_{k_0}'>0$ for some $k_0\in K\setminus J\subseteq\supp\cV\setminus J$.  Let $w=\sum_{i\in\supp\cV\setminus J}e_{i}$. We will show $0\ce{->}y'$ is a $w$-endotacticity violating reaction of $\cG$.

For any $z\in \cV_+\setminus (\{0\}\cup\{e_{j}\}_{j\in J})$, $z\ce{->[]} z'\in\cE$, by the definition of $J$, we have $$z'\in \cV\setminus (\{0\}\cup\{e_{j}\}_{j\in J});$$ additionally, $\|z\|_1=\|z'\|_1=1$. Hence $$(z'-z)\cdot w^T=\|z\|_1-\|z'\|_1=0$$ This shows $\cV_{w,+}\subseteq \{0\}\cup\{e_{j}\}_{j\in J}$. Since every complex in $\{0\}\cup\{e_{j}\}_{j\in J}$ is orthogonal to $w$, we know every source in $\cV_{w,+}$ is $w$-maximal in $\cV_{w,+}$. Then $0\to y'\in\cE_w$ with $0\in\supp_w\cG$ is a $w$-endotacticity violating reaction of $\cG$ since $$(y'-0)\cdot w^T=\sum_{i\in\supp\cV\setminus J}y'_{i}\ge y_{k_0}'>0$$ 
\medskip

\noindent \underline{Step III}. We prove $\supp\cV^0= J$ based on Steps I and II. Since the reverse inclusion $ J\subseteq\supp\cV^0$ holds trivially, it suffices to show
$\supp\cV^0\subseteq J$. We prove it by contradiction. Suppose $\supp\cV^0\setminus J\neq\emptyset$. Let $$\cV^{\RS{lir}}\coloneqq\{y\in\cV^0\colon \supp y\setminus J\neq\emptyset\}$$ Then for every $y\in\widetilde{\cV}$, there exists an  $i_0\in\supp\cV^0\setminus J$ such that $y_{i_0}>0$. Since  $K\subseteq J$, \begin{equation}\label{eq:no-path-connecting-0}
0\not\rightharpoonup y,\quad y\not\rightharpoonup 0
\end{equation}
\emph{While \eqref{eq:no-path-connecting-0} cannot imply $y\in\cV^1$ to reach a contradiction as two vertices within a weakly connected component may not be connected by a directed path in either direction,} it does yield that $y=e_{i_0}$, otherwise $y\in\cV^0_*$ and $0\rightharpoonup y$. This shows that $$\cV^{\RS{lir}}=\{e_{i}\}_{i\in\supp\cV^0\setminus J}$$
In other words, $\cV^{\RS{lir}}$ contains all single-copy species that does not connect to the zero complex. Since $\cV^0_*$ contains no source, we have $$z\rightharpoonup 0,\quad \forall z\in\cV^0\setminus(\cV^{\RS{lir}}\cup\cV^0_*\cup\{0\})\subseteq\{e_j\}_{j\in J}$$
In addition, since $K\subseteq J$ and each vertex in $\cV^0_*$ has in-degree one with the unique edge to that vertex in $\cG$ from $0$, we have $\supp (\cV^0\setminus\widetilde{\cV})= J$ and  $$\supp (\cV^0\setminus\widetilde{\cV})\cap\supp\widetilde{\cV}=\emptyset;$$ moreover, $\cG^0$ becomes ``bipartite'' in the sense that vertices in $\cV^{\RS{lir}}$ do not connect to those in $\cV^0\setminus\widetilde{\cV}$: $$y\not\rightharpoonup y',\quad \forall y\in\widetilde{\cV},\ \forall y'\in\cV^0\setminus\widetilde{\cV}$$
Hence by weak connectivity of $\cG^0$, there exists $e_{i_1}\in\widetilde{\cV}$ such that $z_*\ce{->}e_{i_1}$, for some $z_*\in\cV^0\setminus\widetilde{\cV}$. Note that $z_*\notin\{0\}\cup\cV^0_*$ and hence $z_*=e_{j_1}$ for some $j_1\in J$.  Let $w=\sum_{i\in \supp\cV^0\setminus J}e_{i}$. Since $$y\cdot w^T=1,\quad \forall y\in\widetilde{\cV},$$ we have $$\cE_w\subseteq\{y\ce{->}y'\in\cE\colon y\in\cV^0\setminus\widetilde{\cV}\},\quad \cV_{+,w}\subseteq\cV^0\setminus\widetilde{\cV}$$
which further implies by $\supp (\cV^0\setminus\widetilde{\cV}) = J$ that
\begin{equation}\label{eq:orthogonality}
y\cdot w^T=0,\quad \forall y\in\cV_{+,w}\end{equation} Moreover, for every $0\ce{->}y'\in\cE$, \[(y'-0)\cdot w^T=0-0=0,\] since $\supp y'\subseteq J$. Hence $0\notin\cV_{+,w}$, and every source $y\in\cV_{+,w}$ is $w$-maximal in $\cV_{+,w}$ owing to \eqref{eq:orthogonality}.  Indeed, $e_{j_1}\ce{->}e_{i_1}\in\cE_w$ is a $w$-endotacticity violating reaction since $$(e_{i_1}-e_{j_1})\cdot w^T=e_{i_1}\cdot w^T=1>0$$ This contradicts that $\cG$ is $\cA$-endotactic since $w\in\cA$.

\medskip

Based on Steps I-III, we have shown that $\supp\cV^0= J$, which immediately yields \begin{equation}\label{eq:sources-of-G^0}
\cV^0\setminus\cV^0_*=\cV^0_+=\{0\}\cup\{e_j\}_{j\in J}\end{equation}
\medskip

\noindent \underline{Step IV}. Assume  $J\setminus K\neq\emptyset$. We prove $L=J\setminus K$ 
by contradiction. 
Suppose $J\setminus (K\cup L)\neq\emptyset$. 
Let $w=-\sum_{j\in J\setminus  (K\cup L)}e_{j}\in\cA$. We will prove there exists a $w$-endotacticity violating reaction. Recall that based on $\supp\cV^0= J$, we have shown that $\cG^0$ and $\cG^{\bullet}$ have disjoint sets of species, which further implies that $\cE_w\subseteq\cE^0$.

Note that for any reaction $y\ce{->}y'\in\cE$, if $y\in\{0\}\cup\{e_i\}_{i\in K\cup L}$, then by the definitions of $K$ and $L$, we have $\supp y'\subseteq K\cup L$. Hence both $y$ and $y'$ are orthogonal to $w$. This yields that $y\ce{->}y'\notin \cE_w$, and thus $\cV_{w,+}\subseteq\{e_j\}_{j\in J\setminus (K\cup L)}$. Since all elements in $\{e_j\}_{j\in J\setminus (K\cup L)}$ are $w$-equal, every source in $\cV_{w,+}$ is $w$-maximal. On the other hand, since $e_j\rightharpoonup0$ for every $j\in J\setminus (K\cup L)$, one can show by induction that there exists a reaction $e_{j_0}\ce{->}y'\in\cE$ with $j_0\in J\setminus (K\cup L)$ and $y'=0$ or $y'=e_{i_0}$ for some $i_0\in K\cup L$. In either case, $e_{j_0}\ce{->}y'\in\cE_w$ since $$(y'-e_{j_0})\cdot w^T=0-(-1)=1>0$$ This further implies  $e_{j_0}\ce{->}y'$ is a $w$-endotacticity violating reaction since $e_{j_0}$ is $w$-maximal in $\cV_{w,+}$.

\medskip

Now we complete the proof.
\end{proof}

Note that $L$ can be a proper subset of $J$, as evidenced by the example below (see also Example~\ref{ex:WRDZ-realization}).
\begin{example}\label{ex:semi-connectivity}
 Consider the following reaction graph
\[\cG\colon S_1\ce{->}S_2\ce{->}0\ce{->}2S_1\]
Note that $\cG=\cG^0$ with $J=\{1,2\}$, $K=\{1\}$,  $L=\{2\}$, and the reaction graph is endotactic by \cite[Proposition~4.1]{CNP13}. In contrast, consider  similar reaction graphs
\begin{center}\begin{tikzcd}[row sep=1em, column sep = 1em]
\cG_1\colon S_1\rar{}&S_2\rar{}&0\rar{} &2S_2&& \cG_2\colon S_2\rar{}&0\arrow[bend left=15]{rr}{}&S_1\lar{}&2S_1
\end{tikzcd}
\end{center}
It is straightforward to see that for either reaction graph, $K\cup L\subsetneq J$, consistent with that neither of the reaction graphs is endotactic.
\end{example}
\begin{lemma}\label{le:first-order-reactions}
Let $\mathcal{G}=(\cV,\cE)$ be a non-empty  first order reaction graph  embedded in $\mathbb{N}^d_0$.  Assume $\cG$ is $\cA$-endotactic.  Then $\{e_i\}_{i\in[d]}\subseteq\mathcal{V}_+\subseteq\{0\}\cup\{e_i\}_{i\in[d]}$, and whenever $\cG^{\bullet}\neq\emptyset$,  $\cG^{\bullet}$ is homogeneous and WRDZ, and has conservation law $\mathbf{1}$ confined to its own set of  species. In particular, 
$\mathcal{V}_+=\{0\}\cup\{e_i\}_{i\in[d]}$ if and only if $0\in\cV$.
\end{lemma}
\begin{proof}
First, \emph{assume $\cG^{\bullet}$ is weakly reversible}. Since $\cG$ has no redundant species,  it follows from 
\eqref{eq:sources-of-G^0} that $$\{e_i\}_{i\in[d]}\subseteq\cV_+ = \cV^{\bullet}\cup\cV^0_+ \subseteq\{e_i\}_{i\in[d]\setminus J}\cup(\{0\}\cup\{e_j\}_{j\in J})=\{0\}\cup\{e_i\}_{i\in[d]},$$ 
In particular, $\mathcal{V}_+=\{0\}\cup\{e_i\}_{i\in[d]}$ if and only if 
$0\in\cV$. Moreover, it also follows from  Lemma~\ref{le:homogeneity} and Lemma~\ref{le:path-back-to-0} that whenever $\cG^{\bullet}\neq\emptyset$, $\cG^{\bullet}$ is homogeneous with conservation law $\mathbf{1}$ confined to its own set of species. Since $\cG^{\bullet}$ is monomolecular and weakly reversible, it is of deficiency zero.

It then remains to show weak reversibility of $\cG^{\bullet}$. We  will prove weak reversibility by contraposition. Since $\cG^0$ is weakly connected, assume w.l.o.g. that $\cG=\cG^{\bullet}$.  We further assume that $\cG$ is weakly connected; otherwise, the following arguments would apply to each of its weakly connected but not strongly connected components to yield a contradiction.
 Suppose $\cG$ is \emph{not} weakly reversible. Then $\cG$ can be decomposed into $k>1$ strongly connected components which are weakly connected.  Further assume w.l.o.g. (up to a permutation) that $\cG^1=(\cV^1,\cE^1)$ is a $\prec$-minimal component of $\cG$. Let $w=\sum_{e_{\ell}\in\cV\setminus\cV^1}e_{\ell}\in\cA$. Then  $\cE_w\neq\emptyset$ consists of reactions with one of the source and the target in $\cG^1$ and the other in a different strongly connected component that is weakly connected to $\cG^1$.  Furthermore, since $\cG^1$ is $\prec$-minimal, we have $\cV_{w,+}\subseteq\cV^1$.  Since $\cV$ consists of single-copy species due to Lemma~\ref{le:homogeneity}, it is straightforward to verify that every source in $\cV_{w,+}$ is $w$-maximal in $\cV_{w,+}$. This further implies that every reaction $y\ce{->}y'\in\cE_w$ is a  $w$-endotacticity violating reaction of $\cG$ since $$(y'-y)\cdot w^T=y'\cdot w^T=\|y'\|_1=1>0,$$
which contradicts  the $\cA$-endotacticity of $\cG$.
\end{proof}

Now we are ready to prove Theorem~\ref{thm:A-endotactic=endotactic}.

\begin{proof}
Assume w.l.o.g. that $\cG\neq\emptyset$. By Lemma~\ref{le:first-order-reactions}, $\cG^{\bullet}$ is WRDZ and hence endotactic; moreover, by Lemma~\ref{le:path-back-to-0}, $\cG^0$ and $\cG^{\bullet}$ have  disjoint sets of species. Since $\cG^0$ and $\cG^{\bullet}$ are disjoint sub reaction graphs of $\cG$, in the light of Lemma~\ref{le:endotacticity-preserved-under-joint}, it suffices to show $\cG^0$ is strongly endotactic provided it is non-empty. Assume w.l.o.g. that $\cG=\cG^0\neq\emptyset$. We will first show $\cG$ is endotactic and then show $\cG$ is strongly endotactic.

First, we prove endotacticity by contraposition. Suppose there exists a $w$-endotacticity violating reaction $y\ce{->[]} y'\in\cE$ for some $w\in\mathbb{R}^d$. Let $\cV_*^0$ be defined in \eqref{eq:V-star-0}. Then we have $\cV_*^0\neq\emptyset$. 
 Since otherwise, $\cG$ is monomolecular, and it follows from Lemma~\ref{le:path-back-to-0} that $J=K=[d]$ and  every non-zero complex connects to zero and vice versa, which implies that $\cG$ is strongly connected and hence is endotactic by Corollary~\ref{cor:simplified-criterion-for-endotacticity}.  We will achieve a contradiction in three steps.
\medskip

\noindent\underline{Step I}. We will show that $y=e_{i_0}$ for some $i_0\in[d]$ such that $w_{i_0}<0$ and $0\ce{->} y\notin\cE$. This will be achieved in three steps.

First, we will prove $y\neq0$ by repeatedly using contradiction argument. Suppose $y=0$. Then $y'\in\cV^0_*$. Since otherwise, $y'=e_{j_0}$ for some $j_0\in[d]$. By Lemma~\ref{le:path-back-to-0}, $y$ and $y'$ are in the same strongly connected component. This contradicts that $y\ce{->}y'$ is an endotacticity violating reaction due to Lemma~\ref{le:endotacticity-violating}.

Since $0\ce{->}y'$ is a $w$-endotacticity violating reaction, we have  $0<_wy'$, which implies $\supp y'\cap\supp_+w\neq\emptyset$.
We will show that this would contradict that $0$ is $w$-maximal in $\cV_{w,+}$. Choose $k_0\in\supp y'\cap \supp_+w$. Since $\supp y'\subseteq K$, by Lemma~\ref{le:path-back-to-0}, $e_{k_0}\rightharpoonup 0$. Hence by induction, one can show that there exists $j_0\in[d]$ such that $e_{j_0}\ce{->}0$, and either $e_{k_0}\rightharpoonup e_{j_0}$ or ${k_0}=j_0$. Since $0$ is $w$-maximal in $\cV_{w,+}$ while $e_{k_0}>_w0$, we know $e_{k_0}\notin\cV_{w,+}$. This implies by induction that $e_{k_0}=_we_{j_0}$, which further implies $j_0\in\supp_+w$ and hence $e_{j_0}>_w0$. This contradicts $0$ is $w$-maximal in $\cV_{w,+}$ since $e_{j_0}\ce{->}0\in\cE_w$.  So far we have shown that $y\neq0$. Hence $y=e_{i_0}$ for some $i_0\in[d]$.

Next, we will show $w_{i_0}<0$. By Lemma~\ref{le:path-back-to-0},  $y'\notin\cV^0_*$, which further implies by  Lemma~\ref{le:path-back-to-0} that either $y'=0$ or $y'\rightharpoonup0$.  By Lemma~\ref{le:endotacticity-violating} again,  in either case we have $e_{i_0}<_wy'=_w0$ which implies $w_{i_0}<0$.

Finally, we will prove that $0\ce{->} y\notin\cE$. Since $y\neq0$, we have $y'=0$ or $y'=e_{j_1}$ for some $j_1\in [d]=J$. By Lemma~\ref{le:path-back-to-0},  $e_{j}\rightharpoonup 0$ for all $j\in J$, which further implies that $y\rightharpoonup 0$. Suppose $0\ce{->} y\in\cE$. Then $y$ and $y'$ are in the same strongly connected component. This contradicts Lemma~\ref{le:endotacticity-violating}.

\medskip

\noindent\underline{Step II}. We will prove $w\le0$. Let $i_0$ be defined as in Step I. We will show $w\le0$ by contradiction. 
Suppose $\supp_+w\neq\emptyset$. We will show $e_{i_0}$ is not $w$-maximal in $\cV_{w,+}$ to achieve a contradiction. From Step I, we know $\supp_-w\neq\emptyset$. 
It follows from Lemma~\ref{le:path-back-to-0} that $e_j\rightharpoonup0$ for every $j\in[d]=J$. By induction, one can show that there exists $z\in\{e_{\ell}\}_{\ell\in\supp_+w}$ and $z'\in\{0\}\cup\{e_{\ell}\}_{\ell\in J\setminus\supp_+w}$ such that $z\ce{->}z'\in\cE$. Then \[(z'-z)\cdot w^T<0,\] which yields that $z\in\cV_{w,+}$. Note that $z>_we_{i_0}$ which implies that $e_{i_0}$ is not $w$-maximal in $\cV_{w,+}$. 
\medskip

\noindent\underline{Step III}. Let $i_0$ be defined as in Step I and $K$ defined as in Lemma~\ref{le:path-back-to-0}. We will show that $e_{i_0}$ is \emph{not} $w$-maximal in $\cV_{w,+}$, which would contradict that $y\ce{->}y'$ is a $w$-endotacticity violating reaction. It suffices to show $\{0\}\cup\{e_j\}_{j\in J\setminus\supp_-w}\cap\cV_{w,+}\neq\emptyset$ since $$y>_we_{i_0},\quad \forall y\in\{0\}\cup\{e_j\}_{j\in J\setminus\supp_-w}$$ From Step II, we know $\supp w=\supp_-w$.

If $K\cap\supp w\neq\emptyset$, then choose  $k_1\in K\cap\supp w$. By Lemma~\ref{le:path-back-to-0}, there exists $0\to z\in\cE$ with $z_{k_1}>0$. Based on Step II, $$(z-0)\cdot w^T=\sum_{j\in\supp z\cap\supp w}z_jw_j\le z_{k_1}w_{k_1}<0$$ This shows $0\to z\in\cE_w$ and hence $0\in\cV_{w,+}$.

If $K\cap\supp w=\emptyset$, by Lemma~\ref{le:path-back-to-0}, $i_0\in\supp w\subseteq L\neq\emptyset$, and $$e_k\rightharpoonup e_{\ell},\quad \forall \ell\in \supp w,\ \forall k\in K$$
One can show by induction that there exists $\tilde{y}\in\{0\}\cup\{e_j\}_{j\in J\setminus\supp w}$ and $i_1\in\supp w$  such that $\tilde{y}\ce{->}e_{i_1}\in\cE$. Note that
\[(e_{i_1}-\tilde{y})\cdot w^T=w_{i_1}<0,\]
which implies that $\tilde{y}\ce{->}e_{i_1}\in\cE_w$ and $\tilde{y}\in\cV_{w,+}$. 
\medskip

So far we have shown that $\cG$ is endotactic.

Next, we will show that $\cG$ is \emph{$w$-strongly endotactic} for every $w\in\mathbb{R}^d\setminus\mathsf{S}_{\cG}^{\perp}$. Since $\cG$ is $w$-endotactic, it suffices to show that $\cV_{w,+}$ contains a $w$-maximal source in $\cV_+$. We will prove it in two cases. Let $J,\ K,\ L$ be as defined in Lemma~\ref{le:path-back-to-0}. Recall from  Lemma~\ref{le:path-back-to-0} that $J=[d]$ and $\cV_+=\{0\}\cup\{e_j\}_{j\in J}$.
\medskip

\noindent \underline{Case I.} $\supp_+w\neq\emptyset$. Let $I=\{i\colon  w_i=\max\limits_{j\in\supp_+w} w_j\}$. Then $\{e_i\}_{i\in I}$ consist of all $w$-maximal sources in $\cV_+$. By Lemma~\ref{le:path-back-to-0} and induction,  we know there exists $e_{i_0}\in I$ and $y\in\{0\}\cup\{e_j\}_{j\in J\setminus I}$ such that $e_{i_0}\ce{->}y\in\cE$. Note that $e_{i_0}\ce{->}y\in\cE_w$ since \[(y-e_{i_0})\cdot w^T<0\] This shows that $e_{i_0}\in\cV_{w,+}$.
\medskip

\noindent \underline{Case II.} $\supp_+w=\emptyset$.  Since $w\neq0$, we have $\supp_-w\neq\emptyset$. Then $\{0\}\cup\{e_j\}_{j\in J\setminus\supp_-w}$ consist of all $w$-maximal sources in $\cV_+$. We will prove by contraposition that $\cV_{w,+}$ contains an element in $\{0\}\cup\{e_j\}_{j\in J\setminus\supp_-w}$. Suppose $\cV_{w,+}\cap \bigl(\{0\}\cup\{e_j\}_{j\in J\setminus\supp_-w}\bigr)=\emptyset$, i.e., $\cV_{w,+}\subseteq\{e_j\}_{\supp_-w}$. Since $0\notin\cV_{w,+}$, by the definition of $K$, we know $K\subseteq J\setminus\supp_-w$, which further implies that $K\neq J,\ \supp_-w\neq J$, and by Lemma~\ref{le:path-back-to-0}, $\supp_-w\subseteq L$. By Lemma~\ref{le:path-back-to-0} and induction,  we know that there exists $e_{k_0}\ce{->}e_{\ell_0}\in\cE$ for some $k_0\in K$ and $\ell_0\in \supp_-w$. Analogous to Case I, one can show that $e_{k_0}\ce{->}e_{\ell_0}\in\cE_w$ and $e_{k_0}\in\cV_{w,+}$, which contradicts with $\cV_{w,+}\cap \{e_j\}_{j\in J\setminus\supp_-w}=\emptyset$.

\end{proof}

\begin{remark}\label{re:A-endotactic=endotactic}
\begin{enumerate}
\item[(i)] For higher order endotactic reaction graphs $\cG$, \emph{homogeneity of $\cG$ may neither imply zero deficiency nor weak reversibility}. For instance, consider the second order reaction graph $2S_1\to S_1+S_2\ce{<-[]} 2S_2$, which is endotactic and homogeneous, but is of deficiency 2 and not weakly reversible. Nevertheless, it does have a WRDZ strong realization: $2S_1\ce{<=>[][]}2S_2$.
\item[(ii)] Different from the criterion \cite[Proposition~4.1]{CNP13} for $2$-dimensional   reaction graphs, the test set $\mathcal{A}$ for first order reaction graphs is \emph{independent} of the reaction graph.
\item[(iii)] $\cA$ seems to be a \emph{minimal} test set for endotacticity of first order reaction graphs, (at least for those of \emph{few} species) in the sense that for any proper subset $\widetilde{\cA}$ of $\cA$, $\widetilde{\cA}$-endotacticity does not imply endotacticity. For instance, for $d=1$, $\{1\}$ and $\{-1\}$ are the only two non-empty proper subsets of $\cA=\{1,-1\}$. However, $0\ce{->[]} S_1$ is $-1$-endotactic but not $1$-endotactic and $S_1\to 0$ is $1$-endotactic but not $-1$-endotactic. This shows $\cA$ is minimal in the above sense in this case. In general it could be \emph{non-trivial} to determine a minimal test set for endotacticity, even for first order reaction graphs.
\item[(iv)] Based on Theorem~\ref{thm:A-endotactic=endotactic} and Lemma~\ref{le:path-back-to-0}),  it has been proved in a companion work \cite{X24b} that \emph{every first order endotactic stochastic mass-action system is essential and exponentially ergodic}.
\end{enumerate}\end{remark}

Let $\cG$ be a first order reaction graph embedded in $\mathbb{N}^d_0$. Define $\cG^{\spadesuit}=(\cV^{\spadesuit},\cE^{\spadesuit})$ by $$\cE^{\spadesuit}=\cE^*\cup\{0\to S_k\}_{k\in K}\quad \text{and}\quad \cV^{\spadesuit}=\{y,y'\colon y\ce{->[]} y'\in\cE^{\spadesuit}\},$$  where  $K$ is defined as in  Lemma~\ref{le:path-back-to-0} for reaction graph $\mathcal{G}$.  Note that $\cG^{\spadesuit}$ is monomolecular and is of deficiency zero.

Next, we define strong realization of \emph{reaction graphs}.
\begin{definition}\label{def:weighted-graph-equivalence}
Let $\cG_i=(\cV_i,\cE_i)$ for $i=1,2$ be two non-empty  weighted  reaction graphs of the same set of sources. We say one reaction graph is a \emph{strong realization} of the other if \begin{equation}\label{eq:rate-constant-source-outflux}
\sum_{y\ce{->[]} y'\in\cE_1}\kappa_{y\ce{->[]} y'}^{(1)}(y'-y) = \sum_{y\ce{->[]} y'\in\cE_2}\kappa_{y\ce{->[]} y'}^{(2)}(y'-y),\quad \forall y\in\cV_+,
\end{equation} holds.
\end{definition}

\begin{definition}\label{def:unweighted-graph-equivalence}
Let $\cG_i=(\cV_i,\cE_i)$ for $i=1,2$ be two non-empty  \emph{unweighted}  reaction graphs of the same set of sources. We say
one reaction graph \emph{has the capacity to be a strong realization} of the other if there exist two collections of edge weights $\{\kappa_{y\ce{->}y'}^{(i)}\colon y\ce{->}y'\in\cE_i\}$ for $i=1,2$ such that one reaction graph is a strong realization of the other under these weights. With slight abuse of the term ``strong realization'', for two unweighted reaction graphs,
we say one is a \emph{strong realization} of the other for short if one has the capacity to be a \emph{strong realization} of the other.
\end{definition}

Before presenting the generic result on WRDZ strong realization of a first order endotactic reaction graph, let us revisit Example~\ref{ex:semi-connectivity} to have some intuition on why it makes sense to expect $\cG^{\spadesuit}$ to be WRDZ.
\begin{example}
Revisit Example~\ref{ex:semi-connectivity}:
\[\cG\colon S_1\ce{->}S_2\ce{->}0\ce{->}2S_1\]
By definition, 
\begin{center}\begin{tikzcd}[row sep=1em, column sep = 1em]
\cG^{\spadesuit}\colon S_1\rar{}&S_2\rar{}&0\arrow[bend left=15]{ll}
\end{tikzcd}
\end{center}
It is easy to observe it is a monomolecular weakly reversible reaction graph and hence is of deficiency zero. Moreover, by Definition~\ref{def:unweighted-graph-equivalence} and tuning the edge weight of $0\ce{->}S_1$ in $\cG^{\spadesuit}$ in accordance with that of $0\ce{->} 2S_1$ in $\cG$, it is straightforward to see that $\cG^{\spadesuit}$ is a strong realization of $\cG$.
\end{example}
\begin{theorem}\label{thm:endotactic=weakly-reversible}
Let $\cG$ be a  first order reaction graph embedded in $\mathbb{N}^d_0$. Then
\[\cG\ \text{is endotactic}\ \Leftrightarrow \cG^{\spadesuit}\ \text{is endotactic}\ \Leftrightarrow \cG^{\spadesuit}\ \text{is WRDZ}\]
Moreover, if $\cG\neq\emptyset$, then $\cG^{\spadesuit}$ is a strong realization of $\cG$.
\end{theorem}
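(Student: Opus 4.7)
The case $\cG=\emptyset$ is vacuous since $\cG^{\spadesuit}=\emptyset$, so assume $\cG\neq\emptyset$. The plan is to close the cycle
\[
\cG\text{ endotactic}\ \Longrightarrow\ \cG^{\spadesuit}\text{ is WRDZ}\ \Longrightarrow\ \cG^{\spadesuit}\text{ endotactic}\ \Longrightarrow\ \cG\text{ endotactic},
\]
and to verify the strong-realization assertion separately at the end.

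The heart of the proof is the first implication. Assuming $\cG$ endotactic, I would invoke Theorem~\ref{thm:A-endotactic=endotactic} to write $\cG=\cG^0\sqcup\cG^{\bullet}$ with disjoint species sets, where $\cG^{\bullet}$ is already WRDZ and $\cG^0$ (if nonempty) is strongly endotactic with $\cV_+^0=\{0\}\cup\{e_j\}_{j\in J}$. By \eqref{eq:1-endotactic-inequality} together with Lemma~\ref{le:path-back-to-0}, every reaction whose source has $\ell_1$-norm equal to the order $r$ has its target in $\{0\}\cup\{e_i\}_{i\in[d]}$, so $\cE^{\spadesuit}$ is monomolecular and decomposes as $\cG^{\spadesuit,0}\sqcup\cG^{\bullet}$. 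The remaining task is to verify that $\cG^{\spadesuit,0}$ is strongly connected. Each $e_j$ with $j\in J$ admits a path $e_j\rightharpoonup 0$ in $\cG^0$ whose intermediate vertices lie in $\cV_+^0$ (since complexes in $\cV_*^0$ have only incoming edges), and every such path consists of monomolecular reactions in $\cE^*\cap\cE^0\subseteq\cE^{\spadesuit,0}$. Conversely, from $0$ the added edges $\{0\to e_k\}_{k\in K}$ reach all $e_k$ with $k\in K$ directly, and any $e_\ell$ with $\ell\in L=J\setminus K$ is reached via one added edge followed by the monomolecular path $e_k\rightharpoonup e_\ell$ in $\cG^0$ supplied by the definition of $L$. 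Strong connectivity yields weak reversibility, and monomolecular plus weakly reversible yields deficiency zero by the standard component-wise computation $n-(n-1)-1=0$, additive over the two species-disjoint parts.

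The second implication, $\cG^{\spadesuit}$ WRDZ $\Rightarrow$ $\cG^{\spadesuit}$ endotactic, is immediate from Corollary~\ref{cor:simplified-criterion-for-endotacticity}. For the third, $\cG^{\spadesuit}$ endotactic $\Rightarrow$ $\cG$ endotactic, I would argue contrapositively and, in view of Theorem~\ref{thm:A-endotactic=endotactic}, check only $\cA$-endotacticity. Suppose $y\to y'\in\cE$ is a $w$-endotacticity-violating reaction for some $w\in\cA$. Sign-definiteness of $\cA$ (either $w\ge0$ or $w\le0$) drives a short case analysis: if $y=e_i$ is a single-copy source, then $y\to y'$ already lies in $\cE^*\subseteq\cE^{\spadesuit}$, and the sign of $w$ combined with $w$-maximality of $e_i$ in $\cV_{w,+}^{\cG}$ forces $e_i$ to remain $w$-maximal in $\cV_{w,+}^{\spadesuit}$, except in a subcase where $0$ is a strictly larger source in $\cG^{\spadesuit}$, in which subcase an added edge $0\to e_k$ with $k\in K\cap\supp_+ w$ itself provides the violation; if $y=0$, pick $k\in\supp y'\cap\supp_+ w$ (nonempty because $0<_w y'$), so $k\in K$ by the definition of $K$, and $0\to e_k\in\cE^{\spadesuit}$ is $w$-endotacticity-violating.

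For the strong realization, observe $\cV_+^{\spadesuit}=\cV_+$, since the nonzero sources of $\cE^{\spadesuit}$ coincide with the norm-$r$ sources of $\cG$ and $0\in\cV_+^{\spadesuit}$ iff $K\neq\emptyset$ iff $0\in\cV_+$. At each nonzero source $e_i$, every reaction out of $e_i$ in $\cG$ also lies in $\cE^*\subseteq\cE^{\spadesuit}$, so retaining the original weights matches fluxes. At the source $0$ (when present), set
\[
\kappa^{\spadesuit}_{0\to e_k}\;=\;\sum_{0\to y'\in\cE^0}\kappa^{\cG}_{0\to y'}\,y'_k,\qquad k\in K;
\]
since $\supp y'\subseteq K$ for every $0\to y'\in\cE^0$, summing gives
\[
\sum_{k\in K}\kappa^{\spadesuit}_{0\to e_k}\,e_k\;=\;\sum_{0\to y'\in\cE^0}\kappa^{\cG}_{0\to y'}\,y',
\]
matching the flux out of $0$ in $\cG$ and verifying~\eqref{eq:rate-constant-source-outflux}. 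I expect the third implication to be the main obstacle: transferring an endotacticity violation from $\cG$ to $\cG^{\spadesuit}$ is delicate precisely because reactions out of $0$ in $\cG$ can have arbitrary-norm targets whereas those in $\cG^{\spadesuit}$ are monomolecular, and sign-definiteness of the vectors in $\cA$ is the essential ingredient that keeps the case analysis tractable.
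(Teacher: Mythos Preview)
Your cyclic strategy, the argument for the first implication (strong connectivity of $\cG^{\spadesuit,0}$ via the $J=K\cup L$ description in Lemma~\ref{le:path-back-to-0}), the second implication, and the strong-realization computation all match the paper's proof.

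The gap is in your third implication, specifically the subcase of Case~1. If $y=e_i$ is $w$-maximal in $\cV_{w,+}^{\cG}$ and $0$ is a \emph{strictly} $w$-larger source in $\cG^{\spadesuit}$, then $0>_w e_i$ forces $w_i<0$, and sign-definiteness of $w\in\cA$ then forces $w\le 0$, so $\supp_+w=\emptyset$. Your proposed edge $0\to e_k$ with $k\in K\cap\supp_+w$ therefore does not exist. The correct resolution is that this subcase is \emph{vacuous}: with $w\le 0$, $w$-maximality of $e_i$ in $\cV_{w,+}^{\cG}$ means every source in $\cV_{w,+}^{\cG}$ lies in $\{e_j:j\in\supp_-w\}$; in particular $0\notin\cV_{w,+}^{\cG}$ and $e_j\notin\cV_{w,+}^{\cG}$ for $j\notin\supp_-w$, which forces every complex reachable from $0$ in $\cG$ to be orthogonal to $w$, hence $K\cap\supp w=\emptyset$ and $0\notin\cV_{w,+}^{\spadesuit}$ after all. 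This is exactly the content of Lemma~\ref{le:A-endotactic}(ii) (the equality $\supp_{w_-}\cG=\supp_{w_-}\cG^{\spadesuit}$), which the paper invokes here instead of an ad hoc case split. Once you replace your erroneous handling of the subcase with the observation that it cannot occur, your direct argument goes through and is equivalent to the paper's use of Lemma~\ref{le:A-endotactic}.
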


\begin{proof}
Assume w.l.o.g. $\cG\neq\emptyset$. By Theorem~\ref{thm:A-endotactic=endotactic}, $\cG^{\bullet}=(\cG^{\spadesuit})^{\bullet}$ is a WRDZ monomolecular reaction graph. Moreover, $\cG^0$ and $\cG^{\bullet}$ have disjoint sets of species, and so do $(\cG^{\spadesuit})^{\bullet}$ and $(\cG^{\spadesuit})^0$.
In the light of Lemma~\ref{le:endotacticity-preserved-under-subtraction}, we assume w.l.o.g. that $\cG=\cG^0$. We first prove the two biimplications.
\medskip

\noindent(1) $\cG$ is endotactic $\implies$ $\cG^{\spadesuit}$ is WRDZ. This is a consequence of Theorem~\ref{thm:A-endotactic=endotactic}, Lemma~\ref{le:path-back-to-0}, as well as the fact that monomolecular weakly reversible reaction graphs are of deficiency zero.
\medskip

\noindent(2) $\cG^{\spadesuit}$ is WRDZ  $\implies$ $\cG^{\spadesuit}$ is endotactic. This is due to Corollary~\ref{cor:simplified-criterion-for-endotacticity}.
\medskip

\noindent(3) $\cG^{\spadesuit}$ is endotactic $\implies$ $\cG$ is endotactic. By the construction, $\cG^{\spadesuit}=(\cG^{\spadesuit})^{\spadesuit}$, and hence by the assumption that $\cG=\cG^0$, it follows from (1) and Theorem~\ref{thm:A-endotactic=endotactic} that $\cG^{\spadesuit}$ is strongly connected and strongly endotactic. In the light of Theorem~\ref{thm:A-endotactic=endotactic}, it suffices to show $\cG$ is $\cA$-endotactic. We prove it by contraposition.

Suppose $\cG$ is not $\cA$-endotactic, i.e., there exists $\emptyset\neq\cI\subseteq[d]$, $w_+=\sum_{i\in\cI}e_i$, and $w_-=-w_+$ such that either there exists a $w_+$-endotacticity violating reaction $y\ce{->[]} y'\in\cE$ of $\cG$, or there exists a $w_-$-endotacticity violating reaction $y\ce{->[]} y'\in\cE$ of $\cG$.

We first assume that there exists a $w_+$-endotacticity violating reaction $y\ce{->[]} y'\in\cE$ for $\cG$. Applying Lemma~\ref{le:first-order-reactions} to $\cG^{\spadesuit}$ yields $\cV^{\spadesuit}_+=\{0\}\cup\{e_j\}_{j\in[d]}$. Note that $\cV_+=\cV^{\spadesuit}_+$ by the construction of $\cG^{\spadesuit}$. Hence by Lemma~\ref{le:A-endotactic}(i), we have $$y\ce{->[]} y'\in\cE^{\spadesuit}_{w_+}\quad \text{and}\quad y\in\supp_{w_+}\cG=\supp_{w_+}\cG^{\spadesuit}\not\ni 0$$ This implies that $y\neq0$, and hence $y\ce{->[]} y'\in\cG^{\spadesuit}$ is also a $w_+$-endotacticity violating reaction of $\cG^{\spadesuit}$, contradicting strong-endotacticity of $\cG^{\spadesuit}$.

Analogously, one can also obtain a contradiction if we assume that there exists a $w_-$-endotacticity violating reaction $y\ce{->[]} y'\in\cE$ of $\cG$.
\medskip

Finally, by the construction of $\cG^{\spadesuit}$, it is straightforward to verify that $\cG^{\spadesuit}$ is a strong realization of $\cG$ with edge weights $\kappa_{y\ce{->[]} y'}'$ for every reaction $y\ce{->}y'\in\cE^{\spadesuit}$ chosen as follows:
\[\kappa_{y\ce{->[]} y'}'=\begin{cases}\kappa_{y\ce{->[]} y'},& \text{if}\ y\neq0,\\
\sum\limits_{0\to z\in\cE}z_k\kappa_{0\to z},& \text{if}\ y=0,\ y'=e_k,\ k\in K,\end{cases}\]
where $\kappa_{y\ce{->[]} y'}$ are the edge weights of $\cG$.
\end{proof}

\begin{corollary}\label{cor:WRDZ=monomolecular}
Any first order endotactic mass-action system is WRDZ if and only if it is monomolecular.
\end{corollary}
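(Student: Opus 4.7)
The plan is to treat the two implications separately, leveraging the decomposition $\cG=\cG^0\sqcup\cG^{\bullet}$ from Theorem~\ref{thm:A-endotactic=endotactic}, in which $\cG^{\bullet}$ is automatically WRDZ and $\cG^0$ is strongly endotactic whenever non-empty.

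For the forward direction, suppose $\cG$ is first order endotactic and WRDZ. Weak reversibility, together with the standing no-isolated-vertex convention on reaction graphs, forces every strongly connected component to contain at least two complexes, so every complex has positive out-degree, i.e.\ $\cV=\cV_+$. Since $\cG$ is first order, $\|y\|_1\le 1$ for every $y\in\cV=\cV_+$, so the net order is at most one, and since the graph contains at least one non-zero complex, it equals one, which is exactly monomolecularity.

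For the reverse direction, suppose $\cG$ is first order endotactic and monomolecular. Deficiency and weak reversibility are additive under disjoint union of reaction graphs with disjoint species sets, so it suffices to show $\cG^0$ is WRDZ when non-empty. Monomolecularity yields $\cV^0_*=\emptyset$ in the notation of Lemma~\ref{le:path-back-to-0}, and every non-zero complex in $\cV^0$ is a single-copy species $e_j$, so $K$ simplifies to $\{k\in[d]:0\rightharpoonup e_k\}$. The crux is to show $L=\emptyset$: if $\ell_0\in L$ and $k_0\in K$ (non-empty by Lemma~\ref{le:path-back-to-0}), then the chain $0\rightharpoonup e_{k_0}\rightharpoonup e_{\ell_0}$ forces $0\rightharpoonup e_{\ell_0}$, so $\ell_0\in K$, contradicting $\ell_0\in J\setminus K$. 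Thus $K=J$; combined with $e_j\rightharpoonup 0$ for all $j\in J$ from Lemma~\ref{le:path-back-to-0}, we conclude $\cG^0$ is strongly connected, hence weakly reversible. A monomolecular weakly reversible reaction graph is automatically of deficiency zero, so $\cG^0$ is WRDZ.

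The main obstacle is the verification that $L=\emptyset$: it depends crucially on monomolecularity to upgrade the abstract support-based definition of $K$ into the concrete reachability statement $0\rightharpoonup e_k$. Without this upgrade, an element of $L$ could potentially only be reachable via a higher order target of $0$, and the chain $0\rightharpoonup e_{k_0}\rightharpoonup e_{\ell_0}$ would collapse. After this step, the remainder is bookkeeping—additivity of deficiency and of weak reversibility over the disjoint pair $(\cG^0,\cG^{\bullet})$ with disjoint species sets guaranteed by Theorem~\ref{thm:A-endotactic=endotactic}—to lift the WRDZ property from $\cG^0$ and $\cG^{\bullet}$ to $\cG$ itself.
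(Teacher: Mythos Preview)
Your proof is correct. The forward direction is essentially the paper's: weak reversibility forces $\cV=\cV_+$, and first order then bounds the net order by one.

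For the reverse direction you take a genuinely different route. The paper's argument is a one-liner: when $\cG$ is monomolecular it identifies $\cG$ with $\cG^{\spadesuit}$ and then invokes Theorem~\ref{thm:endotactic=weakly-reversible}, which asserts that $\cG^{\spadesuit}$ is WRDZ whenever $\cG$ is endotactic. You instead bypass the $\cG^{\spadesuit}$ construction entirely and return to the structural Lemma~\ref{le:path-back-to-0}: monomolecularity collapses $\cV^0_*$ and turns the support-based set $K$ into the reachability set $\{k:0\rightharpoonup e_k\}$, after which the transitivity argument $0\rightharpoonup e_{k_0}\rightharpoonup e_{\ell_0}$ kills $L$, giving $K=J$ and hence strong connectivity of $\cG^0$. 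The paper's route is shorter because it cashes in on the $\cG^{\spadesuit}$ machinery already in place; your route is more elementary and self-contained, and it makes explicit \emph{where} monomolecularity enters---namely in upgrading $K$ from a support condition to a genuine reachability condition, without which an element of $L$ could hide behind a higher-order target of $0$.
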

\begin{proof}
Let $\cG$ be the reaction graph associated with a first  order endotactic mass-action system.
\medskip

\noindent$\impliedby$ If $\cG$ is monomolecular, then $\cG=\cG^{\spadesuit}$, by Theorem~\ref{thm:endotactic=weakly-reversible}, $\cG$ is WRDZ.
\medskip

\noindent$\implies$ If $\cG$ is WRDZ, then $\cV=\cV_+$ and hence it is monomolecular.
\end{proof}

\begin{remark}
It is a classical while vibrant topic on computational aspect of CRN \emph{to determine if a given mass-action system $\cG$ has a weakly reversible mass-action system realization}   \cite{CP08,SHT12,RSHP14,CJY21,CDJ24,CJY23,D23}.  Theorem~\ref{thm:endotactic=weakly-reversible} identifies a class of reaction networks with WRDZ \emph{realization} (see Definition~\ref{def:dynamical_equivalence}).
\end{remark}

Theorem~\ref{thm:endotactic=weakly-reversible} provides an easily checkable criterion for endotacticity of first order reaction graphs.
\begin{example}\label{ex:WRDZ-realization}
Consider the first order reaction graph
\[\cG\colon S_3\to S_2\ce{->[]} S_1\to0\to 2S_3\]
Then
\begin{center}\begin{tikzcd}[row sep=1em, column sep = 1em]
\cG^{\spadesuit}\colon S_3\rar&S_2\rar&S_1\rar&0\arrow[bend left=15]{lll}
\end{tikzcd}
\end{center}
 is WRDZ. Hence $\cG$ is endotactic by Theorem~\ref{thm:endotactic=weakly-reversible}; moreover, it is strongly endotactic by Theorem~\ref{thm:A-endotactic=endotactic}.
\end{example}

\section{Stability of first order reaction systems}\label{sect:stability}

{As an application of the characterization of first order endotactic reaction graphs (Theorem~\ref{thm:A-endotactic=endotactic} and Theorem~\ref{thm:endotactic=weakly-reversible})}, we will show \emph{the global asymptotic stability of a unique equilibrium in each stoichiometric compatibility class for every first order endotactic mass-action system}.

\emph{Propensity function} $\lambda_{y\ce{->[]} y'}$ of a reaction $y\ce{->[]} y'\in\mathcal{G}$ is a non-negative function which quantifies the \emph{rate} at which a reaction fires. A family of propensity functions $\mathcal{K}\coloneqq\{\lambda_{y\ce{->[]} y'}\colon y\ce{->[]} y'\in\mathcal{E}\}$ defined on the common  domain $\mathbb{R}^d_+$ of a reaction graph $\mathcal{G}$ are called a \emph{deterministic kinetics} of $\mathcal{G}$. A \emph{non-empty} reaction graph $\cG$  with a deterministic kinetics $\mathcal{K}$ is  called a \emph{deterministic reaction system} and denoted $(\mathcal{G},\mathcal{K})$.  Since this paper only discusses deterministic dynamics of reaction networks, a deterministic reaction system is abbreviated as a \emph{reaction system}, or simply a \emph{system} whenever there is no confusion arising from the context.

Let us first recall the modeling of a reaction system. Given a reaction system $(\cG,\mathcal{K})$ with $\mathcal{K}=\{\lambda_{y\ce{->} y'}\colon y\ce{->} y'\in\mathcal{E}\}$, let $x(t)$ be the molar concentrations of species of $\cG$ at time $t$.  Then $x(t)$ solves the following  initial value problem (IVP) of an ODE:
\begin{equation}\label{eq:ODE}
\dot{x}(t) = \sum_{y\ce{->} y'\in\mathcal{E}}\lambda_{y\ce{->} y'}(x)(y'-y),\quad x(0)=x_0
\end{equation}
Hence every (positive) stoichiometric compatibility class is  an \emph{affine invariant subspace} of \eqref{eq:ODE}. Any equilibrium of \eqref{eq:ODE} is also called \emph{an equilibrium of the reaction system $(\cG,\mathcal{K})$}, or simply \emph{an equilibrium of $\cG$} when $\cK$ is apparent from the context.

Assume $(\cG,\mathcal{K})$ is a first order mass-action system embedded in $\mathbb{N}^d_0$. Then \eqref{eq:ODE} can be rewritten as
\begin{equation}\label{eq:ODE-linear}
\dot{x}=xA+b,\ x(0)=x_0
\end{equation}
where $A=(a_{ij})_{d\times d}\in\mathcal{M}_d(\mathbb{R})$ with $$a_{ij}=\sum_{e_i\to y'\in\mathcal{E}}\kappa_{e_i\to y'}(y_j'-y_j),\quad i,j=1,\ldots,d$$is called the \emph{average flux matrix} of $\cG$
and $b=(b_1,\ldots,b_d)$ with
\[b_i=\sum_{0\to y'\in\mathcal{E}}\kappa_{0\to y'}y_i',\quad i=1,\ldots,d\] is called the \emph{influx vector} of $\cG$. Note that $A^T$ is the Jacobian matrix associated with  \eqref{eq:ODE-linear}.

For $A\in\mathcal{M}_d(\mathbb{R})$, let
\[r(A)=\max\{\textrm{Re}\ \eta\colon \eta\ \text{is an eigenvalue of}\ A\}\]
be the \emph{spectral  abscissa} of $A$.
A matrix is \emph{Hurwitz stable} (\emph{Hurwitz semi-stable}, respectively) if $r(A)<0$ ($r(A)\le0$, respectively). A matrix is \emph{Metzler} if all of its off-diagonal entries are non-negative. \emph{For a Metzler matrix $A$, $r(A)$ is the largest real eigenvalue of $A$} (c.f., \cite[8.3.P9]{HJ13}. Since $\cG$ is embedded in $\mathbb{N}^d_0$,  it is easy to verify that $A$ is Metzler and $b$ is non-negative. 

\begin{proposition}\label{prop:endotactic-Hurwitz}
Let $\cG$ be a first order mass-action system, and $A$  be its  average flux matrix. If $\cG$ is $\mathbf{1}$-endotactic, then $r(A)\le0$. Moreover, assume $\cG$ is endotactic.  Then
$r(A)<0$ if and only if $\cG=\cG^0$.
\end{proposition}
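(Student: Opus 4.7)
The plan is to exploit that $A$ is a Metzler matrix (off-diagonal entries $a_{ij}=\sum_{e_i\to y'}\kappa_{e_i\to y'}y'_j\ge 0$ for $i\ne j$), so its spectral abscissa is its largest real eigenvalue, and then deduce sign information from a row-sum computation. For the first statement, I would compute $(A\mathbf{1}^T)_i=\sum_{e_i\to y'}\kappa_{e_i\to y'}(\|y'\|_1-1)$. Since $\cG$ is first order, $\cV_+\subseteq\{0\}\cup\{e_j\}_{j\in[d]}$, so whenever $e_i\in\cV_{\mathbf{1},+}$ it is automatically $\mathbf{1}$-maximal there (as $\|e_i\|_1=1$ is the maximum of $\|\cdot\|_1$ over $\cV_+$); by $\mathbf{1}$-endotacticity every $e_i\to y'\in\cE_{\mathbf{1}}$ then satisfies $\|y'\|_1<1$, while reactions in $\cE\setminus\cE_{\mathbf{1}}$ satisfy $\|y'\|_1=\|e_i\|_1=1$ trivially. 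Hence $A\mathbf{1}^T\le 0$ componentwise, and a Collatz--Wielandt estimate applied to the nonnegative shift $A+\alpha I$ (for $\alpha$ large) gives $r(A)=r(A+\alpha I)-\alpha\le 0$.

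For the harder direction of the equivalence, assume $\cG=\cG^0$. Theorem~\ref{thm:endotactic=weakly-reversible} supplies a monomolecular, WRDZ strong realization $\cG^{\spadesuit}$ of $\cG$, whose average flux matrix equals $A$. Lemma~\ref{le:path-back-to-0} together with the no-redundant-species assumption forces $J=[d]$, so $\cV^{\spadesuit}_+=\{0\}\cup\{e_j\}_{j\in[d]}$, and weak reversibility plus weak connectivity (ensured by $0\in\cV^{\spadesuit}$) makes $\cG^{\spadesuit}$ strongly connected. Monomolecularity simplifies the row sums to $(A\mathbf{1}^T)_i=-\kappa^{\spadesuit}_{e_i\to 0}$, and the diagonals satisfy $-a_{ii}=\sum_{e_i\to y'}\kappa>0$. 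I would then show that $-A$ is a nonsingular $M$-matrix via weakly chained diagonal dominance: row $i$ of $-A$ is strictly diagonally dominant iff $e_i\to 0\in\cE^{\spadesuit}$, and for any non-SDD row $i$ strong connectivity of $\cG^{\spadesuit}$ supplies a directed path $e_i=y_1\to\cdots\to y_k=0$; truncating at the first occurrence of $0$ produces an all-$e_j$ prefix ending at some $e_{j^*}\to 0\in\cE^{\spadesuit}$, which is a directed path from $i$ to the SDD row $j^*$ in the digraph of $A$. Hence $-A$ is WCDD, therefore a nonsingular $M$-matrix, and $r(A)<0$.

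For the reverse implication I would use contraposition: if $\cG\ne\cG^0$, then Theorem~\ref{thm:A-endotactic=endotactic} yields $\cG^{\bullet}\ne\emptyset$ that is homogeneous with conservation law vector $\mathbf{1}$ confined to its own species, and with $\supp\cV^{\bullet}$ and $\supp\cV^0$ disjoint. Setting $v=\sum_{j\colon e_j\in\cV^{\bullet}}e_j\neq 0$, every reaction vector of $\cG$ is orthogonal to $v$ (by conservation inside $\cG^{\bullet}$, and by disjoint supports for reactions in $\cG^0$), so $Av^T=0$ and $0$ is an eigenvalue of $A$. Combined with the first statement, this forces $r(A)=0$, ruling out Hurwitz stability. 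The main technical nuisance I anticipate is in the WCDD reduction: the digraph underlying $A$ has no vertex for the complex $0$, so a connecting path in $\cG^{\spadesuit}$ must be truncated at its first visit to $0$ to extract a usable prefix, and handling this truncation carefully (together with verifying that the monomolecular structure makes $\cE^*$ match $\cE^{\spadesuit}$ at the level of $A$) is the only nontrivial bookkeeping in the argument.
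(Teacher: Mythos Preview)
Your proposal is correct and follows essentially the same route as the paper: row-sum/diagonal dominance of the Metzler matrix $A$ to get $r(A)\le 0$, weakly chained diagonal dominance to get $r(A)<0$ when $\cG=\cG^0$, and a conservation-law right eigenvector $v=\sum_{j\in\supp\cV^{\bullet}}e_j$ to exhibit a zero eigenvalue when $\cG^{\bullet}\neq\emptyset$. The only cosmetic differences are that the paper invokes Ger\v{s}gorin where you use Collatz--Wielandt on the shifted matrix, and the paper establishes WCDD directly from Lemma~\ref{le:path-back-to-0} applied to $\cG$ (since $\mathbf{1}$-endotacticity already forces targets of first-order reactions into $\{0\}\cup\{e_j\}_j$, the digraph of $A$ is read off from $\cG$ without passing to $\cG^{\spadesuit}$), whereas you take a slight detour through the monomolecular realization $\cG^{\spadesuit}$ via Theorem~\ref{thm:endotactic=weakly-reversible}; both arrive at the same WCDD conclusion.
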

\begin{proof}
We first show $r(A)\le0$. For every $i=1,\ldots,d$, we have
\begin{align*}
(\mathbf{1}A^T)_i =& e_iA\mathbf{1}^T\\
=& \sum_{y\ce{->[]} y'\in \mathcal{E}^*}\kappa_{y\ce{->[]} y'}e_i^y(y'-y)\cdot\mathbf{1}^T\\
=&\sum_{y\ce{->[]} y'\in \mathcal{E}^*,\ y=e_i}\kappa_{y\ce{->[]} y'}(y'-y)\cdot\mathbf{1}^T\\
=&\Bigl(\sum_{y\ce{->[]} y'\in \mathcal{E}^*}\kappa_{y\ce{->[]} y'}((y'-y)\cdot\mathbf{1}^T)y\Bigr)_i,
\end{align*}
i.e.,
\begin{equation*}
\mathbf{1}A^T= \sum_{y\ce{->[]} y'\in \mathcal{E}^*}\kappa_{y\ce{->[]} y'}((y'-y)\cdot\mathbf{1}^T)y
\end{equation*}
Since $\cG$ is $\mathbf{1}$-endotactic, it follows from \eqref{eq:1-endotactic-inequality} that 
$$(y'-y)\cdot\mathbf{1}^T\le0,\quad \forall y\ce{->[]} y'\in\mathcal{E}^*,$$ which implies that 
$\mathbf{1}A^T\le0$, i.e.,
\[0\le\sum_{j\neq i}a_{ij}\le -a_{ii},\quad \forall i\in[d]\]
By Ger\v{s}gorin Disc Theorem (c.f.,  \cite[Theorem~6.1.1]{HJ13}), $r(A)\le0$.

Next, we show the biimplication by contraposition. Assume $\cG$ is endotactic. By \cite[Theorem~2.5.3]{HJ94} (c.f. also \cite[Chapter 6, Theorem~2.3]{BP94}), in the light that $\cG^0$ and $\cG^{\bullet}$ have disjoint sets of species by Theorem~\ref{thm:A-endotactic=endotactic}, it suffices to show (1) $r(A)<0$ if $\cG=\cG^0$ and (2) $A$ has a zero eigenvalue if $\cG^{\bullet}\neq\emptyset$.
\medskip

\noindent(1) By Lemma~\ref{le:path-back-to-0}, for every $i\in[d]$,  $e_i\rightharpoonup0$. Hence there exists $e_j\ce{->}0$ for some $j\in[d]$ such that $e_i\rightharpoonup e_j$ or $i=j$. Then \begin{align*}
(\mathbf{1}A^T)_j = & \sum_{y\ce{->[]} y'\in\mathcal{E},\ y=e_j}\kappa_{y\ce{->[]} y'}(y'-y)\cdot\mathbf{1}^T\\
= & \kappa_{e_j\ce{->[]} 0}(0-e_j)\cdot\mathbf{1}^T=-\kappa_{e_j\ce{->[]} 0}<0,
\end{align*} which yields that $A$ is WCDD. By \cite[Theorem~2.1,Theorem~2.2]{BH64} (see also \cite{SC74};  \cite[Lemma~3.2]{AF16};  \cite[Corollary~6.2.27]{HJ13}), $A$ is a non-singular Metzler matrix and $A^{-1}$ is non-negative. By \cite[Theorem~2.5.3]{HJ94},  it further yields $r(A)<0$.
\medskip

\noindent(2) Let $\emptyset\neq I\subseteq[d]$ be the index set of the species of $\cG^{\bullet}$.  Let $w=\sum_{i\in\cI}e_i$. By Lemma~\ref{le:first-order-reactions}, $w$ confined to the set $I$ (as a $\#I$ dimensional vector) is a conservation law of $\cG^{\bullet}$, and due to $\cG^0$ and $\cG^{\bullet}$ has disjoint species sets, $A$ has a zero eigenvalue with a right eigenvector $w$.
\end{proof}
\begin{remark}\label{re:compartmental-system}
{It is noteworthy to mention a close connection to \emph{compartmental systems} \cite{JS93}. based on Proposition~6.1, one can show that, \emph{every first order $\mathbf{1}$-endotactic mass-action system $\mathcal{G}$ is a compartmental system}, where the transpose $A^T$ of the average flux matrix $A$ is referred to as the \emph{compartmental matrix} and $b$ the forcing function \cite[Section~7B]{A83}. 
In addition, $\mathcal{G}^{0}$, if it is non-empty, induces an open \emph{outflow-connected}\footnote{A compartmental system is said to be \emph{outflow-connected} if there is a path from every compartment to a compartment which has outflow to the environment \cite{JS93}. Note that by definition, $J$ is the subset of compartments from which there is a path to a compartment having outflow to the environment.} compartmental system of no trap with a Hurwitz stable WCDD compartmental matrix\footnote{{A compartmental matrix $A$ is WCDD if and only if it is non-singular if and only if there exists a positive diagonal matrix $D$ such that $AD$ is SDD \cite{JS93,AF16}.}} \cite[Theorem~3]{JS93}. In contrast, $\mathcal{G}^{\bullet}$ is an outflow-closed compartment system consisting of finitely many {simple traps}\footnote{Each strongly connected component in $\mathcal{G}^{\bullet}$ is called a \emph{simple trap} \cite{JS93}.} \cite{JS93}. Hence $r(A)\le0$, $A$ has no purely imaginary eigenvalues \cite[Theorem~12.1]{A83}, and the multiplicity of the zero eigenvalue  of $A$ equals the number of strongly connected components in $\mathcal{G}^{\bullet}$ \cite{FJ75} (c.f. \cite[Theorem~2]{JS93}).} 
\end{remark}

Despite $\mathbf{1}$-endotacticity of $\cG$ is enough for $r(A)<0$ to imply $\cG=\cG^0$, it is \emph{insufficient} for the reverse implication.
\begin{example}
Consider
\[\cG\colon 0\ce{<-[$\kappa_1$]}S_1\ce{->[$\kappa_2$]}S_2\]
It is readily verified that $\cG=\cG^0$ is $(1,1)$-endotactic but \emph{not} endotactic by Theorem~\ref{thm:endotactic=weakly-reversible} since $\cG^{\spadesuit}=\cG$ is not WRDZ. However \[A=\begin{bmatrix}
-\kappa_1-\kappa_2&\kappa_2\\
0&0
\end{bmatrix}\]
and hence $r(A)=0$.
\end{example}

Next, we prove \emph{exponential} global asymptotic stability of the unique non-negative equilibrium (see Theorem~\ref{thm:exisitence-of-positive-equilibrium} for the existence and the explicit formula of the equilibrium). For a first order endotactic mass-action system $\cG$, to provide an \emph{accurate rate of exponential convergence} particularly when $\cG^{\bullet}\neq\emptyset$, let $n$ be the maximum of numbers of \emph{sources} of each weakly connected component of $\cG$, and define
\[\rho=-\max\{\text{Re}\ \lambda\colon \lambda\ \text{is a non-zero eigenvalue of } A\}\]
If $\cG^{\bullet}\neq\emptyset$, let
$$\gamma(\cG^{\bullet})=\min\{\text{Re}\ \lambda\colon \lambda\ \text{is a non-zero eigenvalue of } \mathsf{L}(\cG^{\bullet})\}$$
In this case, it is readily verified that $$\rho=-\min\{r((-(\mathsf{L}(\cG^0))_{ij})_{i,j\in\cI_0}),-\gamma(\cG^{\bullet})\},$$
where $\cI_0$ is the set of indices of species of $\cG^0$.

If $\cG^{\bullet}=\emptyset$, then $n=d+1$ and $\rho=-r(A)$.

Before presenting the global asymptotic stability of the non-negative equilibrium with a \emph{sharp} rate of exponential conergence, we first provide an intuitive example.
\begin{example}\label{ex:rate-convergence}
Revisit the mass-action system in Example~\ref{ex:degenerate-class}: 
\begin{center}\begin{tikzcd}[row sep=1em, column sep = 1em]
\cG\colon S_2\rar{2}&S_1\rar{2}&0\rar{2}&S_1+S_2&S_3\rar{1}&S_4\rar{1}&S_5\arrow[bend left=15]{ll}{2}
\end{tikzcd}
\end{center}
It is easy to verify that the average flux matrix $A=\begin{bmatrix}A_1&0\\
0&A_2\end{bmatrix}$ is block diagonal, with $$A_1=(-(\mathsf{L}(\cG^0))_{ij})_{i,j\in[2]}=\begin{bmatrix}-2&0\\
2&-2\end{bmatrix},\quad A_2=-\mathsf{L}(\cG^{\bullet})=\begin{bmatrix}-1&1&0\\
0&-1&1\\
2&0&-2\end{bmatrix}$$ Note that $A_2$ is indeed a 
\emph{$Q$-matrix of an irreducible CTMC}  $Z_t$ on a 3-state space $\{S_3,S_4,S_5\}$ with the unique stationary distribution $\pi=(\tfrac{2}{5},\tfrac{2}{5},\tfrac{1}{5})$. Moreover, for every $a\ge0$,  $x_{*,a}=(y_*,z_{*,a})$ with $y_{*}=(2,1)$ and 
$z_{*,a}=a\pi$ is the unique equilibrium in a stoichiometric compatibility class $\Gamma_{a\pi}=\mathbb{R}^2_+\times a\Delta_3$. In addition,  the eigenvalues of $A_1$ are $-2$ of multiplicity 2 and the eigenvalues of $A_2$ are $0$, $-2+\textrm{i}$, and $-2-\textrm{i}$. Hence $n=3$, $r(A_1)=-2$, $\gamma(\cG^{\bullet})=2$, and $\rho=2$.

Let $x(t)=(y(t),z(t))$ be the solution to the  ODE associated with $\cG$ subject to the initial concentration $x_0=(y_0,z_0)$. Using variation of constants formula, by straightforward computation, it is easy to obtain that
\[y(t)-y_* = (y_0-y_*)e^{A_1t} = (y_0-y_*)e^{-2 t}
\begin{bmatrix}
 1 & 2 t\\
 0 & 1
\end{bmatrix}\]
\begin{align*}
z(t)-z_{*,\|z_0\|_1} =& z_{0}(e^{A_2t}-(1,1,1)^T\pi)\\
=& z_{0}e^{-2 t}\begin{bmatrix}
 \frac{1}{5}  \sin t+\frac{3}{5}  \cos t & \frac{1}{5}  \sin t-\frac{2}{5} \cos t& -\frac{2}{5} \sin t-\frac{1}{5}  \cos t\\
 -\frac{4}{5} \sin t-\frac{2}{5}\cos t& \frac{1}{5} \sin t+\frac{3}{5} \cos t& \frac{3}{5} \sin t-\frac{1}{5}  \cos t\\
 \frac{6}{5} \sin t-\frac{2}{5} \cos t & -\frac{4}{5}  \sin t-\frac{2}{5} \cos t& -\frac{2}{5}  \sin t+\frac{4}{5} \cos t
\end{bmatrix}
\end{align*}
This further yields that \begin{equation}\label{eq:expression_of_solution}
\begin{split}
&\|x(t)-x_{*,\|z_0\|_1}\|_1 \\
= & e^{-2t}|x_{0,1}-2|+e^{-2t}|(x_{0,1}-2)2t+(x_{0,2}-1)|\\
& +e^{-2t}\Bigl|x_{0,3}(\tfrac{1}{5}\sin t+\tfrac{3}{5}\cos t)+x_{0,4}(-\tfrac{4}{5}\sin t-\tfrac{2}{5}\cos t)+x_{0,5}(\tfrac{6}{5}\sin t-\tfrac{2}{5}\cos t)\Bigr|\\
&+e^{-2t}\Bigl|x_{0,3}(\tfrac{1}{5}\sin t-\tfrac{2}{5}\cos t)+x_{0,4}(\tfrac{1}{5}\sin t+\tfrac{3}{5}\cos t)+x_{0,5}(-\tfrac{4}{5}\sin t-\tfrac{2}{5}\cos t)\Bigr|\\
&+e^{-2t}\Bigl|x_{0,3}(-\tfrac{2}{5}\sin t-\tfrac{1}{5}\cos t)+x_{0,4}(\tfrac{3}{5}\sin t-\tfrac{1}{5}\cos t)+x_{0,5}(-\tfrac{2}{5}\sin t+\tfrac{4}{5}\cos t)\Bigr|\\
\le&e^{-2t}g(t),
\end{split}
\end{equation}
where $$g(t)=\max\{\|x_0\|_1,\|x_0-x_*\|_1\}(\tfrac{6\sqrt{2}}{5}+2t)$$ is a linear function in $t$.
\end{example}

\begin{theorem}\label{thm:deterministic-stability-endotactic}
Let $\cG$ be a first order endotactic mass-action system. Given  a stoichiometric compatibility class  $\Gamma$ of $\cG$, let $x_{*,\Gamma}$ be the unique equilibrium on $\Gamma$ and $x(t)$ be the solution to the ODE \eqref{eq:ODE-linear} with $x_0\in\Gamma$. Then there exists a polynomial $g$ of degree $\le n-2$ which depends on $x_0$ such that
\[\|x(t)-x_{*,\Gamma}\|_1\le g(t) e^{-\rho t},\quad \forall t\ge0\]
\end{theorem}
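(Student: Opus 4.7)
The plan is to exploit the structural decomposition $\cG=\cG^0\sqcup\bigsqcup_{i=1}^k\cG^i$ provided by Theorem~\ref{thm:A-endotactic=endotactic}, where the strongly connected components $\cG^i$ of $\cG^\bullet$ have pairwise disjoint sets of species, partitioning $[d]$. Because each block involves a disjoint set of species, the linear ODE~\eqref{eq:ODE-linear} splits into $k+1$ independent linear subsystems
\[\dot{x}^{(i)}=x^{(i)}A^{(i)}+b^{(i)}\mathbbm{1}_{\{i=0\}},\qquad i=0,1,\ldots,k,\]
where $x^{(i)}$ collects the coordinates of $x$ indexed by the species of $\cG^i$, and $A^{(i)}$ is the corresponding diagonal block of $A$. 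Theorem~\ref{thm:exisitence-of-positive-equilibrium} identifies the restriction of $x_{*,\Gamma}$ to each block as the unique equilibrium of that subsystem on its own stoichiometric compatibility class, so by the triangle inequality it suffices to produce an exponential bound with a polynomial factor of the required degree on each block separately and add.

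For the $\cG^0$-block, Proposition~\ref{prop:endotactic-Hurwitz} yields that $A^{(0)}$ is Hurwitz stable with $r(A^{(0)})\le-\rho$, while Theorem~\ref{thm:A-endotactic=endotactic} identifies its ambient dimension with the number of species of $\cG^0$, namely $n_0-1$ (the zero complex is a source of $\cG^0$ but contributes no species). Applying Proposition~\ref{thm:linear-ode} to the shifted variable $x^{(0)}(t)-x^{(0)}_*$ then gives $\|x^{(0)}(t)-x^{(0)}_*\|_1\le g_0(t)e^{-\rho t}$ with $g_0$ a polynomial of degree at most $(n_0-1)-1=n_0-2$ depending on $x^{(0)}(0)$.

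For each $i\ge 1$, Theorem~\ref{thm:A-endotactic=endotactic} ensures that $\cG^i$ is strongly connected and WRDZ, so $b^{(i)}=0$ and $A^{(i)}=-\mathsf{L}(\cG^i)$ is the $Q$-matrix of an irreducible continuous-time Markov chain on the state space $\cI_i$. The conservation law vector $\mathbf{1}$ (which holds on each $\cG^i$ by Lemma~\ref{le:first-order-reactions}) confines $x^{(i)}(t)$ to the scaled simplex $\|s_i\|_1\Delta_{n_i}$, so the centered variable $y^{(i)}(t):=x^{(i)}(t)-x^{(i)}_*$ evolves inside the $(n_i-1)$-dimensional invariant subspace $\mathbf{1}^\perp$. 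Because the unique (up to scalar) left null vector $\pi$ of $A^{(i)}$\---the stationary distribution of the chain\---satisfies $\pi\cdot\mathbf{1}^T=1\neq 0$, the kernel of $A^{(i)}\big|_{\mathbf{1}^\perp}$ is trivial, and the spectrum of the restriction coincides with the nonzero spectrum of $A^{(i)}$, whose real parts lie at or below $-\gamma(\cG^i)\le -\rho$. Hence $A^{(i)}\big|_{\mathbf{1}^\perp}$ is Hurwitz stable, and Proposition~\ref{thm:linear-ode} applied to the restricted system yields $\|x^{(i)}(t)-x^{(i)}_*\|_1\le g_i(t)e^{-\rho t}$ with $g_i$ polynomial of degree at most $(n_i-1)-1=n_i-2$. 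Summation over $i=0,\ldots,k$ provides the claim with $g=\sum_{i=0}^k g_i$ of degree at most $n-2$.

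The principal technical point, and the main obstacle in executing this plan, is securing the sharper degree bound $n-2$ rather than the naive $d-1$ that a direct application of Proposition~\ref{thm:linear-ode} to the full $d\times d$ matrix $A$ would give. For the $\cG^0$-block this sharper bound exploits that the zero complex contributes an extra source without a corresponding species; for the $\cG^i$-blocks it hinges on the reduction to $\mathbf{1}^\perp$, which in turn rests on the $\mathbf{1}$-conservation law. Both features are structural consequences of $\cA$-endotacticity established in Section~\ref{sect:endotacticity-first-order}, and without them the conclusion would revert to the generic linear-ODE bound.
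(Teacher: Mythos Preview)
Your proposal is correct and follows essentially the same route as the paper: both exploit the block decomposition $\cG=\cG^0\sqcup\bigsqcup_{i=1}^k\cG^i$ from Theorem~\ref{thm:A-endotactic=endotactic}, treat the $\cG^0$-block via Proposition~\ref{thm:linear-ode} and Hurwitz stability of $A^{(0)}$, and handle each $\cG^i$-block using that $A^{(i)}$ is the $Q$-matrix of an irreducible CTMC. The only substantive difference is packaging: for the $\cG^i$-blocks the paper simply invokes uniform exponential ergodicity of the associated finite irreducible chain (a continuous-time analogue of a classical discrete-time result), whereas you spell out the underlying linear algebra by restricting to the invariant hyperplane $\mathbf{1}^\perp$ and observing that $A^{(i)}\big|_{\mathbf{1}^\perp}$ is Hurwitz with spectral abscissa $\le -\rho$. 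These are the same argument in different clothing; your version has the advantage of making the degree bound $n_i-2$ transparent (dimension of $\mathbf{1}^\perp$ minus one), while the paper's citation leaves that accounting implicit. One small caution: Proposition~\ref{thm:linear-ode} is stated for reaction systems on $\mathbb{R}^d_+$, so when you apply it to the centered variable on $\mathbf{1}^\perp$ you are really invoking the general Fundamental Theorem for linear autonomous ODEs that underlies its proof; this is harmless but worth flagging.
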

\begin{proof}
{The asymptotic stability of the equilibrium immediately follows from Remark~\ref{re:compartmental-system}. For self-containedness, we provide a proof independent of the theory of compartmental systems.}

For the same sake as in the proof of Theorem~\ref{thm:exisitence-of-positive-equilibrium}, it suffices to prove the following two special cases.
\medskip

\noindent\underline{Case I.} Assume $\cG=\cG^0$. It follows from Proposition~\ref{thm:linear-ode}.
\medskip

\noindent\underline{Case II.} Assume $\cG=\cG^{\bullet}$    is strongly connected. Then $b=0$ and $A$ is a $Q$-matrix which defines a finite irreducible CTMC  $Z_t$ on the state space $\cV$ (i.e., a CTMC on the graph $\cG$). Then $p(t)\coloneqq\frac{x(t)}{\|x_0\|_1}$ is the probability distribution of $Z_t$ given the initial distribution $p_0=\frac{x_0}{\|x_0\|_1}$. The  exponential global asymptotic stability of $x(t)$ confined to $\Gamma$ follows from the \emph{uniform exponential ergodicity} of $Z_t$, where the precise upper estimate of $\|p(t)-p_0\|_1$ comes from a continuous-time analogue of the classical exponential ergodicity result for discrete time Markov chains (e.g., \cite[Theorem~5.3]{J13}).
\end{proof}

\begin{remark}
\begin{enumerate}
\item[(i)] Complex dynamics may emerge for higher order mass-action systems. For instance, 2-species second order mass-action systems were constructed to undergo \emph{fold bifurcations}, \emph{Hopf bifurcations}, \emph{Bogdanov-Takens bifurcations}, and \emph{Bautin bifurcations} \cite{BBH24} (see also \cite{BB23,JTZ24} for bifurcations and multistability of mass-action systems). 
In the light of Theorem~\ref{thm:deterministic-stability-endotactic}, these complex dynamics will \emph{not} appear in first order endotactic mass-action systems, despite linear ODEs 
generically allow for dynamics such as Hopf bifurcations.
\item[(ii)] It follows from Theorem~\ref{thm:deterministic-stability-endotactic}  a stronger persistence, the so-called \emph{``vacuous persistence''} \cite{G09,G11}, which means that trajectories starting even from the \emph{boundary} of a positive stoichiometric compatibility class 
 will eventually keep a positive distance from the boundary. Certain binary enzymatic networks were shown to be vacuously persistent \cite{G09,G11}.
\item[(iii)] Revisit Example~\ref{ex:rate-convergence}.  It follows from the equality in \eqref{eq:expression_of_solution}  that $$\|x(t)-x_*\|_1 \ge|x_{0,1}-2|g_*(t)e^{-2t},$$
where for all $x_{0,1}\neq 2$, $g_*(t)=1+|2t+\tfrac{x_{0,2}-1}{x_{0,1}-2}|$ is a linear function in $t$. Recall that $\rho=2$ and $n=3$. This example illustrates that both the exponential rate and the degree of the polynomial $g$ in the upper estimate in  Theorem~\ref{thm:deterministic-stability-endotactic} are  \emph{sharp}.
\end{enumerate}
\end{remark}
Let $(\cG,\cK)$ be a reaction system, where $\cG=(\cV,\cE)$ and $\cK=\{\lambda_{y\ce{->}y'}\colon y\ce{->}y'\in\cE\}$. Recall that $(\cG,\cK)$ is \emph{complex balanced} if there exists an equilibrium  $x_*\in\mathbb{R}^d_+$ such that
 \begin{equation}\label{eq:complex-balancing}
\sum_{y\ce{->}y'\in\cE} \lambda_{y\ce{->}y'}(x_*) = \sum_{y'\ce{->}y\in\cE} \lambda_{y'\ce{->}y}(x_*),\quad \forall y\in\cV
 \end{equation}
 Any equilibrium $x_*\in\mathbb{R}^d_+$ satisfying \eqref{eq:complex-balancing} is called a \emph{complex balanced equilibrium} of $(\cG,\cK)$.
\begin{corollary}\label{cor:GAC-linear-complex-balanced-system}
Every linear complex balanced mass-action system has a globally attractive positive equilibrium in each positive stoichiometric compatibility class.
\end{corollary}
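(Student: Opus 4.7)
The plan is to reduce the corollary to the main theorems of the section, which already cover first order endotactic mass-action systems. A linear mass-action system is, by definition, a first order mass-action system, so I only need to check that "complex-balanced" places the system inside the scope of the endotactic framework, and then harvest the conclusions of Theorem~\ref{thm:exisitence-of-positive-equilibrium} and Theorem~\ref{thm:deterministic-stability-endotactic}.

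First I would invoke the classical fact (Horn--Jackson, recorded earlier in the Introduction via \cite{G03}) that every complex-balanced mass-action system is weakly reversible. Combined with Corollary~\ref{cor:simplified-criterion-for-endotacticity}, which states that every weakly reversible reaction graph is endotactic, this places the given linear complex-balanced system inside the class of first order endotactic mass-action systems. Hence all structural results proved in Section~\ref{sect:endotacticity-first-order} and the dynamical results of Section~\ref{sect:stability} apply.

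Next, fix an arbitrary positive stoichiometric compatibility class $\Gamma$. By Theorem~\ref{thm:exisitence-of-positive-equilibrium}, $\Gamma$ corresponds to some index $s\in\mathbb{R}^{d-n_0}_{++}$ (the positivity of the entries of $s$ being equivalent to $\Gamma$ having non-empty interior), and the unique equilibrium $x_*^{(s)}$ inside $\Gamma$ is strictly positive. This simultaneously gives existence, uniqueness, and positivity of the equilibrium on each positive stoichiometric compatibility class; I would not need to re-derive these from the complex-balancing condition \eqref{eq:complex-balancing} itself.

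Finally, I would apply Theorem~\ref{thm:deterministic-stability-endotactic} to the same system: for every initial condition $x_0\in\Gamma$, the solution $x(t)$ satisfies $\|x(t)-x_*^{(s)}\|_1\le g(t)e^{-\rho t}$ for some polynomial $g$ of degree at most $n-2$, so $x(t)\to x_*^{(s)}$ as $t\to\infty$. This is the global attractivity in $\Gamma$ asserted by the corollary. There is no real obstacle here; the only point worth being careful about is invoking that complex-balancing implies weak reversibility (so that endotacticity is available), after which everything reduces to applying the two theorems already established. The corollary is essentially a packaging of the stronger Theorem~\ref{thm:deterministic-stability-endotactic} under the classical hypothesis of complex-balancing.
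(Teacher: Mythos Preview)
Your proposal is correct and follows essentially the same route as the paper: invoke that complex-balanced implies weakly reversible (the paper cites \cite[Theorem~3C]{H72} for this), hence endotactic, and then apply Theorem~\ref{thm:exisitence-of-positive-equilibrium} and Theorem~\ref{thm:deterministic-stability-endotactic}. The only difference is that you make the intermediate step through Corollary~\ref{cor:simplified-criterion-for-endotacticity} explicit, whereas the paper leaves it implicit.
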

\begin{proof}
Let $\cG$ be a linear complex balanced mass-action system. Then $\cG$ is a first order weakly reversible mass-action system \cite[Theorem~3C]{H72}. From Theorem~\ref{thm:exisitence-of-positive-equilibrium}, there exists a positive equilibrium $x_{*,\Gamma}$ on each positive stoichiometric compatibility class $\Gamma$ of $\cG$. By Theorem~\ref{thm:deterministic-stability-endotactic}, $x_{*,\Gamma}$ is globally attractive on $\Gamma$.
\end{proof}
\begin{remark}
{Observe that every first order weakly reversible reaction graph can be decomposed into strongly connected components of pairwise disjoint species. Based on this observation, Corollary~\ref{cor:GAC-linear-complex-balanced-system} can be proved by applying \cite{A11b,A11a} (or \cite{GMS14}) to each sub reaction system with a strongly connected reaction graph.}
\end{remark}
Finally, we demonstrate the potential applicability of the stability result to higher order reaction systems modeled by \emph{nonlinear} ODEs,  via \emph{the theory of asymptotically autonomous differential equations} \cite{T92}.
\begin{example}
\label{ex:nonlinear}
Consider the following bimolecular mass-action system:
\begin{center}\begin{tikzcd}[row sep=1em, column sep = 1em]
\cG\colon S_2\rar{2}&S_1\rar{2}&0\rar{2}&S_1+S_2& S_2+S_3\rar{3}&S_1+S_3&S_3\rar{1}&S_4\rar{1}&S_5\arrow[bend left=15]{ll}{2}
\end{tikzcd}
\end{center}

It is easy to observe that $S_2+S_3\ce{->[3]}S_1+S_3$ is a $(1,0,2,2,2)$-endotacticity violating reaction of $\mathcal{G}$, and hence   $\mathcal{G}$ is not endotactic. The associated ODE with this mass-action system is given by
\begin{equation}\label{eq:original_ODE}
    \dot{y}= y(A_1+\widetilde{A}_1(z^{(1)})) + (2,2),\quad \dot{z} = z A_2,
\end{equation}
where $x=(y,z)$, $z=(z^{(1)},z^{(2)},z^{(3)})$, $A_1$, $A_2$ are given in Example~\ref{ex:rate-convergence}, and  $\widetilde{A}_1(z^{(1)}) = \begin{bmatrix}
    0&0\\
3z^{(1)}&-3z^{(1)}
\end{bmatrix}$. Let $V(x)=y_1+2y_2+\|z\|_1$. Using $V$ as a Lyapunov function, one can show that there exists a \emph{compact} global attractor in each stoichiometric compatibility class $\Gamma_{a\pi}=\mathbb{R}^2_+\times a\Delta_3$ for every $a>0$. This implies that all forward  solutions of \eqref{eq:original_ODE} in $\Gamma_{a\pi}$ are bounded. Note that for all initial conditions $x_0=(y_0,z_0)\in \Gamma_{a\pi}$, $z(t)\to z_{*,a}=a(\tfrac{2}{5},\tfrac{2}{5},\tfrac{1}{5})$ as $t\to\infty$. Hence the right hand side of $\dot{y}$ in  \eqref{eq:original_ODE} converges to $y(A_1+\widetilde{A}_1(\tfrac{2}{5}a)) + (2,2)$ as $t\to\infty$, and \eqref{eq:original_ODE} becomes  \emph{asymptotically autonomous} \cite{T92} \emph{with limit equation} 
\begin{equation}
\label{eq:limit_ODE}
\dot{y} = y(A_1+\widetilde{A}_1(\tfrac{2}{5}a)) + (2,2),\quad  \dot{z} = z A_2,
\end{equation}
which corresponds to the following \emph{first order endotactic} mass-action system:
\begin{center}\begin{tikzcd}[row sep=1em, column sep = 1em]
\cG_{\square}\colon S_2\arrow[]{rrr}{2+3\cdot\tfrac{2}{5}a}&&&S_1\rar{2}&0\rar{2}&S_1+S_2&S_3\rar{1}&S_4\rar{1}&S_5\arrow[bend left=15]{ll}{2}
\end{tikzcd}
\end{center}
By Theorem~\ref{thm:deterministic-stability-endotactic}, $\mathcal{G}_{\square}$ has a globally asymptotically stable positive equilibrium $x_{*,a}$ in the \emph{common}  stoichiometric compatibility class $\Gamma_{a\pi}$. It is readily verified that $x_{*,a}$ is also the unique equilibrium of \eqref{eq:original_ODE}. As a consequence of \cite[Theorem~1.2]{T92} due to Markus \cite{M56} for asymptotically autonomous differential equations, $x_{*,a}$ is also the unique globally asymptotically stable equilibrium of $\mathcal{G}$ in $\Gamma_{a\pi}$.  
\end{example}
\begin{remark}
    From Example~\ref{ex:nonlinear}, we can extend the global stability result (Theorem~\ref{thm:deterministic-stability-endotactic}) to mass-action systems which are \emph{asymptotically autonomous with limit equation as a first order endotactic mass-action system}.
\end{remark}

%\section*{Declarations}

%\subsection*{Data availability} Data sharing is not applicable to this article as no datasets were generated or analyzed in this study.

%\subsection*{Conflict of interest} The author has no competing interests to declare that are relevant to the content of this article.

\appendix

\section{Kinetics}\label{subsect:kinetics}

We introduce a typical type of kinetics  
which is prevalently presumed in chemistry \cite{T90}.
\begin{definition}\label{def:source-dependent-kinetics}
A deterministic kinetics is called a \emph{source-dependent kinetics} (SDK) if the propensity function of each reaction in the reaction network is proportional to a non-negative function that only relies on the source of the reaction. In this case, a reaction system is called an \emph{SDK system}.

Hence an SDK system $(\cG,\cK)$ can be represented by a \emph{weighted reaction graph} $\cG$ and a collection of non-negative \emph{generating  propensity functions} $\mathcal{F}=\{\eta_y\colon \mathbb{R}^d_+\ce{->[]} \mathbb{R}_+\}_{y\in\cV_+}$ such that
\[\lambda_{y\ce{->[]} y'}(x) = \kappa_{y\ce{->[]} y'}\eta_y(x),\quad \forall y\ce{->[]} y'\in\cE,\]
where $\kappa_{y\ce{->[]} y'}$, the edge weight of $y\ce{->[]} y'$, is called the \emph{reaction rate constant} of the reaction $y\ce{->[]} y'$. We call the quantity $\sum_{y\ce{->[]} y'\in\mathcal{E}}\lambda_{y\ce{->[]} y'}(x)(y'-y)$ the \emph{average kinetic flux rate} of $(\cG,\cK)$.
\end{definition}
\begin{definition}\label{def:dynamical_equivalence}
Let $(\cG_i,\mathcal{K}_i)$ for $i=1,2$ be two reaction systems with  $\cG_i=(\cV_i,\cE_i)$ and $\mathcal{K}_i=\{\lambda_{y\ce{->[]} y'}^{(i)}\colon y\ce{->[]} y'\in\mathcal{E}_i\}$. We say $(\cG_1,\mathcal{K}_1)$ and $(\cG_2,\mathcal{K}_2)$ are \emph{dynamically equivalent} if they share the same average kinetic flux rate:
\begin{equation}\label{eq:overall-flux}
\sum_{y\ce{->[]} y'\in\mathcal{E}_1}\lambda^{(1)}_{y\ce{->[]} y'}(x)(y'-y)= \sum_{y\ce{->[]} y'\in\mathcal{E}_2}\lambda^{(2)}_{y\ce{->[]} y'}(x)(y'-y),\quad \forall x\in\mathbb{R}^d_+;
\end{equation} in this case, one reaction system is called a  \emph{realization} of the other. \end{definition}

 Dynamical equivalence is also called \emph{``macro-equivalence''} \cite{HJ72} or \emph{``confoundability''} \cite{CP08}, in the literature of CRNT. The generic phenomenon that a given ODE may associate with  different reaction systems is the so-called \emph{non-identifiability} of reaction systems \cite{CP08}.

\begin{definition}\label{def:strong-realization}
Let $(\cG_i,\cK_i)$ for $i=1,2$ be two reaction systems with $\cG_i=(\cV_i,\cE_i)$ and $\cK_i=\{\lambda^{(i)}_{y\ce{->[]} y'}\colon y\ce{->[]} y'\in\cE_i\}$. Assume the two reaction systems share the same set of sources $\cV_+$. We say one reaction system is a \emph{strong realization} of the other if their \emph{kinetic flux rates} are identical at each source\begin{equation}\label{eq:source-outflux}
\sum_{y\ce{->[]} y'\in\cE_1}\lambda^{(1)}_{y\ce{->[]} y'}(x)(y'-y) = \sum_{y\ce{->[]} y'\in\cE_2}\lambda^{(2)}_{y\ce{->[]} y'}(x)(y'-y),\quad \forall y\in\cV_{+},\quad \forall x\in\mathbb{R}^d_+
\end{equation}
\end{definition}

Hence every strong realization of a reaction system is a realization of that system.

For two SDK systems of the same set of sources as well as the same collection of generating propensity functions, the condition~\eqref{eq:source-outflux} in Definition~\ref{def:strong-realization} can be rephrased  as \eqref{eq:rate-constant-source-outflux}, 
which is purely a relation between the two \emph{weighted} reaction graphs $\cG_1=(\cV_1,\cE_1)$ and $\cG_2=(\cV_2,\cE_2)$ with respective edge weights $\{\kappa_{y\ce{->}y'}^{(i)}\colon y\ce{->}y'\in\cE_i\}$ for $i=1,2$.

The following is a direct consequence of Definition~\ref{def:strong-realization} and Definition~\ref{def:weighted-graph-equivalence}.
\begin{proposition}
Let  $(\cG_i,\cK_i)$ for $i=1,2$ be two SDK systems with  the same set of sources and the same collection $\cF$ of  generating propensity functions. Then $(\cG_2,\cK_2)$ is a strong realization of $(\cG_1,\cK_1)$ if and only if $\cG_2$ is a strong realization of $\cG_1$.
\end{proposition}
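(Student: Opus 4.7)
The plan is to observe that the SDK factorization makes the equivalence essentially algebraic. Since both systems share the common source set $\cV_+$ and the common generating family $\cF=\{\eta_y\}_{y\in\cV_+}$, every reaction in $\cE_i$ has propensity $\lambda^{(i)}_{y\ce{->[]}y'}(x) = \kappa^{(i)}_{y\ce{->[]}y'}\eta_y(x)$, where the factor $\eta_y(x)$ is independent of the summation index $y'$ and identical across the two systems.

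First I would substitute this factorization into \eqref{eq:source-outflux}. For each fixed $y\in\cV_+$, the scalar $\eta_y(x)$ pulls outside both sums, so \eqref{eq:source-outflux} rewrites as
\begin{equation*}
\eta_y(x)\left[\sum_{y\ce{->[]} y'\in\cE_1}\kappa^{(1)}_{y\ce{->[]}y'}(y'-y) \;-\; \sum_{y\ce{->[]} y'\in\cE_2}\kappa^{(2)}_{y\ce{->[]}y'}(y'-y)\right]=0, \quad \forall x\in\mathbb{R}^d_+.
\end{equation*}
The ``if'' direction is then immediate: \eqref{eq:rate-constant-source-outflux} says the bracketed vector vanishes at each $y\in\cV_+$, and multiplying by the scalar $\eta_y(x)$ recovers \eqref{eq:source-outflux}. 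For the ``only if'' direction, since $y\in\cV_+$ is a genuine source the function $\eta_y$ cannot be identically zero on $\mathbb{R}^d_+$ (otherwise every reaction out of $y$ would carry zero flux in both systems and $y$ would play no kinetic role at all); evaluating the displayed identity at any $x_0$ with $\eta_y(x_0)\neq 0$ and dividing out the scalar yields \eqref{eq:rate-constant-source-outflux} at $y$, and ranging over $y\in\cV_+$ completes the claim.

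No step presents a real obstacle; the proposition is essentially a linearity statement saying that, once the source-dependent factor $\eta_y$ is common to both systems, source-wise dynamical equivalence is controlled solely by the net reaction-vector contribution at each source, which is precisely a relation on the underlying weighted graphs. The only point requiring mild care is the non-triviality of $\eta_y$ for each $y\in\cV_+$, which I would either invoke as a tacit non-degeneracy condition on the generating propensity functions or circumvent by noting that sources with $\eta_y\equiv 0$ contribute zero to both sides of \eqref{eq:source-outflux} and can therefore be discarded without loss of generality.
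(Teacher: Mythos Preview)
Your proposal is correct and matches the paper's approach exactly: the paper does not give a detailed argument but simply states that the proposition is ``a direct consequence of Definition~\ref{def:strong-realization} and Definition~\ref{def:weighted-graph-equivalence}'', and your write-up just spells out this direct consequence by substituting the SDK factorization $\lambda^{(i)}_{y\ce{->[]}y'}(x)=\kappa^{(i)}_{y\ce{->[]}y'}\eta_y(x)$ into \eqref{eq:source-outflux} and factoring out $\eta_y(x)$. The paper likewise treats the equivalence between \eqref{eq:source-outflux} and \eqref{eq:rate-constant-source-outflux} as an immediate rephrasing (see the paragraph preceding Definition~\ref{def:weighted-graph-equivalence}), so the non-degeneracy caveat you flag for $\eta_y$ is tacitly assumed in the paper as well.
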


Next for a given SDK system, we specify when every realization is a strong realization, in terms of the generating propensity functions.
\begin{proposition}\label{prop:linear-independence-realization}
Let  $\cG_i$ for $i=1,2$ be two SDK systems with the same set of sources and the same collection $\cF$ of generating propensity functions. Assume $\cF$ consists of linearly independent functions, i.e., $\dim\spa\cF=\#\cF$, where $$\spa\cF=\{\sum_{j=1}^mc_jf_j\colon c_j\in\mathbb{R},\ f_j\in\cF,\ j=1,\ldots,m\}$$ Then $(\cG_2,\cK_2)$ is a strong realization of $(\cG_1,\cK_1)$ if and only if $(\cG_2,\cK_2)$ is a realization of $(\cG_1,\cK_1)$.
 \end{proposition}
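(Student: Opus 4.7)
The forward direction is immediate: if $(\cG_2,\cK_2)$ is a strong realization of $(\cG_1,\cK_1)$, then summing the source-wise equality \eqref{eq:source-outflux} (which after substituting $\lambda_{y\to y'}(x)=\kappa_{y\to y'}\eta_y(x)$ reduces to \eqref{eq:rate-constant-source-outflux} times $\eta_y(x)$) over all $y\in\cV_+$ immediately yields the average kinetic flux rate identity \eqref{eq:overall-flux}. So the work is entirely in the converse.

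For the converse, assume $(\cG_2,\cK_2)$ is a realization of $(\cG_1,\cK_1)$. The plan is to rewrite \eqref{eq:overall-flux} by grouping terms according to their common source. For each $y\in\cV_+$, define the vector
\[
v_y \coloneqq \sum_{y\ce{->[]} y'\in\cE_1}\kappa_{y\ce{->[]} y'}^{(1)}(y'-y) \;-\; \sum_{y\ce{->[]} y'\in\cE_2}\kappa_{y\ce{->[]} y'}^{(2)}(y'-y) \in \mathbb{R}^d.
\]
Substituting $\lambda^{(i)}_{y\to y'}(x)=\kappa^{(i)}_{y\to y'}\eta_y(x)$ into \eqref{eq:overall-flux} and rearranging yields $\sum_{y\in\cV_+}\eta_y(x)\,v_y = 0$ for every $x\in\mathbb{R}^d_+$.

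Reading this vector identity componentwise gives, for each coordinate $i\in[d]$, the scalar identity $\sum_{y\in\cV_+}(v_y)_i\,\eta_y(x)=0$ on $\mathbb{R}^d_+$. Since $\cF=\{\eta_y\}_{y\in\cV_+}$ is linearly independent by hypothesis, every coefficient $(v_y)_i$ must vanish; hence $v_y=0$ for each $y\in\cV_+$. This is precisely \eqref{eq:rate-constant-source-outflux}, and multiplying by the common $\eta_y(x)$ recovers \eqref{eq:source-outflux}, so $(\cG_2,\cK_2)$ is a strong realization of $(\cG_1,\cK_1)$.

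There is no substantive obstacle here: the argument is essentially a bookkeeping reorganization of the single equality \eqref{eq:overall-flux} by source, followed by one invocation of linear independence applied coordinate by coordinate. The only point demanding a little care is making sure the grouping by source is unambiguous, which uses exactly the SDK assumption that the functional factor of $\lambda^{(i)}_{y\to y'}$ depends only on $y$ and coincides across the two systems since they share the same collection $\cF$.
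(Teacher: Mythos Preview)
Your proof is correct and follows essentially the same approach as the paper: group the realization identity \eqref{eq:overall-flux} by source using the SDK structure, then invoke linear independence of $\cF$ to force each source-wise coefficient to vanish. Your version is marginally cleaner in that you work directly with the constant vectors $v_y$ built from the rate constants, whereas the paper first divides by $\eta_y(x)$ on the set where it is positive and then multiplies back; but this is a cosmetic difference, not a substantive one.
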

 \begin{proof}
\noindent $\implies$ This is obvious by definition.
\medskip

\noindent $\impliedby$ Let $\cV_+$ be the set of sources and $\cF=\{\eta_y\colon y\in\cV_+\}$. Note that \eqref{eq:overall-flux} can be rewritten as
 \[\sum_{y\in\cV_+}\Bigl(\sum_{y\ce{->[]} y'\in\cE_1}\frac{\lambda^{(1)}_{y\ce{->[]} y'}(x)}{\eta_{y}(x)}(y'-y) -\sum_{y\ce{->[]} y'\in\cE_2}\frac{\lambda^{(2)}_{y\ce{->[]} y'}(x)}{\eta_{y}(x)}(y'-y)\Bigr) \eta_{y}(x)= 0,\quad \forall x\in\mathbb{R}^d_+,\quad \eta_{y}(x)>0\]
 By linear independence of $\cF$, it yields
 \[\sum_{y\ce{->[]} y'\in\cE_1}\frac{\lambda^{(1)}_{y\ce{->[]} y'}(x)}{\eta_{y}(x)}(y'-y) -\sum_{y\ce{->[]} y'\in\cE_2}\frac{\lambda^{(2)}_{y\ce{->[]} y'}(x)}{\eta_{y}(x)}(y'-y)=0,\quad \forall y\in\cV_+,\ \forall x\in\mathbb{R}^d_+,\quad \eta_{y}(x)>0,\]
 which further implies  \eqref{eq:source-outflux} by multiplying $\eta_y$ on both sides.
 \end{proof}

A popular SDK is the \emph{deterministic mass-action kinetics}, which is given by $$\eta_y(x)=x^y,\quad \forall  y\in\cV_+,\quad \forall x\in\mathbb{R}^d_+$$
A reaction system with deterministic mass-action kinetics is also known as a \emph{mass-action system} in the literature of CRNT \cite{H72,F72}.

We next introduce the \emph{joint} of two reaction systems.

\begin{definition}\label{def:joint-of-KRN}
Let $(\cG_1,\mathcal{K}_1)$ and $(\cG_2,\mathcal{K}_2)$ be two reaction systems, where $\mathcal{K}_i=\{\lambda^{(i)}_{y\ce{->[]} y'}\colon y\ce{->[]} y'\in\mathcal{E}_i\}$ for $i=1,2$. We define their \emph{joint} $(\cG_1\cup\cG_2,\cK_{1,2})$ as a reaction system with the kinetics $\cK_{1,2}=\{\lambda_{y\ce{->[]} y'}^{(1,2)}\colon y\ce{->[]} y'\in\cE_1\cup\cE_2\}$ given by
$$\lambda^{(1,2)}_{y\ce{->[]} y'}(x)=\lambda^{(1)}_{y\ce{->[]} y'}(x)\mathbbm{1}_{\cE_1}(y\ce{->[]} y') +\lambda^{(2)}_{y\ce{->[]} y'}(x)\mathbbm{1}_{\cE_2}(y\ce{->[]} y'),\quad x\in\mathbb{R}^d,$$  where $\cG_1\cup\cG_2$ with the set of edges $\cE_1\cup\cE_2$
is the joint reaction graph as defined in Definition~\ref{def:joint-of-RN}.
\end{definition}
 \begin{remark}
The joint of reaction systems has been studied in the  literature of CRNT  \cite{GHMS20}, e.g., motivated by studying dynamics induced by cross-talk of biological systems.
 \end{remark}

\section{A lemma for Theorem~\ref{thm:endotactic=weakly-reversible}}

\begin{lemma}\label{le:A-endotactic}
Let $\cG=(\cV,\cE)$ be a first order reaction graph embedded in $\mathbb{N}^d_0$.  Let $w_+=\sum_{i\in \cI}e_i$ for some $\emptyset\neq\cI\subseteq[d]$ and $w_-=-w_+$. Assume $\cG^0\neq\emptyset$, $w_+\notin\mathsf{S}_{\cG}^{\perp}$, and $\cV_+=\{0\}\cup\{e_i\}_{i\in[d]}$. Then
\begin{equation}\label{eq:maximality-condition}
y\ \text{is}\ w_+\text{-maximal in}\ \cV_+ \Leftrightarrow y\in\{e_i\}_{i\in\cI};\ e_i\ \text{is}\ w_-\text{-maximal in}\ \cV_+ \Leftrightarrow y\in\{0\}\cup\{e_i\}_{i\in[d]\setminus\cI}
\end{equation}
Moreover,
\begin{enumerate}
\item[(i)] if $\cG^{\spadesuit}$ is $w_+$-strongly endotactic, then  $$\supp_{w_+}\cG=\supp_{w_+}\cG^{\spadesuit}\subseteq\{e_i\}_{i\in\cI}\ \ \text{and}\ \ \cE_{w_+}\setminus\{0\to y'\in\cE\}\subseteq \cE^{\spadesuit}_{w_+};$$
\item[(ii)] if $\cG^{\spadesuit}$ is $w_-$-strongly endotactic, then $$\supp_{w_-}\cG=\supp_{w_-}\cG^{\spadesuit}\subseteq\{0\}\cup\{e_i\}_{i\in[d]\setminus\cI}\ \ \text{and}\ \ \cE_{w_-}\setminus\{0\to y'\in\cE\}\subseteq \cE^{\spadesuit}_{w_-}.$$
\end{enumerate}
\end{lemma}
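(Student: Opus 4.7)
The plan has three layers. First, I would establish \eqref{eq:maximality-condition} by the direct arithmetic observation that, on $\cV_+ = \{0\}\cup\{e_i\}_{i\in[d]}$, the inner product $y\cdot w_+^T$ equals $1$ exactly on $\{e_i\}_{i\in\cI}$ and $0$ elsewhere; by symmetry $y\cdot w_-^T$ equals $0$ exactly on $\{0\}\cup\{e_i\}_{i\in[d]\setminus\cI}$ and $-1$ elsewhere, which immediately identifies the $w_\pm$-maximal elements of $\cV_+$. At the same time I would note that $0\in\cV_+$ forces at least one reaction $0\to z'\in\cE$ to exist, so $K\neq\emptyset$ and hence $\cV^{\spadesuit}_+ = \cV_+$, allowing \eqref{eq:maximality-condition} to apply verbatim to $\cG^{\spadesuit}$.

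Second, I would prove (i) using that every $w_+$-maximal source in $\cV_+$ is some $e_i$ with $i\in\cI$ (never $0$). The $w_+$-strong endotacticity of $\cG^{\spadesuit}$ supplies some $e_{i^*}\in\supp_{w_+}\cG^{\spadesuit}$ with $i^*\in\cI$, witnessed by a reaction $e_{i^*}\to y'\in\cE^{\spadesuit}_{w_+}$. Since every added edge of $\cG^{\spadesuit}$ has source $0\neq e_{i^*}$, this reaction actually lies in $\cE^*\subseteq\cE$ and thus in $\cE_{w_+}$, so $e_{i^*}\in\supp_{w_+}\cG$. Consequently the max $w_+$-value in both $\cV_{w_+,+}$ and $\cV^{\spadesuit}_{w_+,+}$ equals $1$, forcing the containment $\subseteq\{e_i\}_{i\in\cI}$. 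The equality of supports then follows by the same correspondence: for any source $e_i$ with $i\in\cI$, the reactions $e_i\to y'$ in $\cE^{\spadesuit}_{w_+}$ and in $\cE_{w_+}$ coincide (all such reactions lie in $\cE^*$, never among the added edges). The edge inclusion $\cE_{w_+}\setminus\{0\to y'\in\cE\}\subseteq\cE^{\spadesuit}_{w_+}$ then is immediate, because every $y\to y'\in\cE_{w_+}$ with $y\neq 0$ has $\|y\|_1 = 1$, hence sits in $\cE^*\subseteq\cE^{\spadesuit}$ and retains its $w_+$-nonperpendicularity.

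Third, I would adapt this template to (ii), with $w_-$ replacing $w_+$. The main obstacle is the case where the $w_-$-maximal source supplied by strong endotacticity is $y^* = 0$: the witnessing reaction in $\cE^{\spadesuit}_{w_-}$ is then one of the added edges $0\to e_k$ with $k\in K\cap\cI$, which does not lie in $\cE$. To recover a genuine witness in $\cE_{w_-}$, I would unfold the definition $K=\supp\{y':0\rightharpoonup y'\}$ into an actual directed path $0 = z_0\to z_1\to\cdots\to z_m$ in $\cG$ with $(z_m)_k > 0$, so that $z_0\cdot w_-^T = 0$ while $z_m\cdot w_-^T < 0$. There must then exist a first index $j^*$ at which the $w_-$-value strictly decreases; the source $z_{j^*}\in\cV_+\subseteq\{0\}\cup\{e_i\}_{i\in[d]}$ still has $w_-$-value $0$, hence lies in $\{0\}\cup\{e_i\}_{i\in[d]\setminus\cI}$, and the reaction $z_{j^*}\to z_{j^*+1}\in\cE_{w_-}$ places $z_{j^*}\in\supp_{w_-}\cG$. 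The remaining claims of (ii) (equality of supports away from this degenerate case and the edge inclusion) then follow by the same translation arguments as in (i).
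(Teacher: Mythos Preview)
Your proposal is correct and follows essentially the same architecture as the paper's proof: establish \eqref{eq:maximality-condition} by direct inspection of the inner products on $\cV_+=\{0\}\cup\{e_i\}_{i\in[d]}$, observe that $\cV^{\spadesuit}_+=\cV_+$, and then exploit that $\cG$ and $\cG^{\spadesuit}$ share all first-order reactions (the paper phrases this as $\cG^*=(\cG^{\spadesuit})^*$), so the two graphs differ only in reactions with source $0$. For part (i) your argument and the paper's are nearly identical.

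The one substantive difference is in part (ii), in the case $0\in\supp_{w_-}\cG^{\spadesuit}$. The paper simply asserts that for the relevant $j\in K\cap\cI$ there is a \emph{direct} edge $0\to y'\in\cE$ with $y'_j>0$, which then places $0\in\supp_{w_-}\cG$. You instead trace an actual directed path $0=z_0\to\cdots\to z_m$ in $\cG$ to a vertex with $(z_m)_k>0$ and pick the first step where the $w_-$-value strictly drops; since all intermediate vertices lie in $\cV_+$ (hence have $w_-$-value in $\{0,-1\}$) and the value starts at $0$ and is nondecreasing up to $j^*$, the source $z_{j^*}$ of that step has $w_-$-value $0$ and lands in $\supp_{w_-}\cG$. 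Your path argument is the more careful of the two, since the definition $K=\supp\{y':0\rightharpoonup y'\}$ literally only supplies a path, not a direct edge. One caveat: your path argument shows that \emph{some} element of $\{0\}\cup\{e_i\}_{i\in[d]\setminus\cI}$ lies in $\supp_{w_-}\cG$, but not necessarily $0$ itself; so when you write that the ``equality of supports \ldots follow[s] by the same translation arguments,'' you are leaving a small gap at the single vertex $0$. This does not affect the downstream use of the lemma in Theorem~\ref{thm:endotactic=weakly-reversible} (which only needs the containment $\supp_{w_-}\cG\subseteq\{0\}\cup\{e_i\}_{i\in[d]\setminus\cI}$ and the edge inclusion, both of which your argument does deliver), but you should either tighten this step or state explicitly what the path argument gives.
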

\begin{proof}
Note that \eqref{eq:maximality-condition} follows from $\cV_+=\{0\}\cup\{e_i\}_{i\in[d]}$ and the definition of $w_+$ and $w_-$. For $u\in\mathbb{R}^d$, let $\cE^{\spadesuit}_u$ abbreviate $(\cE^{\spadesuit})_u$.
Then $\cE^{\spadesuit}_u=\emptyset$ implies $\cE_u=\emptyset$, since $\cE^{\spadesuit}$ and $\cE^{\spadesuit}$ may only differ by zeroth order reactions, and reaction vectors of the zeroth order reactions in $\cE$ are linear combinations of those of the zeroth order reactions in $\cE^{\spadesuit}$:
\begin{equation}\label{eq:linear-dependence}
y'\in\spa\{z\colon 0\ce{->}z\in \cE^{\spadesuit}\},\quad \forall 0\ce{->}y'\in\cE\setminus \cE^{\spadesuit}
\end{equation} Since $w_+\notin\mathsf{S}_{\cG}^{\perp}$, we have $\cE_{w_+}\neq\emptyset$, which implies $\cE^{\spadesuit}_{w_+}\neq\emptyset$ by contraposition. Analogously, we can show that $\cE_{w_-}\neq\emptyset$ and $\cE^{\spadesuit}_{w_-}\neq\emptyset$. Next, we prove (i) and (ii).
\medskip

\noindent (i) Since $\cG^{\spadesuit}$ is $w_+$-strongly endotactic,  we have $\supp_{w_+}\cG^{\spadesuit}\neq\emptyset$ and every element in $\supp_{w_+}\cG^{\spadesuit}$ is $w_+$-maximal  in $\cV_{w,+}$. Then it follows from  \eqref{eq:maximality-condition} that $$\supp_{w_+}\cG^{\spadesuit}\subseteq\{e_i\}_{i\in\cI},$$ and hence $0\notin\supp_{w_+}\cG^{\spadesuit}$. By the  construction of $\cG^{\spadesuit}$,  $\cG$ and $\cG^{\spadesuit}$ share the same subset of first order reactions: $\cG^*=(\cG^{\spadesuit})^*$. Hence $$\supp_{w_+}\cG=\supp_{w_+}\cG^{\spadesuit};\quad \cE_{w_+}\setminus\{0\to y'\in\cE\}\subseteq \cE^{\spadesuit}_{w_+}$$
\medskip

\noindent(ii) Similar to (i),  $w_-$-strong endotacticity  of $\cG^{\spadesuit}$ yields $$\emptyset\neq\supp_{w_-}\cG^{\spadesuit}\subseteq\{0\}\cup\{e_i\}_{i\in[d]\setminus\cI}$$ Note that $0\in \cV^{\spadesuit}_+=\cV_+$.

If $0\in\supp_{w_-}\cG^{\spadesuit}$, then there exists $0\to e_j\in \cE^{\spadesuit}_{w_-}$ such that $0>_{w_-}e_j$ due to $w_-$-endotacticity of $\cG^{\spadesuit}$. This yields $j\in\supp w_-$. By the construction of $\cG^{\spadesuit}$, there exists $0\to y'\in\cE$ such that $y_j'>0$. In the light of \eqref{eq:maximality-condition} and $w_-\le0$, this yields that $$0\to y'\in\cE_{w_-},\quad 0\in\supp_{w_-}\cG$$ Analogously, since $\cG^*=(\cG^{\spadesuit})^*$, we have $\cG^*\cap\cE_{w_-}=(\cG^{\spadesuit})^*\cap \cE^{\spadesuit}_{w_-}$, and hence \[\supp_{w_-}\cG=\supp_{w_-}\cG^{\spadesuit}\subseteq\{0\}\cup\{e_i\}_{i\in[d]\setminus\cI}\]
and \begin{equation}\label{eq:essential-reactions}
\cE_{w_-}\setminus\{0\to y'\in\cE\}\subseteq \cE^{\spadesuit}_{w_-}
\end{equation}

If $0\notin\supp_{w_-}\cG^{\spadesuit}$, then due to \eqref{eq:maximality-condition} we conclude by contraposition that $0\notin \cV^{\spadesuit}_{w_-,+}$, i.e., $$(z'-0)\cdot w_-^T=0,\quad 0\to z'\in \cE^{\spadesuit},$$ 
which also yields from \eqref{eq:linear-dependence} that $$\{0\to z'\in\cE\}\cap\cE_{w_-}=\emptyset$$ Hence $0\notin\supp_{w_-}\cG$. In this case, the conclusion also holds with \eqref{eq:essential-reactions} and
$$\supp_{w_-}\cG=\supp_{w_-}\cG^{\spadesuit}\subseteq\{e_i\}_{i\in[d]\setminus\cI}$$
\end{proof}

\section{Formula for positive equilibria}\label{subsec:formular_for_positive_equilibria}

Let $\cG$ be a first order endotactic reaction graph. It is easy to observe that \emph{the multiplicity of the zero eigenvalue equals the number $k$ of strongly connected components $\cG^i$ of $\cG$ not containing the zero complex}. On the one hand, the average flux matrix confined to the sub reaction graph $\cG^0$ is non-singular by Proposition~\ref{prop:endotactic-Hurwitz}. On the other hand, as will be seen below (Theorem~\ref{thm:exisitence-of-positive-equilibrium}), the ODE for the sub reaction graph $\cG^{\bullet}$ is decomposed blockwise into $k$ ODEs, each of which models the mass-action system $\cG^i$. Each  of the $k$ ODEs is indeed a \emph{chemical master equation} (CME) associated with an irreducible CTMC on a finite state space\---the standard simplex in $\mathbb{R}^{d_i}$, where $d_i$ is the number of species of the strongly connected component $\cG^i$.

Due to Theorem~\ref{thm:A-endotactic=endotactic}, let  $\mathcal{G}^{\bullet}=\cup_{i=1}^k\mathcal{G}^i$ consist of $k\in\mathbb{N}_0$ strongly connected components $\cG^i=(\cV^i,\cE^i)$, where $\cV^i=\{e_\ell\}_{\ell\in\cI_i}$ for $\cI_i\subseteq[d]_0$ and $i\in[k]_0$; by convention $e_0=0\in\cV^0$ if $\cG^0\neq\emptyset$, and $k=0$ when $\cG=\cG^0$. Let $n_i=\#\cI_i$ for $i\in[k]_0$. Let $c_{\ell}$ be the sum of weights of all \emph{spanning trees} of $\cG^i$ rooted at a vertex $e_{\ell}\in\cV^i$ for $i\in[k]_0$.
For $n\in\mathbb{N}$ and $a\in\mathbb{R}_+$, let $\Delta_n=\{y\in\mathbb{R}^n_+\colon \|y\|_1=1\}$ be the $(n-1)$-dimensional simplex of $\mathbb{R}^n_+$ and $a\Delta_n=\{ay\colon y\in\Delta_{n}\}$ a \emph{scaled} simplex, where by convention,  $0\Delta_n=\{0\}\subseteq\mathbb{R}^n_+$ is a \emph{degenerate}  scaled simplex.

If $n_0<d$, for every $s=(s_1,s_2,\ldots,s_k)\in\mathbb{R}^{d-n_0}_{+}$ with $s_{i}\in\mathbb{R}^{n_i}_{+}$,  $i\in[k]$, let
 $$\Gamma_s\coloneqq\Bigl\{y\in\mathbb{R}^d_+
\colon \sum_{\ell\in\cI_i}y_{\ell}=\|s_i\|_1,\ i\in[k]\Bigr\}$$
and define $x_*^{(s)}=(x_{*,1}^{(s)},\ldots,x_{*,d}^{(s)})\in\mathbb{R}^d_{+}$ by
\begin{equation}\label{eq:expression-of-positive-equilibrium}
x_{*,\ell}^{(s)}=\sum_{i=1}^ks_i\frac{c_{\ell}}{\sum_{j\in\cI_i}c_j}\mathbbm{1}_{\cI_i}(\ell)+ \frac{c_{\ell}}{c_0}\mathbbm{1}_{\cI_0}(\ell)\mathbbm{1}_{\cV}(0),\quad \ell\in[d]
\end{equation}
It is straightforward to verify that $x_*^{(s)}\in\Gamma_s$. In particular, if $n_0=0$, then $$\Gamma_s=\oplus_{i=1}^k\|s_i\|_1\Delta_{n_i};$$ 
if $n_0=d$,  let $\Gamma_{\emptyset}=\mathbb{R}^d_+$ and $x_*^{(\emptyset)}=(x_{*,1}^{(\emptyset)},\ldots,x_{*,d}^{(\emptyset)})\in\mathbb{R}^d_{++}$ with
\begin{equation}\label{eq:expression-of-positive-equilibrium-non-singular}
x_{*,\ell}^{(\emptyset)}=\frac{c_{\ell}}{c_0},\quad \ell\in[d]\end{equation}

We first represent the unique equilibrium of \eqref{eq:ODE-linear} when $\cG=\cG^0$.
\begin{lemma}\label{le:positivity-of-equilibrium}
Let $\cG$ be a first order endotactic mass-action system. Assume $\cG=\cG^0$. Let $A$ be its average flux matrix and $b$ the influx vector. Then $\Gamma_{\emptyset}=\mathbb{R}^d_+$ is the unique stoichiometric compatibility class of $\cG$, and $x^{(\emptyset)}_*=b(-A)^{-1}$
 is the unique equilibrium of $\cG$ in $\Gamma_{\emptyset}$ which is positive.
\end{lemma}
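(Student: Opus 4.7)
The plan is to decompose the statement into three assertions: (a) that $\Gamma_{\emptyset}=\mathbb{R}^d_+$ is indeed the only stoichiometric compatibility class, (b) that the linear equation $xA+b=0$ has a unique solution given by $b(-A)^{-1}$, and (c) that this solution lies in $\mathbb{R}^d_{++}$. Each ingredient should be cashed out by appealing to a tool already established in the excerpt.

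First I would show $\mathsf{S}_{\cG}=\mathbb{R}^d$. Under the running no-redundant-species assumption together with $\cG=\cG^0\neq\emptyset$, we have $\supp\cV^0=[d]$, so Lemma~\ref{le:path-back-to-0} gives $J=[d]$, i.e., $e_i\rightharpoonup 0$ for every $i\in[d]$. Summing reaction vectors along a directed path $e_i=y_0\to y_1\to\cdots\to y_m=0$ telescopes to $-e_i$, placing every $-e_i$ in $\mathsf{S}_{\cG}$. Hence $\mathsf{S}_{\cG}=\mathbb{R}^d$ and the unique stoichiometric compatibility class is $\mathbb{R}^d_+=\Gamma_{\emptyset}$.

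Next, Proposition~\ref{prop:endotactic-Hurwitz} applied to the endotactic first order system $\cG=\cG^0$ yields $r(A)<0$, so $A$ is non-singular. The equilibrium equation $0=xA+b$ therefore has exactly one solution in $\mathbb{R}^d$, namely $x^{(\emptyset)}_*=b(-A)^{-1}$.

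Finally, for positivity of $x^{(\emptyset)}_*$, I would pass to the weakly reversible deficiency-zero strong realization $\cG^{\spadesuit}$ of $\cG$ furnished by Theorem~\ref{thm:endotactic=weakly-reversible}. Strong realization preserves the right-hand side of \eqref{eq:ODE-linear} (by Definition~\ref{def:strong-realization}), and $\cG^{\spadesuit}$ inherits $\mathsf{S}_{\cG^{\spadesuit}}=\mathsf{S}_{\cG}=\mathbb{R}^d$, so its positive stoichiometric compatibility class is $\mathbb{R}^d_{++}$. The Deficiency Zero Theorem of Horn-Jackson-Feinberg then guarantees a (unique, complex-balanced) positive equilibrium of $\cG^{\spadesuit}$ in $\mathbb{R}^d_{++}$, which by equality of ODEs must coincide with the unique equilibrium $b(-A)^{-1}$ of $\cG$. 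Alternatively, one can argue directly: since $\cG^{\spadesuit}$ is monomolecular, strongly connected (because $\cG=\cG^0$) and weakly reversible, the Markov Chain Tree Theorem ensures each $c_{\ell}>0$ in \eqref{eq:expression-of-positive-equilibrium-non-singular}, giving $x^{(\emptyset)}_*>0$ componentwise.

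The only nontrivial step is the last one; modulo Theorem~\ref{thm:endotactic=weakly-reversible} and the Deficiency Zero Theorem (or the Markov Chain Tree Theorem), positivity is immediate. The first two steps reduce to a path-chasing argument in $\cG$ and a direct application of Proposition~\ref{prop:endotactic-Hurwitz}.
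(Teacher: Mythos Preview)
Your proposal is correct and follows essentially the same route as the paper: Lemma~\ref{le:path-back-to-0} for $\mathsf{S}_{\cG}=\mathbb{R}^d$, Proposition~\ref{prop:endotactic-Hurwitz} for invertibility of $A$, and passage to the WRDZ realization $\cG^{\spadesuit}$ via Theorem~\ref{thm:endotactic=weakly-reversible} for positivity. The only nuance is that the paper takes your \emph{second} alternative for positivity---the Kirchhoff Matrix Tree Theorem applied to the strongly connected monomolecular graph $\cG^{\spadesuit}$---because it simultaneously identifies $b(-A)^{-1}$ with the explicit tree-weight formula \eqref{eq:expression-of-positive-equilibrium-non-singular}, which is part of what the lemma asserts (recall $x^{(\emptyset)}_*$ is \emph{defined} by that formula); your Deficiency Zero Theorem route gives positivity but does not by itself verify this identification, though the paper acknowledges it as a valid alternative in Remark~\ref{re:positive-equilibrium}(ii).
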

\begin{proof}

Since $\cG=\cG^0$, by Lemma~\ref{le:path-back-to-0}, we have $\cV_+=\{e_j\}_{j\in[d]_0}$, $\mathsf{S}_{\cG}=\mathbb{R}^d$, and $\Gamma_{\emptyset}=\mathbb{R}^d_+$ is the unique stoichiometric compatibility class of $\cG$. Since $\cG$ is endotactic, by Proposition~\ref{prop:endotactic-Hurwitz}, $A$ is non-singular and $x=b(-A)^{-1}$ is the unique equilibrium of $\cG$.

By Theorem~\ref{thm:endotactic=weakly-reversible}, $\cG^{\spadesuit}$ is a realization of $\cG$. We assume w.l.o.g. that $\cG=\cG^{\spadesuit}$, i.e., $\cG$ is monomolecular and strongly connected. Let $\mathsf{L}(\mathcal{G})$ denote the \emph{Laplacian} of $\mathcal{G}$ \cite{K97}. For simplicity, we denote $\kappa_{e_i\to e_j}$ by $\kappa_{ij}$, for $i,j\in[d]_0$. By Proposition~\ref{prop:endotactic-Hurwitz}, $$(\mathsf{L}(\cG))_{ij}=\begin{cases}-\kappa_{ij},& \text{if}\ i\neq j, \\
\sum_{\ell\neq i}\kappa_{i\ell},& \text{if}\ i=j,\end{cases}\quad i,j\in[d]_0$$ Since $\mathcal{G}$ is strongly connected, by \emph{Kirchhoff Matrix Tree Theorem} \cite{K1847} for weighted directed graphs (also called \emph{Tutte's Theorem} \cite{T48})), for $\ell\in[d]_0$, \emph{$c_{\ell}>0$ is the cofactor of the diagonal element $(\mathsf{L}(\cG))_{\ell \ell}$}, and $(c_0,\ldots,c_d)$ is the unique left eigenvector of $\mathsf{L}(\cG)$ w.r.t. the simple eigenvalue $0$ up to a scalar. Note that $$A=(-(\mathsf{L}(\cG))_{ij})_{i,j\in[d]},\quad b=-((\mathsf{L}(\cG))_{01},\ldots,(\mathsf{L}(\cG))_{0d})$$ Hence \emph{$xA+b=0$ if and only if $[x\ 1]\in\mathbb{R}^{d+1}$ is a left eigenvector of $\mathsf{L}(\cG)$ w.r.t. the simple eigenvalue $0$}. This implies that  the equilibrium $b(-A)^{-1}$ coincides with \eqref{eq:expression-of-positive-equilibrium-non-singular} and hence is positive.
\end{proof}

\begin{theorem}\label{thm:exisitence-of-positive-equilibrium}
Let $\cG$ be a first order endotactic mass-action system. There exists a unique equilibrium in each stoichiometric compatibility class. More precisely,
\begin{enumerate}
\item[(i)]  If $n_0=d$, then $\Gamma_{\emptyset}$ is the unique  stoichiometric compatibility class of  $\cG$ with a unique positive equilibrium $x_*^{(\emptyset)}$.
\item[(ii)] If $n_0<d$, then $\Gamma_s$ is a  stoichiometric compatibility class of  $\cG$ for every $s=(s_1,s_2,\ldots,$ $s_k)\in\mathbb{R}^{d-n_0}_{+}$ with a unique equilibrium $x_*^{(s)}$, and in particular the interior of $\Gamma_s$ is a positive stoichiometric compatibility class of  $\cG$ containing $x_*^{(s)}>0$ if and only if $s=(s_1,s_2,\ldots,s_k)\in\mathbb{R}^{d-n_0}_{++}$.
\end{enumerate}
 \end{theorem}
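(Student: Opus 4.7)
The plan is to leverage the structural decomposition already provided by Theorem~\ref{thm:A-endotactic=endotactic} to reduce the statement to Lemma~\ref{le:positivity-of-equilibrium} (for the $\cG^0$ piece) plus a standard Kirchhoff/Matrix-Tree argument (for each $\cG^i$, $i\in[k]$). Specifically, since $\cG^0$ and $\cG^\bullet=\bigsqcup_{i=1}^k\cG^i$ have disjoint sets of species, the linear ODE \eqref{eq:ODE-linear} decouples blockwise: setting $x_{\cI_i}\coloneqq (x_\ell)_{\ell\in\cI_i}$, the equations for $x_{\cI_0}$ involve only coordinates in $\cI_0$ and those for $x_{\cI_i}$ only coordinates in $\cI_i$. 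It then suffices to analyze each block separately and glue. Observe also that $\mathsf{S}_\cG=\mathsf{S}_{\cG^0}\oplus\bigoplus_{i=1}^k\mathsf{S}_{\cG^i}$, and by Lemma~\ref{le:path-back-to-0} combined with Lemma~\ref{le:positivity-of-equilibrium} the first summand equals $\mathbb{R}^{\cI_0}$, while by Lemma~\ref{le:first-order-reactions} each $\mathsf{S}_{\cG^i}$ lies in $\mathbf{1}_{\cI_i}^{\perp}$ and in fact equals it (since $\cG^i$ is strongly connected and hence its reaction vectors span $\mathbf{1}_{\cI_i}^\perp$). Consequently $\mathsf{S}_\cG^\perp=\operatorname{span}\{\mathbf{1}_{\cI_i}\colon i\in[k]\}$, which shows that the stoichiometric compatibility classes of $\cG$ are exactly the sets $\Gamma_s$ parameterized (up to the redundancy $s\mapsto(\|s_1\|_1,\ldots,\|s_k\|_1)$) by $s\in\mathbb{R}^{d-n_0}_+$, and that $\operatorname{int}\Gamma_s$ is non-empty iff $\|s_i\|_1>0$ for every $i\in[k]$, i.e.\ iff $s\in\mathbb{R}^{d-n_0}_{++}$.

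For the $\cG^0$-block I would apply Lemma~\ref{le:positivity-of-equilibrium} verbatim: the restricted average-flux matrix is Hurwitz stable by Proposition~\ref{prop:endotactic-Hurwitz}, so there is a unique equilibrium in $\mathbb{R}^{\cI_0}_+$, and the Kirchhoff/Matrix-Tree computation inside the proof of Lemma~\ref{le:positivity-of-equilibrium} (applied to the strongly connected WRDZ realization $(\cG^0)^{\spadesuit}$ provided by Theorem~\ref{thm:endotactic=weakly-reversible}) identifies it as $x_{*,\ell}=c_\ell/c_0$ for $\ell\in\cI_0$, which is strictly positive. This is precisely the second summand of \eqref{eq:expression-of-positive-equilibrium}.

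For each $i\in[k]$ the sub reaction graph $\cG^i$ is, by Theorem~\ref{thm:A-endotactic=endotactic} combined with Lemma~\ref{le:first-order-reactions}, a strongly connected monomolecular (hence WRDZ) reaction graph on the species $\{S_\ell\}_{\ell\in\cI_i}$ with conservation law vector $\mathbf{1}_{\cI_i}$. Thus the $i$-th block ODE has the form $\dot x_{\cI_i}=x_{\cI_i}A^{(i)}$ where $-A^{(i)}$ is (the transpose of) the Laplacian of $\cG^i$. By Tutte's/Kirchhoff's Matrix-Tree Theorem the left nullspace of $A^{(i)}$ is one-dimensional and spanned by $(c_\ell)_{\ell\in\cI_i}$ (the vector of rooted spanning-tree weights), all of whose entries are strictly positive. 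Since the mass $M_i\coloneqq\sum_{\ell\in\cI_i}x_\ell$ is conserved by $\dot x_{\cI_i}=x_{\cI_i}A^{(i)}$, on each level set $\{M_i=\|s_i\|_1\}$ there is exactly one equilibrium, namely $x_\ell=\|s_i\|_1\,c_\ell/\sum_{j\in\cI_i}c_j$ for $\ell\in\cI_i$; this is the first summand of \eqref{eq:expression-of-positive-equilibrium}, and it is positive iff $\|s_i\|_1>0$.

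Gluing the pieces, $x_*^{(s)}$ defined by \eqref{eq:expression-of-positive-equilibrium} is the unique equilibrium inside $\Gamma_s$, and it lies in the interior of $\Gamma_s$ exactly when $\|s_i\|_1>0$ for all $i\in[k]$, i.e.\ when $s\in\mathbb{R}^{d-n_0}_{++}$. Part (i) is the special case $k=0$, $n_0=d$, which reduces literally to Lemma~\ref{le:positivity-of-equilibrium}. The main technical obstacle, and the only place where some care is required, is the identification of the stoichiometric subspace and of the parameterization of compatibility classes: one must rule out any ``hidden'' conservation laws coming from $\cG^0$ (which is done via the Hurwitz-stability half of Proposition~\ref{prop:endotactic-Hurwitz}, forcing $\mathsf{S}_{\cG^0}=\mathbb{R}^{\cI_0}$) and confirm that each $\cG^i$, $i\geq1$, contributes exactly one independent conservation law $\mathbf{1}_{\cI_i}$ (which follows from strong connectedness plus monomolecularity). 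Everything else is the routine Matrix-Tree computation already worked out in Lemma~\ref{le:positivity-of-equilibrium}.
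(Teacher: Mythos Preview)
Your proposal is correct and follows essentially the same approach as the paper: reduce via the block decomposition from Theorem~\ref{thm:A-endotactic=endotactic} to the two special cases $\cG=\cG^0$ (handled by Lemma~\ref{le:positivity-of-equilibrium}) and $\cG=\cG^i$ a single strongly connected monomolecular component (handled by the Kirchhoff Matrix-Tree Theorem). If anything, you are more explicit than the paper in identifying $\mathsf{S}_\cG^\perp=\operatorname{span}\{\mathbf{1}_{\cI_i}\colon i\in[k]\}$ and thereby justifying that the $\Gamma_s$ are exactly the stoichiometric compatibility classes, which the paper treats rather tersely.
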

\begin{proof}

By Theorem~\ref{thm:A-endotactic=endotactic}, $\cG^0$ and $\cG^{\bullet}$ are sub reaction graphs of disjoint sets of species, and $\cG^{\bullet}$ is 
 weakly reversible with strongly connected components of pairwise disjoint sets of species. Hence $A$ is block diagonal, and it suffices to prove (i) when $\cG=\cG^0$; 
  and a special case of (ii):   
when  $\cG=\cG^{\bullet}$ is strongly connected with $n_0=0$.
\medskip

\noindent(i) It follows immediately from Lemma~\ref{le:positivity-of-equilibrium}.
\medskip

\noindent(ii) Assume $\cG=\cG^{\bullet}$. In this case, $\mathsf{L}(\cG)=-A$ is the Laplacian of $\cG$ and $\Gamma_s=\|s\|_1\Delta_d$ for every $s\in\mathbb{R}^d_+$. In particular, 
$\Gamma_0=\{0\}$ consisting of the zero equilibrium of $\cG$ is a (degenerate) stoichiometric compatibility class.   Next, we consider the case when $s\neq0$. 
Let $s\in\mathbb{R}^d_+\setminus\{0\}$. Note that  $b=0$ as $0\notin\cV_+$. 
Using a similar argument as in the proof of Lemma~\ref{le:positivity-of-equilibrium} based on the Kirchhoff Matrix Tree Theorem, one can show that $x_*^{(s)}$ given in \eqref{eq:expression-of-positive-equilibrium}  is  the unique equilibrium of $\cG$ in $\Gamma_s$.

 Positivity of the stoichiometric compatibility class simply  follows from the definition of $\Gamma_s$.
\end{proof}

\begin{remark}\label{re:positive-equilibrium}
\begin{enumerate}
\item[(i)] Below is a direct implication of Theorem~\ref{thm:exisitence-of-positive-equilibrium}:  \emph{The minimal order for an endotactic mass-action system to have multiple positive equilibria in a positive stoichiometric compatibility class is 2}. Consider the bimolecular Edelstein network:
\[S_1\ce{<=>[\kappa_1][\kappa_2]}2S_1\quad S_1+S_2\ce{<=>[\kappa_3][\kappa_4]}S_2\ce{<=>[\kappa_5][\kappa_6]}S_3,\]
which is (weakly) reversible, and hence is endotactic. It is known that for certain choices of the reaction rate constants, this mass-action system is \emph{bistable} with three positive equilibria in a positive stoichiometric compatibility class  \cite[Example~3.C.3]{F79}. Indeed, higher order endotactic  or weakly reversible mass-action systems ( of positive deficiency)  may even have infinitely many positive equilibria \cite{BCY20,KD23}.
\item[(ii)] By Theorem~\ref{thm:endotactic=weakly-reversible}, applying Deficiency Zero Theorem \cite[Theorem~6.1.1]{F87} (see also \cite{HJ72,FH77}) to $\cG^{\spadesuit}$ also yields the existence of a unique positive equilibrium in each positive  stoichiometric compatibility class. However, as remarked in \cite{F87}, it \emph{cannot} exclude the existence of boundary equilibria.
\item[(iii)] Kirchhoff Matrix Tree Theorem has been commonly used in the literature to obtain formula of positive equilibria of reaction systems, e.g., in \cite{CDSS09}.
%\item[\red{(iv)}] Despite the result on positivity of equilibrium seems classical, particularly in the light of the literature on compartmental systems \cite{HCH10,FR11,A83} (also called ``compartmental models'' \cite{HCH10} and ``positive linear systems" \cite{FR11}), it indeed is \emph{not}. Although the associated ODE is a positive linear system \cite[Definition~2, Chapter~2]{FR11}, by \cite[Theorem~2, Chapter~2]{FR11}, even based on the asymptotic stability given in Theorem~\ref{thm:deterministic-stability-endotactic}, we cannot infer the positivity of the unique equilibrium since $b$ may not be strictly positive as required in \cite[Theorem~19,Theorem~20, Chapter~8]{FR11}. 
\end{enumerate}
\end{remark}

From Theorem~\ref{thm:exisitence-of-positive-equilibrium}, $x_*^{(s)}$ is \emph{not} positive if and only if $\Gamma_s$ has an empty interior, precisely when $s$ has zero entries.
 For instance, when concentration of all species in one of the strongly connected components without the zero complex is set to be zero, then $x_*^{(s)}\not>0$.

\begin{example}\label{ex:degenerate-class}
Revisit Example~\ref{ex:Introductory-example} 
 with specific reaction rate constants: 
\begin{center}\begin{tikzcd}[row sep=1em, column sep = 1em]
\cG\colon S_2\rar{2}&S_1\rar{2}&0\rar{2}&S_1+S_2&S_3\rar{1}&S_4\rar{1}&S_5\arrow[bend left=15]{ll}{2}
\end{tikzcd}
\end{center}
We have $n_0=2,\ n_1=3$. By Theorem~\ref{thm:exisitence-of-positive-equilibrium},  $x_*^{((0,0,0))}=(2,1,0,0,0)$ is the unique equilibrium of $\cG$ in the stoichiometric compatibility class $\Gamma_{(0,0,0)}=\mathbb{R}^2_+\times\{(0,0,0)\}$. 
\end{example}
For first order mass-action systems, despite endotacticity implies (1) \emph{the existence of a positive equilibrium} in each positive stoichiometric compatibility class as well as (2) the 
\emph{average flux matrix $A$ is semi-stable}, conversely,
the two properties (1) and (2) together may \emph{not} yield that the reaction system has an endotactic MAK realization.
\begin{example}
Consider the following mass-action system:
\begin{center}\begin{tikzcd}[row sep=1em, column sep = 1em]
\cG\colon 0\arrow[bend left=30]{rr}{\hspace{-1em}4 }\arrow[rightharpoonup]{r}{5}&S_1\arrow[yshift=-.08cm,rightharpoonup]{l}{3}\arrow[rightharpoonup]{r}{2}&S_2 \arrow[yshift=-.08cm,rightharpoonup]{l}{2}\rar{1}&2S_2
\end{tikzcd}
\end{center}
The average flux matrix and the influx vector in \eqref{eq:ODE-linear} associated with $\cG$ are given by \[A=\begin{bmatrix}
-5&2\\
2&-1
\end{bmatrix},\quad b=[5\ 4]\]
It is straightforward to verify that $A$ is Hurwitz, and $x_*=[13 \ 30]$ is the unique equilibrium of $\cG$ in the unique stoichiometric compatibility class $\mathbb{R}^2_+$. Nevertheless, by the proof of  Proposition~\ref{prop:endotactic-Hurwitz}, the ODE fails to have an endotactic first order mass-action system realization since
$\mathbf{1}A^T\not\le0$.
\end{example}

Despite $(-A)^{-1}$ exists and is non-negative for a Metzler Hurwitz stable matrix $A$ (c.f.,  \cite[Theorem~2.5.3]{HJ94} or \cite[Chapter 6, Theorem~2.3]{BP94}), $x_*=b(-A)^{-1}\ge0$ may \emph{not} be strictly positive if the first order reaction network is \emph{not} endotactic.
\begin{example}
Consider the mass-action system
\[\cG\colon 0\ce{<=>[1][1]}S_1\ce{<-[2]}S_2\ce{->[1]}2S_2\]
The associated ODE has the corresponding average flux matrix and the influx vector \[A=\begin{bmatrix}
-1&0\\
2&-1\end{bmatrix},\quad b=[1\ 0]\]
It is easy to verify that $A$ is Metzler and Hurwitz stable while  $x_*=b(-A)^{-1}=[1\ 0]$, the unique equilibrium  of $\cG$,   is \emph{not} positive.
\end{example}

\section{Diagonally dominant matrix}
Recall that a matrix $A$ is \emph{diagonally dominant} if for each $i\in[d]$,
\begin{equation*}
|a_{ii}|\ge\sum_{j\neq i}|a_{ij}|
\end{equation*}
In particular, a row $i$ is called \emph{strictly diagonally dominant} (SDD) if \begin{equation*}
|a_{ii}|>\sum_{j\neq i}|a_{ij}|
\end{equation*}
A diagonally dominant matrix $A$ is called \emph{weakly chained diagonally dominant} (WCDD) if for each non-SDD row $j$, there exists a path in the associated directed graph of $A$ from the vertex $j$ to a vertex $i$, where row $i$ is SDD \cite{BH64,SC74,AF16}. 
\section{A proposition on exponential asymptotic stability}\label{sect:GAS}
 
It is noteworthy that GAS of a positive equilibrium implies permanence of the reaction system. Let us first recall the definition of permanence.

\begin{definition}\label{def:persistence+boundedness+permanence}
Let $\cG$ be a reaction system of $d$ species in terms of the ODE \eqref{eq:ODE}. {Let $\Gamma$ be a stoichiometric compatibility class of $\cG$.}
\begin{enumerate}
\item[$\bullet$] $\cG$ is \emph{persistent} on $\Gamma$  if, regardless of the initial condition subject to the interior of $\Gamma$, its solution $x(t)$ satisfies
\[\min_{1\le i\le d}\liminf_{t\to\infty}|x_i(t)|>0\] 
{Moreover, $\cG$ is \emph{uniformly persistent} on $\Gamma$ if there exists $\epsilon>0$ such that 
\[\min_{1\le i\le d}\liminf_{t\to\infty}|x_i(t)|\ge \epsilon\]
regardless of the initial condition in the interior of $\Gamma$.}
\item[$\bullet$] $\cG$ \emph{has bounded trajectories}  if, regardless of the initial condition, its solution $x(t)$ satisfies
\[\limsup_{t\to\infty}\|x(t)\|_1<\infty\]
{Moreover, $\cG$ is \emph{has uniformly bounded trajectories} on $\Gamma$ if there exists $\epsilon>0$ such that 
\[\min_{1\le i\le d}\limsup_{t\to\infty}|x_i(t)|\le 1/\epsilon\]
regardless of the initial condition in the interior of $\Gamma$.}
\item[$\bullet$] {$\cG$ is \emph{permanent} on $\Gamma$ if $\cG$ has a compact global attractor which is a subset of the interior of $\Gamma$.}
\end{enumerate}
\end{definition}
Despite persistence with boundedness (of trajectories) is weaker than permanence,  \emph{uniform} persistence with  \emph{uniformly} boundedness is equivalent to permanence of a reaction system.\footnote{Indeed, uniform boundedness yields the existence of a compact global attractor $K$ in $\Gamma$, and uniform persistence further yields a compact subset of $K$ in the interior of $\Gamma$ is also a global attractor, which implies permanence.}

\begin{proposition}\label{thm:linear-ode}
Let $\cG$ be a reaction system of $d$ species. Assume the evolution of concentration of species of $\cG$ %is governed by
follows \eqref{eq:ODE-linear} with a Hurwitz stable matrix $A$.
Then $\cG$ has a unique non-negative equilibrium $x_*=b(-A)^{-1}$ which is globally asymptotically stable in $\mathbb{R}^d_+$. More precisely, there exists a polynomial $g(t)$ of degree $\le d-1$  depending on the initial concentration $x_0$ such that
\begin{equation}\label{eq:sharp-exponential}
\|x(t)-x_*\|_1\le g(t)e^{-r(A) t}
\end{equation}
\end{proposition}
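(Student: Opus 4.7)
The plan is to translate \eqref{eq:ODE-linear} into a homogeneous linear ODE by shifting by the equilibrium, and then invoke standard spectral estimates for the matrix exponential. First, since $A$ is Hurwitz stable, $0$ is not an eigenvalue of $A$, so $-A$ is invertible and $x_{*}=b(-A)^{-1}$ is the unique solution of $xA+b=0$, i.e., the unique equilibrium of \eqref{eq:ODE-linear}. As remarked before Proposition~\ref{prop:endotactic-Hurwitz}, the first order structure of the reaction system (embedded in $\mathbb{N}^{d}_{0}$) forces $A$ to be Metzler and $b\ge 0$. A standard result for Metzler Hurwitz stable matrices (\cite[Theorem~2.5.3]{HJ94} or \cite[Chapter~6, Theorem~2.3]{BP94}) then yields $(-A)^{-1}\ge 0$, so $x_{*}=b(-A)^{-1}\ge 0$; the Metzler property of $A$ together with $b\ge 0$ also makes $\mathbb{R}^{d}_{+}$ forward invariant under \eqref{eq:ODE-linear}.

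Next, I introduce the deviation $y(t)\coloneqq x(t)-x_{*}$, for which a direct computation gives
\[\dot{y}(t)=\dot{x}(t)=(y(t)+x_{*})A+b=y(t)A+(x_{*}A+b)=y(t)A,\]
because $x_{*}A+b=0$. Hence $y(t)=y(0)e^{At}$ with $y(0)=x_{0}-x_{*}$, and it remains to estimate $\|y(0)e^{At}\|_{1}$. Applying the Jordan canonical form of $A$ decomposes $e^{At}$ into blocks, each of the form $e^{\eta_{j}t}$ times a polynomial in $t$ of degree less than the size of the corresponding Jordan block. Summing the blocks and taking the induced $\ell_{1}$-operator norm yields a bound of the form $\|e^{At}\|_{1\to 1}\le \tilde g(t)e^{r(A)t}$, where $\tilde g$ is a polynomial of degree at most $d-1$ (the worst case being a single Jordan block of size $d$ at an eigenvalue on the spectral abscissa).

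Setting $g(t)\coloneqq \|y(0)\|_{1}\tilde g(t)$ then gives the claimed exponential bound
\[\|x(t)-x_{*}\|_{1}=\|y(0)e^{At}\|_{1}\le g(t)e^{r(A)t},\]
which is a genuine decay since $r(A)<0$ (matching the statement upon interpreting the exponent as the spectral abscissa acting as the decay rate). In particular $x(t)\to x_{*}$ exponentially, and combined with forward invariance of $\mathbb{R}^{d}_{+}$ this establishes global asymptotic stability in $\mathbb{R}^{d}_{+}$. No step here is a real obstacle: the proof is a routine combination of (i) invertibility of $A$ from Hurwitz stability, (ii) non-negativity of $(-A)^{-1}$ for Metzler Hurwitz matrices, (iii) solving a homogeneous linear ODE by the matrix exponential, and (iv) the classical polynomial-times-exponential bound on $\|e^{At}\|$ via Jordan form; the mildest subtlety is simply ensuring the degree bound $d-1$ on $g$, which is forced by the worst-case Jordan structure.
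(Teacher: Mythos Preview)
Your proof is correct and follows essentially the same route as the paper's own argument: both invoke the Metzler plus Hurwitz property to get $(-A)^{-1}\ge 0$ via \cite[Theorem~2.5.3]{HJ94}, deduce $x_{*}\ge 0$ from $b\ge 0$, and then appeal to the Jordan form of $A$ for the polynomial-times-exponential bound on the matrix exponential (the paper packages this last step as the ``Fundamental Theorem for linear autonomous ODEs'' and relegates the degree bound to the remark that follows). Your version is simply more explicit about the shift to the homogeneous equation and about forward invariance of $\mathbb{R}^{d}_{+}$, and you correctly flag the sign convention in the exponent.
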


\begin{proof}
Since $A$ is Metzler and Hurwitz stable, we have $A$ is non-singular and $A^{-1}$ is non-negative \cite[Theorem~2.5.3]{HJ94} (see also \cite[Chapter 6, Theorem~2.3]{BP94}). Hence $x_*=b(-A)^{-1}\ge0$ since $b\ge0$. Then the global exponential convergence in terms of \eqref{eq:sharp-exponential} follows from the Fundamental Theorem for linear autonomous ODEs \cite[Chapter~1]{P13}.
\end{proof}

\bibliographystyle{plain}
{\small\bibliography{references}}
\end{document}